\def\titlerunning#1{\gdef\titrun{#1}}
\def\author#1{\gdef\autrun{\def\and{\unskip, }#1}\gdef\@author{#1}}
\def\address#1{{\def\and{\\\hspace*{18pt}}\renewcommand{\thefootnote}{}%
\footnote {#1}}%
\markboth{\autrun}{\titrun}}
\def\email#1{e-mail: #1}
\def\subjclass#1{{\renewcommand{\thefootnote}{}%
\footnote{\emph{Mathematics Subject Classification (2010):} #1}}}
\def\keywords#1{\par\medskip
\noindent\textbf{Keywords.} #1}
\newtheorem{theorem}{Theorem}
\newtheorem{lemma}[theorem]{Lemma}
\newtheorem{corollary}[theorem]{Corollary}
\newtheorem{proposition}[theorem]{Proposition}
\theoremstyle{definition}
\newtheorem{definition}[theorem]{Definition}
\newcommand{\A}{\mathrm A}
\newcommand{\B}{\mathrm B}
\newcommand{\C}{\mathrm C}
\newcommand{\D}{\mathrm D}
\newcommand{\K}{\mathrm K}
\newcommand{\V}{\mathrm V}
\newcommand{\Z}{\mathrm Z}
\newcommand{\AAA}{\mathrm {A_3G}}
\newcommand{\AAG}{\mathrm {A^2G}}
\newcommand{\Line}{\mathrm L}
\newcommand{\AG}{\mathrm {AG}}
\newcommand{\HC}{\mathrm {HC}}
\newcommand{\Pet}{F_{10}}
\newcommand{\Hea}{F_{14}}
\newcommand{\Tut}{F_{30}}
\newcommand{\vC}{\vec{\C}}
\newcommand{\vGa}{\vec{\Gamma}}
\renewcommand{\wr}{\mathop{\rm wr}}
\newcommand{\Cay}{\mathrm{Cay}}
\newcommand{\Cos}{\mathrm{Cos}}
\newcommand{\Aut}{\mathrm{Aut}}
\newcommand{\Out}{\mathrm{Out}}
\newcommand{\soc}{\mathrm{soc}}
\newcommand{\Alt}{\mathrm{Alt}}
\newcommand{\Sym}{\mathrm{Sym}}
\newcommand{\SL}{\mathrm{SL}}
\newcommand{\GL}{\mathrm{GL}}
\newcommand{\PSL}{\mathrm{PSL}}
\newcommand{\PGL}{\mathrm{PGL}}
\newcommand{\PGammaL}{\mathrm{P\Gamma L}}
\newcommand{\Merge}{\mathrm {MG}}
\newcommand{\Split}{\mathrm {SG}}
\begin{document}

\baselineskip=17pt

\titlerunning{Order of the vertex-stabiliser in $4$-valent arc-transitive graphs}
\title{Bounding the order of the vertex-stabiliser in $3$-valent vertex-transitive and $4$-valent arc-transitive graphs}
\author{
Primo\v{z} Poto\v{c}nik \and
Pablo Spiga \and
Gabriel Verret
}
\date{}
\maketitle
\address{P.~Poto\v{c}nik: Institute of Mathematics, Physics, and
  Mechanics, Jadranska 19, 1000 Ljubljana, Slovenia; \email{primoz.potocnik@fmf.uni-lj.si}
\and
P.~Spiga (corresponding author):  School of Mathematics and Statistics,
The University of Western Australia,
Crawley, WA 6009, Australia; \email{spiga@maths.uwa.edu.au}
\and
G.~Verret: Institute of Mathematics, Physics, and
  Mechanics, 
Jadranska 19, 1000 Ljubljana, Slovenia;
\email{gabriel.verret@fmf.uni-lj.si}}


\subjclass{Primary 20B25; Secondary 05E18}

\begin{abstract}
The main result of this paper is that, if $\Gamma$ is a connected 4-valent $G$-arc-transitive graph and $v$ is a vertex of $\Gamma$, then either $\Gamma$ is one of a well understood infinite family of graphs, or $|G_v|\leq 2^43^6$ or $2|G_v|\log_2(|G_v|/2)\leq |\V\Gamma|$ and that this last bound is tight. As a corollary, we get a similar result for $3$-valent vertex-transitive graphs. 

\keywords{valency $3$, valency $4$, vertex-transitive, arc-transitive, locally-dihedral} 
\end{abstract}


\section{Introduction}\label{intro}
The question ``how symmetric is a certain mathematical object?" has a venerable history. In general, this question is rather vague but a natural starting point is to consider the order of the automorphism group of the object. This is especially true in the case of finite objects. Of course, larger objects have the potential to admit much larger automorphism groups hence it may be more fruitful to compare the size of the object with the order of its automorphism group. This is the point of view we adopt in this paper. The objects we consider are finite $3$-valent vertex-transitive and $4$-valent arc-transitive graphs. The main result is a striking dichotomy between a well understood family of exceptional graphs, each having a very large automorphisms group and the rest of the graphs with comparatively small automorphism groups.

We first fix some terminology and mention some background results. Throughout this paper, all graphs considered will be finite, except in Section~\ref{Amalgams}. A graph $\Gamma$ is said to be $G$-\emph{vertex-transitive} if $G$ is a subgroup of $\Aut(\Gamma)$ acting transitively on the vertex-set $\V\Gamma$ of $\Gamma$. Similarly, $\Gamma$ is said to be $G$-\emph{arc-transitive} if $G$ acts transitively on the arcs of $\Gamma$ (that is, on the ordered pairs of adjacent vertices of $\Gamma$). When $G=\Aut(\Gamma)$, the prefix $G$ in the above notation is sometimes omitted. 

A celebrated theorem of Tutte~\cite{Tutte,Tutte2} shows that, if $\Gamma$ is a connected 3-valent $G$-arc-transitive graph, then the stabiliser of a vertex in $G$ has order at most $48$. It is very natural to try to relax the hypothesis of this remarkable theorem by considering valencies greater than 3. In this vein, it can be deduced from the work of Trofimov~\cite{Trof,Trof2} and  Weiss~\cite{Wep} that, if $p$ is a prime, then there exists a constant $c_p$ depending only on $p$ such that, if $\Gamma$ is a connected $p$-valent $G$-arc-transitive graph, then the stabiliser of a vertex in $G$ has order at most $c_p$, generalising the result of Tutte. The situation is quite different when the valency is not a prime, as the next example will show.

We define a family of 4-valent graphs which we will denote $\C(r,s)$. These were studied in detail by Gardiner, Praeger and Xu~\cite{GarPra2,PraegerXu}. We give a definition which is slightly different, but equivalent to the definition used in~\cite{GarPra2}. Furthermore, as we will be mainly interested in $4$-valent graphs, we simply denote by $\C(r,s)$ the graphs denoted by $\C(2,r,s)$ in~\cite{GarPra2}. 
Let $r$ and $s$ be positive integers with $r\geq 3$ and $1\leq s\leq r-1$. Let $\C(r,1)$ be the lexicographic product $\C_r[\overline{\K_2}]$ of a cycle of length $r$ and an edgeless graph on $2$ vertices. In other words, $\V(\C(r,1))=\mathbb{Z}_r\times\mathbb{Z}_2$ with $(u,i)$ being adjacent to $(v,j)$ if and only if $|v-u|=1$. Further, for $s\ge2$, let $\C(r,s)$ be the graph with vertices being the $(s-1)$-paths of $\C(r,1)$ containing at most one vertex from $\{(y,0),(y,1)\}$ for each $y\in\mathbb{Z}_r$ and with two such $(s-1)$-paths being adjacent in $\C(r,s)$ if and only if their intersection is an $(s-2)$-path in $\C(r,1)$. Clearly, $\C(r,s)$ is a connected $4$-valent graph with $r2^s$ vertices. 

There is an obvious action of the wreath product $H=\C_2\wr \D_r$ on $\C_r[\overline{\K}_2]=\C(r,1)$ which induces an arc-transitive action on $\C(r,s)$ for $s\leq r-1$. Note that $|H|=2r2^r$ and hence the order of the stabiliser of a vertex of $\C(r,s)$ in $H$ is $2^{r-s+1}$, which is unbounded. Moreover, if we fix $s$, then  the order of the stabiliser of a vertex of $\C(r,s)$ grows exponentially with $r$ and hence exponentially  with the number of vertices of $\C(r,s)$. 

It has long been suspected that the graphs $\C(r,s)$ are rather exceptional in this respect. For example, Xu asked whether every $4$-valent $G$-arc-transitive graph with $|G_v| > 2^43^6$ is isomorphic to some $\C(r,s)$ (see \cite[Problem 17]{XU}). The answer is negative, as can be seen with a construction of Gardiner and Praeger. 

For each $s\geq 3$, they construct an infinite family of $4$-valent $G$-arc-transitive graphs $\Gamma$ (denoted by
$\C^{\pm 1}(3,s,s)$ in~\cite[Definition 2.2]{GarPra2}) with $2^{s+1}=|G_v|\leq
|V\Gamma|^{\log_3(2)}$. Another example is constructed by Conder and Walker \cite{ConderWalker}, who construct an infinite family of $4$-valent $G$-arc-transitive non-Cayley graphs $\Gamma$ such that $G\cong\Sym(n)$ for some $n$. While $|G_v|$ is unbounded in both these examples, it grows rather mildly with $|\V\Gamma|$ compared to the exponential growth exhibited by the graphs $\C(r,s)$. In fact, our main result is that, excluding the graphs $\C(r,s)$, $|G_v|$ is indeed bounded above by a sub-linear function of $|\V\Gamma|$. Before we can state Theorem~\ref{thm:main} in its full generality, we need to define the following very important concept.

\begin{definition} \label{def:locally}

Let $P$ be a permutation group, let $\Gamma$ be a connected $G$-vertex-transitive graph and let $v$ be a vertex of $\Gamma$. We denote by $G_v^{\Gamma(v)}$ the permutation group induced by the stabiliser $G_v$ of the vertex $v\in \V\Gamma$ on the neighbourhood $\Gamma(v)$. If $G_v^{\Gamma(v)}$ is permutation isomorphic to $P$, then we say that $(\Gamma,G)$ is \emph{locally-$P$}. 
\end{definition}

If $\Gamma$ has valency $k$, then the permutation group $G_v^{\Gamma(v)}$ has degree $k$ and, up to permutation isomorphism, does not depend on the choice of $v$. It is an elementary observation that $G_v^{\Gamma(v)}$ is transitive if and only if $\Gamma$ is $G$-arc-transitive and it is regular if and only if  $\Gamma$ is $G$-arc-regular, in which case $|G_v|=k$.

Let $\Gamma$ be a connected 4-valent $G$-arc-transitive graph and let $v\in \V\Gamma$. It follows from the work of Gardiner~\cite{Gardiner} that, if $G_v^{\Gamma(v)}$ is $2$-transitive, then $|G_v|\leq 2^43^6$. Up to permutation isomorphism, there is only one transitive permutation group of degree $4$ that is neither regular nor $2$-transitive, namely $\D_4$, the dihedral group of order $8$ in its action on $4$ points. By the elementary observation above together with the work of Gardiner, we obtain that if $|G_v|> 2^43^6$, then $(\Gamma,G)$ is locally-$\D_4$. This  shows that the hypothesis of our main result is not restrictive.

\begin{theorem}
\label{thm:main}
Let $(\Gamma,G)$ be locally-$\D_4$. Then one of the following holds:
\begin{description}
\item[$(A)$] $\Gamma\cong \C(r,s)$ for some $r\geq 3$, $1\leq s\leq \frac{r}{2}$;
\item[$(B)$] $(\Gamma,G)$ is one of the pairs in Table~\ref{tb:soluble} or Table~\ref{tb:nsoluble};
\item[$(C)$]  $|\V\Gamma| \ge 2|G_v|\log_2(|G_v|/2)$.
\end{description}
Moreover, if~$(C)$ holds with equality and $\Gamma$ is not as in~$(A)$, then $(\Gamma,G)$ is one of $(\Gamma_t^+,G_t^+)$ or $(\Gamma_t^-,G_t^-)$ for some $t\geq 2$.
\end{theorem}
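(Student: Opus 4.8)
The plan is to exploit the imprimitivity of the local group. Since $\D_4$ acting on $4$ points has a unique block system consisting of two blocks of size $2$, the neighbourhood $\Gamma(v)$ carries a $G_v$-invariant partition into two pairs, and by vertex-transitivity these local pairings fit together into a single $G$-invariant structure on $\Gamma$. First I would record the consequences of this pairing: it is precisely the structure underlying the Gardiner--Praeger--Xu graphs, and it lets one pass between $\Gamma$ and auxiliary graphs of controlled valency, the merge and split graphs $\Merge(\Gamma)$ and $\Split(\Gamma)$. I would use this correspondence to set up an inductive framework, reducing a locally-$\D_4$ pair either to a strictly smaller such pair or to a directly recognisable base case. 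In parallel, the pairing defines a $G$-invariant family of \emph{alternating cycles} (follow an edge through a vertex by leaving along its partner), each vertex lying on exactly two of them; their common length will be the key combinatorial parameter separating the cases.

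The second ingredient is the stabiliser chain. Writing $G_v^{[i]}$ for the pointwise stabiliser in $G$ of the ball of radius $i$ about $v$, I would first establish that in the locally-$\D_4$ setting $G_v$ is a $2$-group and that the successive indices $|G_v^{[i]} : G_v^{[i+1]}|$ are small, at most $2$ for $i \ge 1$. Since $|G_v : G_v^{[1]}| = |\D_4| = 8$, this forces $|G_v|$ to be of order $2^{s}$, where $s$ is the depth at which the chain becomes trivial, so that $s$ grows like $\log_2 |G_v|$; this is the source of the logarithm in $(C)$. The delicate direction is the converse: one must show that a stabiliser chain of depth $s$ genuinely spreads out in the graph, i.e.\ that the local $2$-group structure (acting essentially as the automorphism group of a binary tree of depth $s$) embeds faithfully enough to force on the order of $2|G_v|\cdot s$ distinct vertices, \emph{unless} the spreading wraps around a short alternating cycle, which is exactly the degenerate behaviour producing the family $\C(r,s)$.

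With these tools the trichotomy would be assembled as follows. If the alternating cycles are short and $G_v$ acts on them in the wrapped-up fashion, I would identify $\Gamma$ with some $\C(r,s)$, $1 \le s \le r/2$, giving case $(A)$. Otherwise the counting above yields $|\V\Gamma| \ge 2|G_v|\log_2(|G_v|/2)$, which is case $(C)$; the finitely many configurations where neither the clean counting nor the $\C(r,s)$ recognition applies (small depth, or sporadic local amalgams) would be settled by a finite computation and collected into Tables~\ref{tb:soluble} and~\ref{tb:nsoluble}, giving case $(B)$. For the equality statement I would revisit the counting and track when every inequality is tight: tightness pins down the length of the alternating cycles, the precise action of $G_v$ on them, and the way the $2$-group closes up, which I expect to single out exactly the two explicit families $(\Gamma_t^{+}, G_t^{+})$ and $(\Gamma_t^{-}, G_t^{-})$; checking that these attain equality is then a direct computation in the constructed examples.

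The step I expect to be the main obstacle is the tight counting of the second paragraph: converting the algebraic depth of the stabiliser chain into a sharp lower bound on $|\V\Gamma|$ with the exact constant $2$ and the correct logarithmic term, while cleanly separating the genuinely spread-out graphs from the wrapped-up graphs $\C(r,s)$. Obtaining the constant sharply, rather than merely up to order of magnitude, is what makes the equality classification possible, and it is also what forces the careful case division populating the exceptional Tables.
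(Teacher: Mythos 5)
Your central quantitative step fails. The claim that the pointwise ball stabilisers satisfy $|G_v^{[i]}:G_v^{[i+1]}|\le 2$ for $i\ge 1$ is false: in $\C(r,1)$ with $G=\Aut(\C(r,1))=\C_2^r\rtimes\D_r$ and $r$ large, the stabiliser $G_v^{[i]}$ is generated by the coordinate involutions at positions at distance greater than $i+1$ from $v$, and the successive indices equal $4$, not $2$. You might reply that $\C(r,s)$ is exactly the degenerate family your argument is meant to excise, but the inequality your counting is supposed to produce is also violated by the pairs of Table~\ref{tb:nsoluble}: for $\AG^2(\Tut)$ one has $|\V\Gamma|=8100<8192=2|G_v|\log_2(|G_v|/2)$ with $|G_v|=2^9$, and for $\Line(F_6)$ one has $9<32$. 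So any spreading lemma of the proposed shape must already contain a classification of these exceptional configurations, and your only mechanism for producing it --- ``a finite computation'' over the residual cases --- is not available a priori: by Djokovi\'c~\cite{D} the faithful amalgams of local type $\D_4$ form an infinite family ($N_L(B)$ is merely nilpotent of class at most $2$, of unbounded order), so until $|G_v|$ has been bounded in the non-$\C(r,s)$, non-spreading cases there is no finite list to compute over. Nothing in the proposal supplies such a bound, and your criterion for degeneracy (``wrapping around a short alternating cycle'') is not established to be equivalent to membership in the Praeger--Xu family.

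This is exactly where the paper takes a different, and apparently unavoidable, route: induction on the normal quotient $\Gamma/N$ for a minimal normal subgroup $N$ of $G$. When $N$ is abelian, the non-semiregular situation is absorbed by Praeger--Xu (Corollary~\ref{RealCorollary}), and the sharp constant together with the equality pairs $(\Gamma_t^\pm,G_t^\pm)$ emerges from Theorem~\ref{WreathCover}: there a chain $E_i=\langle x_0,\ldots,x_{i-1}\rangle$ playing the role of your stabiliser chain lives inside a normal $2$-subgroup $E$ lifted from a quotient isomorphic to some $\C(r,s)$; its index-$2$ growth is \emph{proved} in that setting, not assumed, and the closing-up analysis produces the extraspecial group $E_t$ defining $\Gamma_t^\pm$. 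When $N$ is non-abelian, the bound comes from the Classification via Theorem~\ref{theo:key}, a $2$-rank estimate in simple groups, which is what reduces the exceptional configurations to the finite \texttt{Magma} checks of Corollary~\ref{prop:end} and Lemmas~\ref{thm:exp2} and~\ref{thm:exp}. The authors' remark that without the Classification they can only prove $|G_v|\le 2|\V\Gamma|^3$ (Theorem~\ref{thm:weakmain}) is strong evidence that a purely local ball-growth argument of your kind cannot reach the sharp logarithmic bound: the almost simple configurations are a global obstruction invisible to neighbourhood counting. (Your $\Merge$/$\Split$ observation is sound, but in the paper it serves only to derive Corollary~\ref{Cubic} in Section~\ref{ss:cubic}, not the main theorem.)
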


Table~\ref{tb:soluble} and Table~\ref{tb:nsoluble} as well as the definition of the pairs $(\Gamma_t^+,G_t^+)$ and $(\Gamma_t^-,G_t^-)$ can be found in Section~\ref{subsec}. If $(\Gamma,G)$ is one of the pairs in  Table~\ref{tb:soluble} or Table~\ref{tb:nsoluble}, then $|G_v|\leq 512<2^43^6$. Hence, Theorem~\ref{thm:main} together with the work of Gardiner has the following corollary.

\begin{corollary}
\label{cor:main}
Let $\Gamma$ be a connected $4$-valent $G$-arc-transitive graph. Then one of the following holds:
\begin{description}
\item[$(A)$] $\Gamma\cong \C(r,s)$ for some $r\geq 3$, $1\leq s\leq \frac{r}{2}$;
\item[$(B)$] $|G_v|\leq 2^43^6$;
\item[$(C)$]  $|\V\Gamma| \ge 2|G_v|\log_2(|G_v|/2)$.
\end{description}
Moreover, if~$(C)$ holds with equality and $\Gamma$ is not as in~$(A)$, then $(\Gamma,G)$ is one of $(\Gamma_t^+,G_t^+)$ or $(\Gamma_t^-,G_t^-)$ for some $t\geq 2$.
\end{corollary}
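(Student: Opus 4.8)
The plan is to reduce the problem to the study of a naturally associated cubic graph and then to bound the vertex-stabiliser through a careful analysis of the stabiliser filtration. Since $(\Gamma,G)$ is locally-$\D_4$, the four neighbours of each vertex $v$ carry a $G_v$-invariant partition into two blocks of size $2$ (the imprimitivity blocks of $\D_4$ acting on $4$ points), and $|G_v|=|G_v^{\Gamma(v)}|\,|G_v^{[1]}|=8\,|G_v^{[1]}|$, so everything reduces to controlling the kernel $G_v^{[1]}$. First I would split each vertex $v$ into two vertices $v^{B_1},v^{B_2}$, one for each block, joined by a new edge, with $v^{B_i}$ inheriting the two edges lying in $B_i$; since $\D_4$ is transitive on its two blocks, $G$ acts vertex-transitively on the resulting cubic graph $\Split(\Gamma)$, which carries a $G$-invariant perfect matching (the new edges) whose complement is a disjoint union of $G$-invariant cycles, the inverse being the merge operation $\Merge$. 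One checks that $|\V(\Split(\Gamma))|=2|\V\Gamma|$ and that the stabiliser of $v^{B_1}$ has order $|G_v|/2$, so the desired bound on $|G_v|$ translates into a bound of the form $|\V(\Split(\Gamma))|\geq 8\,|G_w|\log_2|G_w|$ for the cubic graph, with $w=v^{B_1}$, which is where the main work lies.

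The structural dichotomy would come from the cycle decomposition above. The group $G$ permutes the cycles of the $2$-factor, and $G_w$ acts on the cycle through $w$ as a reflection fixing $w$. If these cycles are short and close up tightly---equivalently, if the matching identifies vertices so as to make the quotient a single cycle with fibres of bounded size---then I would recognise $\Gamma$ as a member of the Gardiner--Praeger--Xu family and show $\Gamma\cong\C(r,s)$ with $1\le s\le r/2$, giving case $(A)$; here the isomorphism $\C(r,s)\cong\C(r,r-s)$ accounts for the restriction $s\le r/2$. In the complementary case the cycles and the matching interact so that the filtration $G_w\ge G_w^{(1)}\ge G_w^{(2)}\ge\cdots$ of pointwise stabilisers of balls of increasing radius descends genuinely, and this is the regime in which the quantitative bound $(C)$ is forced.

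For the quantitative step, the local action on $\Split(\Gamma)$ being the tiny group $C_2$ forces each successive quotient $G_w^{(i)}/G_w^{(i+1)}$ to be an elementary abelian $2$-group of bounded rank; iterating gives $|G_w|\le 2^{\rho}$, where $\rho$ is the depth at which the filtration becomes trivial, so $\rho\ge\log_2|G_w|$. I would then exhibit, for each level $i\le\rho$, a shell of vertices at distance $i$ from $w$ on which $G_w$ still acts with orbits of size comparable to $|G_w|$; summing the sizes of these shells yields the required lower bound on $|\V(\Split(\Gamma))|$, and hence inequality $(C)$ after transferring back to $\Gamma$. Tightness of $(C)$ would be analysed by tracing the equality conditions through this count: equality can hold only when every shell has exactly the minimal possible size and the filtration drops by exactly one factor of $2$ per level, which rigidifies the amalgam completely and leaves precisely the two families $(\Gamma_t^{+},G_t^{+})$ and $(\Gamma_t^{-},G_t^{-})$.

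Finally, the finitely many sporadic pairs of case $(B)$ would be pinned down by classifying, via the amalgam method, the finitely many possibilities for the pair $(G_v,G_{uv})$ that remain when neither the tight wrapping of case $(A)$ nor the genuine descent of case $(C)$ occurs; this is a bounded computation since $|G_v|\le 2^4 3^6$ throughout case $(B)$, by the work of Gardiner together with the reduction to local action $\D_4$. I expect the main obstacle to be exactly this control of the stabiliser filtration on the cubic side: proving that outside the Gardiner--Praeger--Xu family the kernel $G_w^{(1)}$ cannot stabilise long configurations without forcing the shells to be as large as claimed. Everything else---the split/merge correspondence, the recognition of $\C(r,s)$, and the final equality analysis---is structural bookkeeping once this key propagation lemma for the $G_w^{(i)}$ is in hand.
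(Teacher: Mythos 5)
Your reduction to cubic graphs via $\Split$ is sound and matches the paper's Section~7.1, but it is used there in the opposite direction (to deduce the cubic Corollary from the $4$-valent theorem), and the real content of your proposal --- the ``propagation lemma'' for the ball-stabiliser filtration --- has a concrete quantitative flaw that makes the plan unworkable as stated. In a cubic graph the sphere of radius $i$ about $w$ has at most $3\cdot 2^{i-1}$ vertices, so $G_w$ cannot act on it with orbits ``of size comparable to $|G_w|$'' for any $i<\log_2|G_w|$; and if the filtration $G_w\ge G_w^{(1)}\ge\cdots$ drops by a bounded factor per level, it trivialises at depth $\rho\approx\log_2|G_w|$, at which point the \emph{entire ball} contains only $O(2^\rho)=O(|G_w|)$ vertices. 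Summing your shells therefore yields a bound of the form $|\V\Gamma|\ge c\,|G_w|$, which misses the target $2|G_v|\log_2(|G_v|/2)$ by exactly the logarithmic factor. The extremal pairs $(\Gamma_t^\pm,G_t^\pm)$ show where that factor really comes from: there $|G_v|=2^{t+1}$ and $|\V\Gamma|=2t\cdot 2^{t+1}$, and the count arises from a global cylinder decomposition $|\V\Gamma|=m\,|v^E|$ with a cyclic quotient of length $m\ge 2t$ and fibres of size $|v^E|\ge 2^t$ --- a ball of radius $\rho$ sees only an $O(1/\log)$ fraction of the graph. Proving $m\ge 2t$ (the graph is ``long enough around the cycle'') is the paper's Claim~3 inside Theorem~\ref{WreathCover}, and it is a genuinely global argument (the non-generator trick of Lemma~\ref{Pablolemma2} applied to $a^m\in E$, plus a centre argument forcing $e\ge t$), available only after the normal-quotient reduction has produced the cyclic quotient; no purely local analysis of $G_w^{(i)}$ supplies it.

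Two further gaps compound this. First, your case~$(B)$ is circular: within the locally-$\D_4$ regime the bound $|G_v|\le 2^43^6$ is a \emph{conclusion} of the theorem (Gardiner's bound applies only when $G_v^{\Gamma(v)}$ is $2$-transitive), so you cannot invoke it to make the amalgam enumeration ``a bounded computation''; moreover Djokovi\'c's list of faithful amalgams of local type $\D_4$ is infinite (the graphs $\C(r,s)$ alone realise unboundedly many), so bounding $|G_v|$ off the exceptional families is precisely the hard part, not bookkeeping. Second, the paper's actual route --- induction on $|\V\Gamma|$ via a minimal normal subgroup $N$, with the Praeger--Xu classification (Corollary~\ref{RealCorollary}) recognising case~$(A)$ through a non-semiregular abelian normal subgroup (not through cycle lengths of the $2$-factor, as you propose), and with the non-abelian case handled by the CFSG-dependent $2$-rank estimates of Theorem~\ref{theo:key} --- has no analogue in your sketch. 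The paper itself remarks that CFSG-free methods of the kind you envisage yield only the much weaker Theorem~\ref{thm:weakmain}, namely $|G_v|\le 2|\V\Gamma|^3$, which is a realistic ceiling for a local filtration argument and far from the sharp bound with its equality characterisation.
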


For each locally-$\D_4$ pair $(\Gamma,G)$, there is a natural way to construct a $3$-valent $G$-vertex-transitive graph $\Split(\Gamma)$ with $|\V(\Split(\Gamma))|=2|\V\Gamma|$. In some appropriate sense, this construction is reversible. More details can be found in Section~\ref{ss:cubic}, where we prove the following:

\begin{corollary}\label{Cubic}
Let $\Gamma$ be a connected $3$-valent $G$-vertex-transitive graph. Then one of the following holds:
\begin{description}
\item[$(A)$] $\Gamma=\Split(\Gamma')$ where either $\Gamma'$ appears in Table~\ref{tb:soluble} or in Table~\ref{tb:nsoluble} or $\Gamma'\cong \C(r,s)$ for some $r\geq 3$, $1\leq s\leq \frac{r}{2}$;
\item[$(B)$] $\Gamma$ is $G$-arc-transitive and $|G_v|\leq 48$;
\item[$(C)$]  $|\V\Gamma|\geq 8|G_v|\log_2|G_v|$.
\end{description}
\end{corollary}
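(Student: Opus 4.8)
The plan is to analyse the local action $G_v^{\Gamma(v)}\le\Sym(3)$ and to reduce the one genuinely interesting case to Theorem~\ref{thm:main} via the $\Split$/$\Merge$ correspondence. Since $\Sym(3)$ has exactly two transitive subgroups (the cyclic group of order $3$ and $\Sym(3)$ itself) and, among its intransitive subgroups, only the trivial group and those generated by a single transposition, there are precisely three cases. If $G_v^{\Gamma(v)}$ is transitive, then $\Gamma$ is $G$-arc-transitive and Tutte's theorem gives $|G_v|\le 48$, which is conclusion~$(B)$. If $G_v^{\Gamma(v)}$ is trivial, then $G_v$ fixes every neighbour of $v$, so $G_v=1$ by connectivity; since $\log_2 1=0$, conclusion~$(C)$ holds vacuously. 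This leaves the case in which $G_v$ fixes one neighbour $w$ of $v$ and interchanges the other two, on which I would concentrate.

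In that case I would first exhibit a canonical $G$-invariant perfect matching. Because $G_v$ fixes $w$ we have $G_v\le G_w$, and vertex-transitivity gives $|G_v|=|G_w|$, whence $G_v=G_w$; thus $G_w$ fixes $v$, and as $v\in\Gamma(w)$ it must be the unique neighbour of $w$ fixed by $G_w$. The assignment $v\mapsto w$ is therefore a fixed-point-free involution commuting with the $G$-action, and its graph is a $G$-invariant perfect matching $M$. Contracting $M$ (the operation $\Merge$ of Section~\ref{ss:cubic}, inverse to $\Split$) produces a connected $4$-valent pair $(\Gamma',G)$ with $\Gamma=\Split(\Gamma')$, $|\V\Gamma|=2|\V\Gamma'|$ and $|G_v^{\Gamma'}|=2|G_v|$. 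A key point is that each matched edge is \emph{flippable}: if $g\in G$ sends $v$ to $w$, then $M$ being $G$-invariant and $\{v,w\}$ being the unique matching edge at $w$ force $g(w)=v$. The flip provided by such a $g$ interchanges the two blocks of neighbours created by the matching, while $G_v=G_w$ supplies the two within-block transpositions; together these are meant to generate the dihedral group, so that $(\Gamma',G)$ is locally-$\D_4$.

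Granting that $(\Gamma',G)$ is locally-$\D_4$, it remains to feed it into Theorem~\ref{thm:main} and translate the trichotomy back along $\Gamma=\Split(\Gamma')$, using $|\V\Gamma|=2|\V\Gamma'|$ and $m:=|G_v^{\Gamma'}|=2|G_v|$. If alternative~$(A)$ or~$(B)$ of Theorem~\ref{thm:main} holds, then $\Gamma'$ is isomorphic to some $\C(r,s)$ or appears in Table~\ref{tb:soluble} or Table~\ref{tb:nsoluble}, and in either event $\Gamma=\Split(\Gamma')$ is exactly conclusion~$(A)$. If alternative~$(C)$ holds, then $|\V\Gamma'|\ge 2m\log_2(m/2)=4|G_v|\log_2|G_v|$, whence $|\V\Gamma|=2|\V\Gamma'|\ge 8|G_v|\log_2|G_v|$, which is conclusion~$(C)$.

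I expect the main obstacle to be not this arithmetic but the \emph{appropriate} sense in which $\Split$ is reversible, that is, the construction of the previous paragraph. The delicate point is to guarantee that $\Merge$ genuinely returns a locally-$\D_4$ pair and to pin down the exact boundary of the correspondence, since a priori the contraction of $M$ could instead yield a $4$-valent pair whose induced local group is a regular or otherwise non-dihedral subgroup of $\Sym(4)$: this already occurs for prism graphs, where the two within-block transpositions coincide. The heart of Section~\ref{ss:cubic} is thus to show that the $3$-valent graphs with local action $\mathbb{Z}_2$ correspond, away from such understood degenerate configurations, precisely to the $4$-valent locally-$\D_4$ graphs, and that the degenerate graphs themselves are directly accounted for by conclusions~$(A)$ and~$(C)$; once this structural dictionary is in place, the corollary follows from the case division and the computation above.
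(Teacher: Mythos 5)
Your proposal follows the paper's own route exactly: the trichotomy on $G_v^{\Gamma(v)}\le\Sym(3)$ (transitive gives $(B)$ via Tutte, trivial gives $G_v=1$ and a vacuous $(C)$), the canonical $G$-invariant matching obtained from the unique fixed neighbour (your argument $G_v\le G_w$, $|G_v|=|G_w|$, hence $G_v=G_w$ is precisely Definition~\ref{def:a}), the passage to $(\Merge(\Gamma),G)$ with half as many vertices and stabiliser of order $2|G_v|$, and the application of Theorem~\ref{thm:main} followed by the arithmetic $|\V\Gamma|=2|\V\Gamma'|\ge 2\cdot 2m\log_2(m/2)=8|G_v|\log_2|G_v|$ with $m=2|G_v|$. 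This is exactly how the paper combines Lemmas~\ref{l:1} and~\ref{l:2} with Theorem~\ref{thm:main}, and this part of your argument is correct.

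The genuine gap is in your last paragraph, and it is not merely an omitted verification: the statement you defer to, namely that ``the degenerate graphs themselves are directly accounted for by conclusions~$(A)$ and~$(C)$'', is false as stated, and your own example defeats it. Take the triangular prism $\Gamma=\C_3\Box\K_2$ with $G=\Aut(\Gamma)\cong\Sym(3)\times\C_2$: this is a locally-$\C_2^{[3]}$ pair with $|G_v|=2$ and $|\V\Gamma|=6$; the quotient by the matching is a triangle (the four neighbour-classes of a matched pair collapse to two, so the valency drops and $G$ does not even act faithfully, a failure mode different from the ``wrong local group'' one you describe); $\Gamma$ is not of the form $\Split(\Gamma')$ for any $4$-valent $\Gamma'$, since every such graph has $2|\V\Gamma'|\ge 12$ vertices; $\Gamma$ is not $G$-arc-transitive; and $8|G_v|\log_2|G_v|=16>6$. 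So no structural dictionary can push the degenerate configurations into $(A)$ or $(C)$. What is actually true, and what the paper proves in Lemma~\ref{l:1} under the hypothesis $|G_v|\ge 4$, is that degeneracy \emph{forces} $|G_v|=2$: a $3$-cycle $(u,v,v')$ or a $4$-cycle $(u,u',v',v)$ through matching edges makes all the local kernels $K(w)$ coincide, hence trivial by connectedness, so $|G_v|=2$. Under $|G_v|\ge4$ the quotient is therefore genuinely $4$-valent and the action faithful, and locally-$\D_4$-ness then follows not from your ``flip plus two within-block transpositions'' heuristic (the transpositions supplied by $G_v=G_w$ may a priori only generate a diagonal $\C_2$ on the four neighbours, as you yourself observe) but from an order count: the local group preserves the pairing of the four neighbours into two blocks, hence lies in $\D_4$, and it cannot be regular because $|G_{\{v,v'\}}|=2|G_v|\ge 8$ while arc-regularity would force order $4$. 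A complete write-up thus needs (i) this non-degeneracy lemma with the explicit hypothesis $|G_v|\ge4$ in place of your unproved dictionary, and (ii) a separate, explicit disposition of the residual subcase $|G_v|=2$, which the prism shows is exactly the boundary of the statement and cannot be absorbed into $(A)$ or $(C)$ by the bound as displayed; note that the paper's one-sentence derivation likewise rests on Lemma~\ref{l:1} and leaves only this subcase to be read off directly.
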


\subsection{Structure of the paper and sketch of the proof of Theorem~\ref{thm:main}}\label{Struc} 
Let $(\Gamma,G)$ be locally-$\D_4$. 
We prove Theorem~\ref{thm:main} by considering the action of a minimal normal subgroup $N$ of $G$ on $\V\Gamma$. The {\em quotient graph} $\Gamma/N$ is the graph whose vertices are the $N$-orbits on $\V\Gamma$ with  two such $N$-orbits $v^N$ and $u^N$ adjacent whenever there is a pair of vertices $v'\in v^N$ and $u'\in u^N$ that are adjacent in $\Gamma$. Observe that $G/N$ acts on $\Gamma/N$ arc-transitively, and that the valency of $\Gamma/N$ is either $0$ (when $N$ is transitive on $\V\Gamma$), $1$ (when $N$ has $2$ orbits on $\V\Gamma$), $2$ (when $\Gamma/N$ is a cycle) or $4$. In the latter case, $G/N$ acts faithfully on $\V\Gamma$ and hence $(\Gamma/N,G/N)$ is locally-$\D_4$ with the vertex-stabiliser $(G/N)_{v^N}=G_vN/N$ in $G/N$ isomorphic to $G_v$. Therefore, this will allow the use of an inductive argument when $\Gamma/N$ has valency $4$.

In Section~\ref{sec:abelian} we study the case when $N$ is abelian. Namely, in Section~\ref{sec:basic}, we consider the case when $\Gamma/N$ has valency at most $2$. Next, if $\Gamma/N$ has valency $4$, then by induction we may assume that
$\Gamma/N$ is one of the graphs in $(A)$ or $(B)$ of Theorem~\ref{thm:main}. The case when $\Gamma/N$ is as in~$(A)$ is dealt with in Section~\ref{sec:nc}. Finally, the case when $\Gamma/N$ is as in~$(B)$  requires a few computations which are carried out in the proof of Lemma~\ref{thm:exp2}.

In Section~\ref{eaAS} we study the case when $N$ is non-abelian. The main ingredient in this section is  a result on the order of elementary abelian subgroups in simple groups (Theorem~\ref{theo:key}). The proof of Theorem~\ref{theo:key} is very technical, uses the Classification of Finite Simple Groups and is delayed until Section~\ref{TechnicalProof}.

The proof of Theorem~\ref{thm:main} is in Section~\ref{MainProof} and consists in collecting all the preceding partial results.

Section~\ref{OtherStuff} consists of applications of our main result and additional remarks. In Section~\ref{ss:cubic}, we show that the problem of bounding the order of the vertex-stabiliser of a $3$-valent vertex-transitive graph is equivalent to the problem of bounding it for $4$-valent arc-transitive graphs and prove Corollary~\ref{Cubic}. Finally, in Section~\ref{Amalgams}, we explain how to rephrase our results in a purely group theoretical language and how they can be interpreted as bounds on the indices of some normal subgroups in some infinite groups.

\textbf{Remark:} Part of the proof of Theorem~\ref{thm:main} relies on  the Classification of Finite Simple Groups. Using methods similar to the techniques developed in~\cite{VSINHAT}, it is possible to prove  (without using the Classification of Finite Simple Groups) the following much weaker version of Theorem~\ref{thm:main}.

\begin{theorem}
\label{thm:weakmain}
Let $(\Gamma,G)$ be locally-$\D_4$. Then either $|G_v|\leq2 |\V\Gamma|^3$ or $\Gamma\cong \C(r,s)$ for some $1\leq s\leq r-1$.
\end{theorem}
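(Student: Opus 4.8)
The plan is to reduce everything to bounding the order of the pointwise stabiliser $G_v^{[1]}$ of the neighbourhood $\Gamma(v)$ and then to run an induction on $|\V\Gamma|$ that mirrors the scheme announced for Theorem~\ref{thm:main}, but with its delicate quantitative input replaced by crude polynomial estimates. Since $G_v^{\Gamma(v)}\cong\D_4$ has order $8$, we have $|G_v|=8\,|G_v^{[1]}|$, so it suffices to show that either $\Gamma\cong\C(r,s)$ or $|G_v^{[1]}|\leq\frac14|\V\Gamma|^3$. The first step is to extract the combinatorial skeleton forced by the local group: $\D_4$ preserves a partition of the four neighbours of $v$ into two opposite pairs, and the stabiliser in $\D_4$ of a point fixes that point's partner while interchanging the remaining two. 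Globally this yields a $G$-invariant decomposition of the edge-set into closed ``lines'' (at each vertex one continues along the partner of the incoming edge), with every vertex lying on exactly two lines. A short induction along a line, using only this point-stabiliser structure, shows that $G_v^{[1]}$ fixes pointwise both lines through $v$ and can act on a transversal line only by reflecting it through its foot. This reflection action is the structure the whole argument exploits.

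Next I would set up the induction by choosing a minimal normal subgroup $N$ of $G$ and passing to $\Gamma/N$, whose valency is $0$, $1$, $2$ or $4$. When $\Gamma/N$ has valency $4$, the excerpt tells us $(\Gamma/N,G/N)$ is again locally-$\D_4$ with $(G/N)_{v^N}\cong G_v$ and strictly fewer vertices; the inductive hypothesis then gives either $\Gamma/N\cong\C(r,s)$, from which one reconstructs $\Gamma$ and checks it is again of this form (or satisfies the bound), or the inequality $|(G/N)_{v^N}|\leq 2|\V(\Gamma/N)|^3$, whence $|G_v|\leq 2|\V\Gamma|^3$ follows a fortiori because the vertex-stabiliser is unchanged and the number of vertices only dropped. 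Thus the genuine work lies in the base cases, where every minimal normal subgroup produces a quotient of valency at most $2$, i.e.\ $N$ is transitive, has two orbits, or makes $\Gamma/N$ a cycle. In the last situation with $N$ abelian one is exactly in the configuration characterised by Praeger and Xu~\cite{PraegerXu}: a normal abelian subgroup with a cyclic quotient-graph, which matches the definition of $\C(r,s)=\C(2,r,s)$ and identifies $\Gamma$ as one of these graphs. Here the large elementary-abelian $2$-group built from the transversal reflections of the previous paragraph is what assembles into $N$.

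The step I expect to be the main obstacle is the remaining base case, where no abelian normal subgroup does the job and $N$ is non-abelian; this is precisely where the proof of Theorem~\ref{thm:main} invokes the Classification through Theorem~\ref{theo:key}. For the present weaker statement the Classification must be avoided, so the sharp count of elementary abelian subgroups has to be replaced by an elementary, and much lossier, bound on how large $G_v$ can be relative to the orbit structure of $N$, in the spirit of the techniques of~\cite{VSINHAT}; controlling this without structural information on the simple composition factors is the technical heart, and it is what degrades the near-linear bound of Theorem~\ref{thm:main} to the cubic bound here. Once $G_v^{[1]}$ is known to be small outside the $\C(r,s)$ case, the explicit inequality is delivered by a counting argument: one exhibits vertices $s_1,s_2,s_3$ whose pointwise stabiliser in $G_v$ is trivial, so that $g\mapsto(gs_1,gs_2,gs_3)$ embeds $G_v$ into ordered triples of vertices and yields $|G_v|\leq|\V\Gamma|^3\leq 2|\V\Gamma|^3$. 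The bookkeeping for the valencies $0$ and $1$ quotients and the small sporadic configurations is routine and is where the harmless factor $2$ is absorbed.
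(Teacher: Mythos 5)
The first thing to note is that the paper itself contains no proof of Theorem~\ref{thm:weakmain}: it is stated inside a remark with only a pointer to the techniques of \cite{VSINHAT}, so your proposal has to stand on its own merits, and as it stands it is an outline of the CFSG-based proof of Theorem~\ref{thm:main} with the two genuinely hard steps left open. First, you explicitly defer the case of a non-abelian minimal normal subgroup to ``an elementary, much lossier bound in the spirit of \cite{VSINHAT}'', but no such bound is formulated, let alone proved; in the paper this is exactly where the Classification enters (Theorem~\ref{theo:key}, the comparison of $l_oo^l$ with $6le^{3l/2}\log_2(e)$ via bounds on elementary abelian $2$-subgroups of almost simple groups), and it is not clear that any estimate on $|G_v|$ in terms of the orbit structure of $N$ can replace it --- the techniques of \cite{VSINHAT} are growth arguments along arcs of $2$-valent arc-transitive digraphs, which would supplant your whole minimal-normal-subgroup induction rather than slot into this one step. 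Second, the inductive branch $\Gamma/N\cong\C(r,s)$ cannot be settled by ``one reconstructs $\Gamma$ and checks'': in this regime the vertex-stabiliser of the quotient can have order up to $2^{r-s+1}$ while $|\V(\Gamma/N)|=r2^s$, so for small $s$ the cubic bound for the quotient carries no information and nothing follows a fortiori; lifting the structure through $N$ is precisely the content of Lemma~\ref{lemma:againC} together with the long normal-closure and centre-of-$E$ analysis in the proof of Theorem~\ref{WreathCover}, and even the weaker cubic conclusion requires a quantitative version of that analysis (one must show either that $\Gamma$ is again some $\C(r',s')$ or that the cycle length and fibre sizes grow with $t$, where $|G_v|=2^{t+1}$).

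There are two further defects. Your abelian base case misquotes Praeger--Xu: Theorem~$1$ of \cite{PraegerXu} (Theorem~\ref{PXu} here) requires an abelian normal subgroup that is \emph{not semiregular} on the vertices; a semiregular abelian $N$ with $\Gamma/N$ a cycle does not identify $\Gamma$ as a $\C(r,s)$, since Theorem~\ref{GarPraCycle} has a second branch ($p$ odd, $|K_v|$ dividing $2^s$) containing, for instance, the Gardiner--Praeger graphs $\C^{\pm 1}(3,s,s)$ of \cite{GarPra2}, which are not isomorphic to any $\C(r,s)$. (For these the cubic bound follows easily from $|G_v|\leq 2^{s+1}$ and $|\V\Gamma|\geq 3^{s+1}$, but the check must be made; ``matches the definition of $\C(r,s)$'' is false as written.) Finally, your closing counting step is circular: a triple of vertices whose pointwise stabiliser in $G_v$ is trivial does yield $|G_v|\leq|\V\Gamma|^3$, but producing such a base of size $3$ outside the $\C(r,s)$ family is essentially equivalent to the theorem, and your line/reflection lemma from the first paragraph --- which is correct: $G_v^{[1]}$ fixes the two lines through $v$ pointwise and acts on any other line either trivially or as the reflection in its foot, and this is indeed the structure behind the Praeger--Xu graphs --- is never actually converted into such a base. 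In sum, the structural first paragraph is sound and useful, but the three load-bearing steps (non-abelian $N$, the $\C(r,s)$-quotient lift, and the existence of a base of size $3$) are respectively missing, missing, and circular, so this is a plan rather than a proof.
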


\section{Exceptions in Theorem~\ref{thm:main}}\label{subsec}
In this section, we describe the exceptional graphs in Theorem~\ref{thm:main} and we state some preliminary results regarding these families that are needed in the rest of the paper. The graphs $\C(r,s)$ were
introduced in Section~\ref{intro}.

\subsection{The graphs $\Gamma_t^+$ and $\Gamma_t^-$. }
\label{ss:Gamma}

In this section, we describe the pairs $(\Gamma_t^+,G_t^+)$ and $(\Gamma_t^-,G_t^-)$ mentioned in Theorem~\ref{thm:main} (these are the pairs attaining the bound $|\V\Gamma| = 2|G_v| \log_2(|G_v|/2)$ in part~$(C)$ of Theorem~\ref{thm:main}).
These graphs are studied in detail in \cite{gamma} and what follows is a brief overview of some facts relevant for the topic of this paper.
The graphs $\Gamma_t^\pm$ are defined as \emph{coset graphs} of certain groups $G_t^\pm$. The coset graph $\Cos(G,H,a)$ on a group $G$ relative to a subgroup $H\le G$ and an element $a\in G$ is defined as the graph with vertex set 
 the set of right cosets $G/H = \{Hg \mid g \in G\}$ and with edge set the set $ \{ \{Hg, Hag\} \mid g \in G\} $.

Let us start by considering the group $E_t$ with the following presentation
\begin{eqnarray}\label{eq:E}\nonumber
E_t=\langle x_0,\ldots,x_{2t-1},z&\mid&
x_i^2=z^2=[x_i,z]=1 \textrm{ for }0\leq i\leq 2t-1,\\
&&[x_i,x_j]=1 \textrm{ for }|i-j|\neq t,\\\nonumber
&&[x_i,x_{t+i}]=z \textrm{ for  }0\leq i\leq t-1\rangle.
\end{eqnarray}
We note that $E_t$ is the extraspecial group of order $2^{2t+1}$ of ``plus type'', that is, the central product of $t$ dihedral groups $\D_4$.

We  define two group extensions (namely $G_t^+$ and $G_t^-$) of $E_t$  by the dihedral group
\begin{eqnarray}\label{eq:D}
\D_{2t}=\langle a,b\mid a^{2t}=b^2=1, a^b=a^{-1}\rangle.
\end{eqnarray}
In both extensions, the generators $a$ and $b$ of $\D_{2t}$ act upon the generators of $E_t$ according to the rules:
\begin{eqnarray*}
 x_i^a&=&x_{i+1}\>\quad \hbox{  for } 0\leq i\leq 2t-1,\>\\
x_i^b&=&x_{t-1-i}\> \hbox{  for } 0\leq i\leq 2t-1,
\end{eqnarray*}
where the indices are taken modulo $2t$.
To obtain the first extension $G_t^+$, we let $a^{2t} = b^2 =1$ (resulting in a semidirect product), while for the second extension 
 $G_t^-$ we let $a^{2t}=z$, $b^2 =1$ (resulting in a non-split extension):
$$
G_t^+ = E_t \rtimes \D_{2t}, \qquad
G_t^- = E_t.\D_{2t}.
$$
Finally, let $H_t^\pm$ be the subgroup of $G_t^\pm$ generated by the elements $\{x_0,\ldots,x_{t-1},b\}$
and observe that $H_t^+ \cong H_t^- \cong \C_2^t \rtimes \C_2$. Let
$$
 \Gamma_t^+ = \Cos(G_t^{+},H_t^+,a)\>  \hbox{ and }\>\Gamma_t^- = \Cos(G_t^{-},H_t^-,a).
$$
In Proposition~\ref{prop:someproperties}, we sum up some properties of $\Gamma_t^{\pm }$ which are proved in~\cite{gamma}.
\begin{proposition}\label{prop:someproperties}
The pairs $(\Gamma_t^+,G_t^+)$ and $(\Gamma_t^-,G_t^-)$ are locally-$\D_4$. 
For $t\ge 3$, the graphs $\Gamma_t^+$ and $\Gamma_t^-$ are
not isomorphic to the graphs $\C(r,s)$ for any $r$ and $s$ and satisfy $\Aut(\Gamma_t^\pm)=G_t^\pm$.
Finally, $\Gamma_2^+\cong\C(4,3)$, $G_2^+=\Aut(\Gamma_2^+)$, and $|\Aut(\Gamma_2^-):G_2^-|=9$.
\end{proposition}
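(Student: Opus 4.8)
The plan is to analyse the two coset graphs together, since the computations that matter never distinguish the relation $a^{2t}=1$ from $a^{2t}=z$, and to combine the standard coset-graph/amalgam dictionary with the Praeger--Xu description of the graphs $\C(r,s)$. First I would settle the local-$\D_4$ claim by a direct computation in $\Cos(G_t^{\pm},H_t^{\pm},a)$. Writing $H=H_t^{\pm}=\langle x_0,\dots,x_{t-1},b\rangle$ and using $x_i^a=x_{i+1}$, one sees that $\langle H,a\rangle=G_t^{\pm}$ (conjugating $x_0$ by powers of $a$ produces every $x_i$, hence $z=[x_0,x_t]$ and all of $E_t$, while $a$ and $b$ generate $\D_{2t}$), so $\Gamma_t^{\pm}$ is connected, and $a^{-1}=bab\in HaH$, so it is undirected. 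The valency is $|H:H\cap H^a|$; since $H$ and $H^a=\langle x_1,\dots,x_t,ba^2\rangle$ meet $E_t$ in $\langle x_0,\dots,x_{t-1}\rangle$ and $\langle x_1,\dots,x_t\rangle$ while their images in $\D_{2t}$ are $\{1,b\}$ and $\{1,ba^2\}$, which intersect trivially because $a^2\neq 1$, one gets $H\cap H^a=\langle x_1,\dots,x_{t-1}\rangle$ of order $2^{t-1}$ and valency $2^{t+1}/2^{t-1}=4$. The four neighbours correspond to the cosets of $K:=H\cap H^a$ in $H$, and $G_v^{\Gamma(v)}$ is the image of $H$ there; its kernel is the core of $K$ in $H$, which (as only $b$ moves $K$, to $K^b=\langle x_0,\dots,x_{t-2}\rangle$) equals $K\cap K^b=\langle x_1,\dots,x_{t-2}\rangle$ of order $2^{t-2}$. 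Thus $G_v^{\Gamma(v)}$ is transitive of degree $4$ and order $8$, hence $\D_4$, for both signs and every $t\ge 2$.

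The step I expect to be hardest is determining $\Aut(\Gamma_t^{\pm})$ for $t\ge 3$. As $G_t^{\pm}\le\Aut(\Gamma_t^{\pm})$ is already locally-$\D_4$, the pair $(\Gamma_t^{\pm},\Aut(\Gamma_t^{\pm}))$ is locally-$P$ for a transitive $P\le\Sym(4)$ containing $\D_4$, so $P\in\{\D_4,\Sym(4)\}$. I would rule out $P=\Sym(4)$ by showing $\Gamma_t^{\pm}$ is not $2$-arc-transitive: the locally-$\D_4$ structure attaches to each vertex a canonical partition of its neighbourhood into two pairs (the blocks of the unique $\D_4$-invariant block system), which can be recovered purely graph-theoretically, for instance from the $4$-cycles through the vertex; the stabiliser in $\Sym(4)$ of such a partition is exactly $\D_4$, forcing $P=\D_4$. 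It then remains to prove $|\Aut_v|=|H_t^{\pm}|=2^{t+1}$. This is the delicate point: in a locally-$\D_4$ graph the pointwise stabilisers of successive balls may grow, and one must use the specific geometry of $\Gamma_t^{\pm}$ (equivalently, the rigidity of the underlying amalgam) to show that this chain collapses exactly as it does inside $G_t^{\pm}$, giving $\Aut(\Gamma_t^{\pm})=G_t^{\pm}$.

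Granting this, I would prove $\Gamma_t^{\pm}\not\cong\C(r,s)$ for $t\ge 3$ via the characterisation that a connected $4$-valent arc-transitive graph is some $\C(r,s)$ exactly when it admits an arc-transitive group possessing an \emph{abelian} normal $2$-subgroup with cyclic quotient graph. The evident candidate is $E_t$: one checks that the four neighbours of the base vertex split into exactly two $E_t$-orbits, so $\Gamma_t^{\pm}/E_t\cong\C_{2t}$ is a cycle; but $E_t$ is non-abelian, the obstruction being precisely $z=[x_i,x_{i+t}]$. The remaining work is to show that for $t\ge 3$ no arc-transitive subgroup of $G_t^{\pm}=\Aut(\Gamma_t^{\pm})$ has an \emph{abelian} normal $2$-subgroup with cyclic quotient: such a subgroup must lie in $E_t$ and be invariant and isotropic for the commutator form pairing $x_i$ with $x_{i+t}$, and while $\langle z\rangle$ is invariant it gives $\Gamma_t^{\pm}/\langle z\rangle$ of valency $4$; excluding the larger invariant isotropic subspaces is the computational core here. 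Finally, the case $t=2$ is a finite check: $|\V\Gamma_2^{\pm}|=32$, one exhibits an explicit isomorphism $\Gamma_2^+\cong\C(4,3)$, and a direct (e.g.\ computer) calculation shows $\Aut(\Gamma_2^-)$ has order $9|G_2^-|$, its vertex-stabiliser of order $72$ inducing the exceptional $2$-transitive local group, so $|\Aut(\Gamma_2^-):G_2^-|=9$.
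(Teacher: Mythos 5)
Your opening computation is correct and essentially complete: with $H=H_t^{\pm}=\langle x_0,\dots,x_{t-1},b\rangle$ you correctly get $H^a=\langle x_1,\dots,x_t,ba^2\rangle$, hence $H\cap H^a=\langle x_1,\dots,x_{t-1}\rangle$ and valency $4$, and the core of this subgroup in $H$ is indeed $\langle x_1,\dots,x_{t-2}\rangle$, so the local group is transitive of order $8$, i.e.\ $\D_4$ (you should also record that $H$ is core-free in $G_t^{\pm}$, so that $G_t^{\pm}$ genuinely embeds in $\Aut(\Gamma_t^{\pm})$ as Definition~\ref{def:locally} requires; this follows easily since $\bigcap_i H^{a^i}\leq E_t$ has trivial image modulo $\langle z\rangle$, but it is not nothing). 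For calibration: the paper itself offers no proof of Proposition~\ref{prop:someproperties} — it is quoted from the companion paper~\cite{gamma}, where these graphs are the principal object of study — so the remaining assertions are a theorem in their own right, not a routine remark.

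The genuine gap is exactly where you write ``the delicate point'': you never prove $\Aut(\Gamma_t^{\pm})=G_t^{\pm}$ for $t\geq 3$, and both of your subsequent arguments depend on it. Your reduction of the full automorphism group to being locally-$\D_4$ (recovering the canonical neighbourhood partition from $4$-cycles) is itself unverified — one must actually check that $\Gamma_t^{\pm}$ has the requisite $4$-cycle structure — and even granting it, bounding $|\Aut(\Gamma_t^{\pm})_v|$ is the entire difficulty: locally-$\D_4$ vertex-stabilisers are unbounded in general (that is the very theme of this paper, and $|\V\Gamma_t^{\pm}|=t2^{t+2}$ sits exactly on the extremal bound), so ``the chain collapses as in $G_t^{\pm}$'' is an assertion, not an argument. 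Likewise, the non-isomorphism with $\C(r,s)$ is reduced to classifying $\D_{2t}$-invariant isotropic subgroups of $E_t$ and checking their (non-)semiregularity, which you explicitly defer as ``the computational core''; note also that your stated criterion for membership in the family $\C(r,s)$ omits the non-semiregularity hypothesis of Theorem~\ref{PXu} — a \emph{semiregular} abelian normal $2$-subgroup with cyclic quotient does not trigger that theorem directly, and one must instead route through Theorem~\ref{GarPraCycle} or argue from the non-semiregular action of $\C_2^r$ on $\C(r,s)$. Finally, the $t=2$ claims ($\Gamma_2^+\cong\C(4,3)$, $G_2^+=\Aut(\Gamma_2^+)$, $|\Aut(\Gamma_2^-):G_2^-|=9$) are delegated to an unexhibited isomorphism and an unperformed computation. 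In sum, you have a correct proof of the first sentence of the proposition together with a plausible programme for the rest; as a proof of the whole statement it is incomplete precisely at its load-bearing claims.
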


\subsection{The graphs in Table~\ref{tb:soluble} and Table~\ref{tb:nsoluble}.}\label{ss:spor}
Most of the graphs in this section are obtained from standard graph operations applied to small $3$-valent arc-transitive graphs. We use the Foster Census notation~\cite{Foster} to denote  $3$-valent arc-transitive graphs. For instance, $F_6$ will denote the complete bipartite graph on $6$ vertices, $\Pet$  the Petersen graph,  $\Hea$ the Heawood graph, $F_{18}$ the Pappus graph, $\Tut$ the Tutte-Coxeter graph and $F_{90}$ the unique $3$-valent arc-transitive graph with $90$ vertices. The extensive census of 4-valent edge-transitive graphs of small order in~\cite{SW} is quite useful in understanding the graphs in this section.

Let $\Gamma$ be a graph. The \emph{bipartite double} of $\Gamma$, denoted $\B(\Gamma)$, is the categorical product $\Gamma\times\K_2$, with vertex set $\V(\Gamma)\times\{0,1\}$ and edges $\{(u,i),(v,1-i)\}$ for each edge $\{u,v\}$ of $\Gamma$. The \emph{line graph} of $\Gamma$, denoted $\Line(\Gamma)$, has edges of $\Gamma$ as vertices, with two such edges adjacent in $\Line(\Gamma)$ if they are adjacent in $\Gamma$. The \emph{arc graph} of $\Gamma$, denoted $\AG(\Gamma)$, has arcs of $\Gamma$ as vertices, with two such arcs $(u,v)$, $(v,w)$ adjacent in $\AG(\Gamma)$ if $u\neq w$. The $3$-\emph{arc graph} of $\Gamma$ (see~\cite{threearc}), denoted $\AAA(\Gamma)$, has arcs of $\Gamma$ as vertices, with two such arcs $(v_1,v_2)$, $(w_1,w_2)$ adjacent in $\AAA(\Gamma)$ if $w_1$ is adjacent to $v_1$ in $\Gamma$, $v_1\neq w_2$ and $v_2\neq w_1$. The \emph{hill capping} (see \cite{HillCap}) of $\Gamma$, denoted $\HC(\Gamma)$, has four vertices $\{u_0,v_0\}$, $\{u_0,v_1\}$, $\{u_1,v
 _0\}$, $\{u_1,v_1\}$, for each edge $\{u,v\}$ of $\Gamma$, and each $\{u_i,v_j\}$ is adjacent to each $\{v_j,w_{1-i}\}$, where $u$ and $w$ are distinct neighbors of $v$. Finally, the \emph{squared-arc graph} of $\Gamma$ is denoted $\AAG(\Gamma)$: the vertices of $\AAG(\Gamma)$ are the ordered pairs $((v_1,v_2),(w_1,w_2))$ with $(v_1,v_2)$ and $(w_1,w_2)$ arcs of $\Gamma$, and the edges of $\AAG(\Gamma)$ are the $2$-sets of the form $\{((v_1,v_2),(w_1,w_2)),((w_1,w_2),(v_2,v_3))\}$ with $v_1,v_2,v_3$ a $2$-arc in $\Gamma$.

Except for $\C_5\Box\C_5$ and $\C^{\pm 1}(3,3,3)$, all the graphs in Table~\ref{tb:soluble} and Table~\ref{tb:nsoluble} are obtained by starting with one of the $3$-valent graphs $F_6$, $\Pet$, $\Hea$, $F_{18}$, $\Tut$ or $F_{90}$ and applying some of the graph operations described above. The graph $\C_5\Box\C_5$ is the the cartesian product of two 5-cycles, while $\C^{\pm 1}(3,3,3)$ is one of an infinite family of $4$-valent graphs described in~\cite[Definition~$2.2$]{GarPra2}. For convenience, we define only $\C^{\pm 1}(3,3,3)$. Take $H=\langle m_1,m_2,m_3,g\mid m_i^3=[m_i,m_j]=1,g^3=m_1m_2m_3,m_1^g=m_2,m_2^g=m_3,m_3^g=m_1\rangle$ and define $\C^{\pm 1}(3,3,3)=\Cay(H,\{g,g^{-1},gm_1,(gm_1)^{-1}\})$.

We note that, for all but three pairs $(\Gamma,G)$ appearing in Table~\ref{tb:soluble} and Table~\ref{tb:nsoluble}, we have $G=\Aut(\Gamma)$ and hence the pair is uniquely determined by $\Gamma$. The three exceptional graphs are $\Line(\Tut)$, $\B(\Line(\Tut))$ and $\B(\Line(\Pet))$. If $\Gamma$ is one of $\Line(\Tut)$ or $\B(\Line(\Tut))$, then $|\Aut(\Gamma):G|\leq 2$. Finally, the full automorphism group of $\B(\Line(\Pet))$ is locally 2-transitive (in particular, this graph appears in~\cite[Table 3]{twoarc} as the graph $A[30,1]$), but it contains a subgroup of index 3 which is locally-$\D_4$.

\begin{table}
\begin{center}
\begin{tabular}{|c|c|c|c|c|}\hline
$\Gamma$ &$|\V\Gamma|$&$|G_v|$& $G$ \\\hline
$\Line(F_6)$& $9$&$8$&$\C_3^2\rtimes \D_4$\\
$\B(\Line(F_6))$& $18$&$8$&$(\C_3^2\rtimes \C_2)\rtimes \D_4$\\
$\C_5\Box\C_5$& $25$&$8$&$\C_5^2\rtimes \D_4$\\
$\Line(F_{18})$&$27$&$8$&$3^3_+\rtimes \D_4$\\
$\C^{\pm 1}(3,3,3)$&$81$&$16$&$(\C_3^3\rtimes \C_2)\rtimes \Sym(4)$\\\hline
\end{tabular}
\caption{Pairs in part~$(B)$ of Theorem~\ref{thm:main} with $G$ soluble}\label{tb:soluble}
\end{center}
\end{table}

\begin{table}
\begin{center}
\begin{tabular}{|c|c|c|c|c|c|c|}\hline
    &$\Gamma$ &$|\V\Gamma|$&$|G_v|$&$G$\\\hline
$(i)$&$\Line(\Pet)$& $15$&$8$&$\Sym(5)$\\
$(i)a$&$\AG(\Pet)$&$30$&$8$&$\Sym(5)\times \Sym(2)$\\
$(i)b$&$\Line(\B(\Pet))$&$30$&$8$&$\Sym(5)\times \Sym(2)$\\
$(i)c$&$\B(\Line(\Pet))$&$30$&$8$&$\Sym(5)\times \Sym(2)$\\
$(ii)$&$\Line(\Hea)$&$21$&$16$&$\PGL(2,7)$\\
$(ii)a$&$\B(\Line(\Hea))$&$42$&$16$&$\PGL(2,7)\times \Sym(2)$\\
$(ii)b$&$\HC(\Hea)$&$84$&$16$&$\PSL(2,7)\rtimes \D_4$\\
$(iii)$&$\Line(\Tut)$&$45$&$16$ or $32$& $G\trianglelefteq_{1,2}\PGammaL(2,9)$, $G\neq\Sym(6)$\\
$(iii)a$&$\B(\Line(\Tut))$&$90$&$16$ or $32$&$G\trianglelefteq_{1,2}\PGammaL(2,9)\times \Sym(2)$\\
$(iii)b$&$\AAA(\Tut)$&$90$&$16$&$\PGammaL(2,9)$\\
$(iii)c$&$\Line(F_{90})$&$135$&$32$&$3.\Alt(6).(\C_2^2)$\\
$(iii)d$&$\HC(\Tut)$&$180$&$32$&$\Sym(6)\rtimes \D_4$\\
$(iv)$&$\AG^2(\Tut)$&$8100$&$512$&$\PGammaL(2,9)\wr \Sym(2)$\\\hline
\end{tabular}
\caption{Pairs in part~$(B)$ of Theorem~\ref{thm:main} with $G$ not soluble\newline
\footnotesize (The notation $H\trianglelefteq_{1,2}K$  means that $H\trianglelefteq K$ and that $|K:H|=1$ or $2$.)}\label{tb:nsoluble}
\end{center}
\end{table}

We will need the following two results about the pairs appearing in Table~\ref{tb:soluble} and Table~\ref{tb:nsoluble}.
\begin{lemma}\label{thm:exp2}
Let $(\Gamma,G)$ be a locally-$\D_4$ pair. Assume that $G$ has an abelian minimal normal subgroup $N$ such that $\Gamma/N$ is $4$-valent and that $(\Gamma/N,G/N)$ is one of the pairs in Table~\ref{tb:soluble}. Then either $|\V\Gamma|>2|G_v|\log_2(|G_v|/2)$, $\Gamma=\C(9,1)$ or $(\Gamma,G)$ is one of the pairs in Table~\ref{tb:soluble}.
\end{lemma}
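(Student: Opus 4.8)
The plan is to reduce the statement to a finite computation by exploiting the structure of the abelian minimal normal subgroup $N$ together with the explicit list of base cases in Table~\ref{tb:soluble}. Since $N$ is an abelian minimal normal subgroup of $G$, it is elementary abelian, say $N\cong \C_p^d$ for some prime $p$. Because $\Gamma/N$ is assumed $4$-valent, $N$ acts on $\V\Gamma$ with all orbits of the same size $|N|=p^d$, and $|\V\Gamma|=|N|\cdot|\V(\Gamma/N)|$. Moreover, the vertex-stabiliser is preserved under the quotient: as noted in the excerpt, when $\Gamma/N$ is $4$-valent we have $(G/N)_{v^N}\cong G_v$, so $|G_v|$ equals the vertex-stabiliser order of the base pair, which for every row of Table~\ref{tb:soluble} is either $8$ or $16$. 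Thus $2|G_v|\log_2(|G_v|/2)$ is either $2\cdot 8\cdot\log_2 4=32$ or $2\cdot 16\cdot\log_2 8 = 96$, a fixed small constant in each case, while $|\V\Gamma| = |N|\cdot|\V(\Gamma/N)|$ with $|\V(\Gamma/N)|\in\{9,18,25,27,81\}$.

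First I would observe that for all but the smallest values of $|N|$ the inequality $|\V\Gamma|>2|G_v|\log_2(|G_v|/2)$ is immediate from the size estimate above: since $|\V(\Gamma/N)|\ge 9$ and $p^d\ge p\ge 2$, the product $|N|\cdot|\V(\Gamma/N)|$ dwarfs the constant $32$ or $96$ except in a handful of cases where $|N|$ is tiny. So the whole content reduces to classifying the possible extensions when $|N|$ is small, i.e.\ $N\cong\C_p$ or $\C_2^2$ or similar, paired with the smallest base graphs. For each such small $N$, the extension $(\Gamma,G)$ is constrained: $G$ is an extension of $G/N$ (one of the explicit soluble groups $\C_3^2\rtimes\D_4$, $(\C_3^2\rtimes\C_2)\rtimes\D_4$, $\C_5^2\rtimes\D_4$, $3^3_+\rtimes\D_4$, $(\C_3^3\rtimes\C_2)\rtimes\Sym(4)$) by the elementary abelian module $N$, and the locally-$\D_4$ condition forces strong compatibility between the $G/N$-action and the arc structure of $\Gamma$. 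The prime $p$ is also restricted: the $G/N$-module $N$ must be a module over the relevant soluble group, and minimality of $N$ forces it to be an irreducible $\mathbb{F}_p[G/N]$-module, which severely limits $p$ and $d$.

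The key steps, in order, are: (1) record $|N|=p^d$, write $|\V\Gamma|=p^d\,|\V(\Gamma/N)|$ and $|G_v|\in\{8,16\}$ from the table; (2) check directly that whenever $p^d\,|\V(\Gamma/N)| > 2|G_v|\log_2(|G_v|/2)$ we are done, reducing to a short finite list of $(\Gamma/N, p, d)$ triples failing this; (3) for each surviving triple, use the irreducibility of $N$ as an $\mathbb{F}_p[G/N]$-module to pin down $p$ and $d$, and then enumerate the possible extensions $G=N.(G/N)$ carrying a locally-$\D_4$ arc-transitive action; (4) identify each resulting $(\Gamma,G)$, showing it is either the special graph $\C(9,1)$ (which arises, as the notation $9=r$ with the cyclic structure suggests, from a $\C_3$-extension over the $\C_3^2\rtimes\D_4$ base of $\Line(F_6)$) or lands back in Table~\ref{tb:soluble}. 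I expect step (3)—(4) to be the main obstacle: determining exactly which extensions exist and verifying the local action requires either careful cohomological/module-theoretic bookkeeping or, more realistically in a paper of this kind, a computer-assisted enumeration in a computational algebra system over the finitely many admissible $(p,d,G/N)$. The appearance of the isolated exception $\C(9,1)$ signals that precisely one borderline extension escapes both the size inequality and the table, so the delicate part is confirming that it is the unique such escapee and that it is genuinely $\C(9,1)$ rather than a new graph.
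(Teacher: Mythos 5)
Your proposal matches the paper's proof essentially step for step: assume $|\V\Gamma|=|N|\,|\V(\Gamma/N)|\le 2|G_v|\log_2(|G_v|/2)$, note this bound is the constant $32$ or $96$ for each row, inspect Table~\ref{tb:soluble} to see that only $\Gamma/N=\Line(F_6)$ survives (giving $9|N|\le 32$, hence $|N|\le 3$, so the module-theoretic machinery is not even needed since $d=1$ automatically), and settle the cases $|N|=2,3$ by a \texttt{Magma} enumeration of extensions, exactly as the paper does. One parenthetical slip: $\C(9,1)$ has $9\cdot 2=18$ vertices, so it arises as the $|N|=2$ cover of $\Line(F_6)$ (alongside $\B(\Line(F_6))$), not from a $\C_3$-extension; the $|N|=3$ case yields $\Line(F_{18})$ instead.
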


\begin{proof}
As $\Gamma/N$ is $4$-valent, the group $N$ acts semiregularly on $\V\Gamma$, $G/N$ acts faithfully on $\V(\Gamma/N)$, $(\Gamma/N,G/N)$ is locally-$\D_4$, the vertex-stabiliser in $G/N$ is isomorphic to $G_v$ and $|\V\Gamma|=|\V(\Gamma/N)||N|$. We may assume that $|\V(\Gamma/N)||N|\leq 2|G_v|\log_2(|G_v|/2)$. Since $|N|\geq 2$, a direct inspection of the pairs in Table~\ref{tb:soluble} reveals that we must have $\Gamma/N=\Line(F_6)$ and hence $9|N|\leq 32$. In particular, we may assume that $|N|\leq 3$. The rest of the proof is computational with the help of \texttt{Magma}~\cite{magma}. If $|N|=2$, then either $\Gamma=\C(9,1)$ or $\Gamma=\B(\Line(F_6))$. If $|N|=3$, then $\Gamma=\Line(F_{18})$.
\end{proof}

\begin{lemma}\label{thm:exp}
Let $(\Gamma,G)$ be a locally-$\D_4$ pair. Assume that $G$ has a minimal normal subgroup $N$ such that $\Gamma/N$ is $4$-valent and that $(\Gamma/N,G/N)$ is one of the pairs in Table~\ref{tb:nsoluble}. Then either $|\V\Gamma|>2|G_v|\log_2(|G_v|/2)$ or $(\Gamma,G)$ is one of the pairs in Table~\ref{tb:nsoluble}.
\end{lemma}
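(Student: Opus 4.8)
The plan is to mirror the structure of the proof of Lemma~\ref{thm:exp2}, exploiting the now-familiar consequences of $\Gamma/N$ being $4$-valent. As in that lemma, since $\Gamma/N$ is $4$-valent we know that $N$ acts semiregularly on $\V\Gamma$, that $G/N$ acts faithfully on $\V(\Gamma/N)$ with $(G/N)_{v^N}\cong G_v$, that $(\Gamma/N,G/N)$ is locally-$\D_4$, and that $|\V\Gamma|=|\V(\Gamma/N)|\,|N|$. The key difference from Lemma~\ref{thm:exp2} is that here $N$ need not be abelian, so the first substantive step is to understand which minimal normal subgroups $N$ are even possible given that $G/N$ is the group recorded in the relevant row of Table~\ref{tb:nsoluble}.

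First I would assume for contradiction (or simply assume, as the alternative to the desired conclusion) that $|\V\Gamma|\le 2|G_v|\log_2(|G_v|/2)$, and use $|\V\Gamma|=|\V(\Gamma/N)|\,|N|$ together with $|N|\ge 2$ to bound $|N|$ row by row. Because $|G_v|$ takes only the small values $8,16,32,512$ appearing in Table~\ref{tb:nsoluble}, the right-hand side $2|G_v|\log_2(|G_v|/2)$ is an explicit small number in each case, and dividing by the (already fairly large) value of $|\V(\Gamma/N)|$ forces $|N|$ to be very small, in most rows ruling out $|N|\ge 2$ outright. This should eliminate all but a handful of candidate rows, exactly the ones where $|\V(\Gamma/N)|$ is smallest relative to $2|G_v|\log_2(|G_v|/2)$, namely the rows built on $\Line(\Pet)$, $\Line(\Hea)$ and $\Line(\Tut)$ with the smallest vertex counts.

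Next I would pin down the structure of $N$ in the surviving cases. Since $N$ is a minimal normal subgroup of $G$ and $G/N$ is a specific (almost simple or nearly so) group from the table, $N$ is either elementary abelian or a direct product of copies of a nonabelian simple group; the bound $|N|\le$ (a small integer) already forces $N$ to be elementary abelian of small order, so in fact the ``nonabelian'' hypothesis of the lemma only matters through the group-theoretic restrictions on how small $N$ can be. With $|N|$ and $G/N$ both explicitly known, one reconstructs $G$ as an extension $N.(G/N)$ and then $\Gamma$ as the corresponding lift of $\Gamma/N$; this is precisely the point where the proof becomes computational and one invokes \texttt{Magma}~\cite{magma} to enumerate the finitely many extensions and the resulting graphs, checking in each case that the lifted pair $(\Gamma,G)$ is again one of the entries of Table~\ref{tb:nsoluble}.

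The main obstacle I anticipate is the case analysis where $|N|$ is not immediately forced to be $1$: here one must verify that every admissible extension $N.(G/N)$ yielding a $4$-valent locally-$\D_4$ lift lands back inside Table~\ref{tb:nsoluble} rather than producing a genuinely new pair. This requires care to ensure that all cohomologically distinct extensions and all connected $4$-valent lifts are accounted for, and it is exactly the kind of finite but delicate verification for which the \texttt{Magma} computation is indispensable; unlike Lemma~\ref{thm:exp2}, where the surviving case $\Gamma/N=\Line(F_6)$ was unique, here one may have to handle two or three surviving rows and confirm that the families $(\Gamma_t^\pm,G_t^\pm)$ do not intrude, consistent with the ``equality'' clause singled out in Theorem~\ref{thm:main}.
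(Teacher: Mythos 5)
Your proposal is correct in outline and shares the paper's skeleton exactly: the same four quotient facts, the assumption $|\V\Gamma|=|\V(\Gamma/N)||N|\le 2|G_v|\log_2(|G_v|/2)$, a row-by-row bound on $|N|$, and a final appeal to \texttt{Magma}. Two points of precision first: done carefully, the elimination leaves five configurations rather than ``two or three'' --- rows $(i)$ with $|N|=2$, $(ii)$ with $|N|\le 4$, $(ii)a$ with $|N|=2$, $(iii)$ with $|N|\le 5$ and $|G_v|=32$, and $(iii)a$ with $|N|=2$ and $|G_v|=32$ --- so the bipartite doubles survive too; and in rows $(iii)$, $(iii)a$ the table fixes $G/N$ only up to the $\trianglelefteq_{1,2}$ condition, so several quotients ($M_{10}$, $\PGL(2,9)$, $\PGammaL(2,9)$) are in play, which your blanket ``$G/N$ is a specific group from the table'' glosses over (indeed with $|G_v|=16$ in row $(iii)$ one has $45\cdot 2=90\le 96$, a case the socle computation absorbs since $M_{10}$ and $\PGL(2,9)$ share the socle $\Alt(6)$). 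Where you genuinely diverge is the organisation of the computation. The paper never enumerates extensions $N.(G/N)$: for rows $(i)$, $(ii)$, $(ii)a$ it notes $|G|=|G/N||N|<2000$ and pulls every candidate $G$ from the \texttt{SmallGroups} database, then runs the core-free-subgroup/self-paired-suborbit check; for rows $(iii)$, $(iii)a$, where $|G|$ can reach $1440\cdot 5$ and escape the database, its one nontrivial idea is to pass to the socle: $S/N=\soc(G/N)\cong \Alt(6)$ (resp.\ $\Alt(6)\times\C_2$) is transitive on $\V(\Gamma/N)$, hence $S$ is transitive on $\V\Gamma$ with $|S|\le 2000$ and again lies in the database, and since $S_v$ has two orbits of size $2$ on $\Gamma(v)$ one searches instead for two distinct self-paired suborbits of size $2$ whose union yields a connected locally-$\D_4$ pair. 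Your cohomological enumeration of extensions would also work: since $|N|\le 5$ forces $N$ elementary abelian (as you observe), all extensions are accessible via $H^2$ over each irreducible $G/N$-action on $N$ (irreducibility being exactly what makes $N$ minimal normal), and this buys independence from the order-$2000$ database cap at the cost of having to enumerate actions and all admissible quotient groups explicitly; the paper's socle trick sidesteps extension theory entirely. One last nit: your closing remark about $(\Gamma_t^\pm,G_t^\pm)$ is out of place --- those pairs arise only from quotients isomorphic to $\C(r,s)$ in Theorem~\ref{WreathCover}, not from quotients in Table~\ref{tb:nsoluble}.
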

\begin{proof}
As $\Gamma/N$ is $4$-valent, the group $N$ acts semiregularly on $\V\Gamma$, $G/N$ acts faithfully on $\V(\Gamma/N)$, $(\Gamma/N,G/N)$ is locally-$\D_4$, the vertex-stabiliser in $G/N$ is isomorphic to $G_v$ and $|\V\Gamma|=|\V(\Gamma/N)||N|$. We may assume that $|\V(\Gamma/N)||N|\leq 2|G_v|\log_2(|G_v|/2)$. Since $|N|\geq 2$, a direct inspection of the pairs in Table~\ref{tb:nsoluble} reveals that we must have that the pair $(\Gamma/N,G/N)$ is either in row $(i)$ with $|N|=2$, in row $(ii)$ with $|N|\leq 4$, in row $(ii)a$ with $|N|=2$, in row $(iii)$ with $|N|\leq 5$ and $|G_v|=32$, or in row $(iii)a$ with $|N|=2$ and $|G_v|=32$. 

We use \texttt{Magma} to deal with these cases. If $(\Gamma/N,G/N)$ is in row $(i)$, $(ii)$ or $(ii)a$, then $|G|=|G/N||N|<2000$ and hence $G$ must appear in the \texttt{SmallGroups} database
of \texttt{Magma}.  For each candidate group $G$, we compute the list of core-free
subgroups $Q$ of $G$ of order $|G_vN/N|$ and we construct the permutation
representation of $G$ on the right cosets of $Q$ in $G$. Finally, we check
whether there exists a self-paired suborbit of size $4$ giving rise to a
connected locally-$\D_4$ pair. The only pairs arising in this way are already in Table~\ref{tb:nsoluble}.

We now assume that $(\Gamma/N,G/N)$ is in row $(iii)$ (respectively $(iii)a$) and hence $G/N$ is isomorphic to $\PGammaL(2,9)$ (respectively $\PGammaL(2,9)\times \Sym(2)$). Consider the socle $S/N=\soc(G/N)$. We have $S/N\cong \Alt(6)$ (respectively $S/N\cong \Alt(6)\times \C_2$) and $S/N$ is transitive
on $V(\Gamma/N)$. Therefore $S$ acts transitively on $V\Gamma$ and $|S|=|S/N||N|\leq
2000$. In particular, the group $S$ can be found in
the \texttt{SmallGroups} database. It can be checked that the stabiliser of the
vertex $v^N$ in $S/N$ has two orbits on $\Gamma(v^N)$ and hence $S_v$ has
two orbits on $\Gamma(v)$. For each candidate $S$, we compute the list of core-free
subgroups $Q$ of $S$ of order $|S_vN/N|$ and we construct the permutation
representation of $S$ on the right cosets of $Q$ in $S$. Finally we check
whether there exists two distinct self-paired suborbits of size $2$ whose
union gives rise to a connected locally-$\D_4$ pair. The only pairs arising in this way are already in Table~\ref{tb:nsoluble}.
\end{proof}

\section{$G$ has an abelian minimal normal subgroup $N$}\label{sec:abelian}
\subsection{$\Gamma/N$ has valency at most 2}\label{sec:basic}
The case when the quotient is a cycle was examined in some details in~\cite{GarPra2} and we report some results that follow from their work.

\begin{theorem}[\cite{GarPra2}, Theorem 1.1, Lemma 3.1]
\label{GarPraCycle}Let $\Gamma$ be a connected $4$-valent $G$-arc-transitive graph and let $N$ be a minimal normal $p$-subgroup of $G$ with orbits of size $p^s$, for some prime $p$. Let $K$ denote the kernel of the action of $G$ on the $N$-orbits. Suppose that the quotient $\Gamma/N$ is a cycle of length $r\geq 3$. Then either $G$ has an abelian normal subgroup that is not semiregular on the vertices of $\Gamma$, or $p$ is odd and $K_v$ is a nontrivial elementary abelian $2$-group of order dividing $2^s$.
\end{theorem}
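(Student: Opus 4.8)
The plan is to branch on whether $G$ has a non-semiregular abelian normal subgroup. If it does, the first alternative holds and there is nothing to prove, so I assume from now on that every abelian normal subgroup of $G$ is semiregular. In particular $N$ — which, being a minimal normal $p$-subgroup, is elementary abelian, so $N\cong \C_p^s$ — is semiregular, and since its orbits are the blocks $B_0,\dots,B_{r-1}$ of size $p^s$ forming the cycle $\Gamma/N\cong\C_r$, it acts regularly on each block; I identify each $B_j$ with $N$ via this regular action. First I record that $K_v\neq 1$: if $K=N$, then $G/N$ embeds faithfully into $\Aut(\C_r)=\D_r$, giving $|G_v|\le 2$, which is impossible for a $4$-valent arc-transitive graph (where $4\mid|G_v|$); hence $K>N$ and $K_v\neq 1$. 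Because the quotient is a $2$-valent cycle while $\Gamma$ is $4$-valent and arc-transitive, an arc-reversal argument shows each vertex has exactly two neighbours in each adjacent block; thus the edges between consecutive blocks form a bipartite Cayley graph, and the two forward-neighbours of $v=0\in B_i$ in $B_{i+1}$ are a pair $\{d_1^{(j)},d_2^{(j)}\}$ with nonzero difference $\delta^{(j)}=d_1^{(j)}-d_2^{(j)}$ at each transition $B_j\to B_{j+1}$.

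Next I analyse $K_v$ through the conjugation action on $N$. Each $g\in K_v$ normalises $N$ and induces $\alpha_g\in\Aut(N)=\GL(s,p)$, giving a homomorphism $K_v\to\GL(s,p)$. Writing the action of $g$ on each block $B_j$ as the affine map $n\mapsto\alpha_g(n)+c_j$ (with $c_i=0$), the requirement that $g$ preserve adjacency between $B_j$ and $B_{j+1}$ reads
\[
\{\alpha_g(d_1^{(j)})+c_{j+1},\,\alpha_g(d_2^{(j)})+c_{j+1}\}=\{c_j+d_1^{(j)},\,c_j+d_2^{(j)}\}.
\]
Comparing the differences of the two sides forces $\alpha_g(\delta^{(j)})=\pm\delta^{(j)}$ for every $j$, and the matching then determines $c_{j+1}$ from $c_j$. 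I then observe that $W=\langle\delta^{(0)},\dots,\delta^{(r-1)}\rangle$ is a nonzero $G$-submodule of $N$, since $G$ permutes the transitions and so each $\alpha_h$ ($h\in G$) preserves $W$; the minimality of $N$ therefore gives $W=N$, i.e.\ the differences $\delta^{(j)}$ span $N$.

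Now I split on the parity of $p$. If $p=2$, then $\pm\delta^{(j)}=\delta^{(j)}$, so $\alpha_g$ fixes a spanning set and $\alpha_g=\mathrm{id}$ for all $g\in K_v$; consequently the group $T$ of all automorphisms in $G$ acting as a translation on every block is abelian and normal in $G$, and contains the nontrivial group $K_v$ inside the stabiliser $T_v$, so $T$ is non-semiregular, contradicting our standing assumption. Hence $p$ is odd. For $p$ odd the spanning fact shows $\alpha_g^2$ fixes a spanning set, so $\alpha_g^2=\mathrm{id}$; thus the image of $K_v$ in $\GL(s,p)$ has exponent dividing $2$, hence is elementary abelian, and being a group of commuting $\pm1$-diagonalisable matrices over $\mathbb{F}_p$ it is simultaneously diagonalisable and so has order dividing $2^s$. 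The homomorphism $g\mapsto\alpha_g$ is moreover injective: an element with $\alpha_g=\mathrm{id}$ is a per-block translation, and the $c_{j+1}$-from-$c_j$ recursion (using that for $p$ odd only the $+$ matching survives) forces all $c_j=0$, so $g=1$. Combining this with $K_v\neq1$ shows $K_v$ is a nontrivial elementary abelian $2$-group of order dividing $2^s$, as required.

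The most delicate steps are the local normal form and the coordinate bookkeeping. Concretely, I expect the main work to be justifying the ``exactly two neighbours in each adjacent block'' description together with the bipartite Cayley structure (which relies on the arc-reversing symmetry), and then verifying that a single $\alpha_g$ must satisfy the fix-or-negate condition simultaneously at all $r$ transitions while the translations $c_j$ propagate consistently around the cycle. The key simplifications are that the spanning statement is obtained cleanly from the minimality of $N$ as a $G$-module rather than from a hands-on connectivity analysis, and that identifying the abelian normal group $T$ in the $p=2$ case is precisely what ties the even prime to the first alternative; these two observations are the crux of the argument.
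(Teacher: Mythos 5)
Your argument is correct, but there is nothing internal to compare it against: the paper does not prove Theorem~\ref{GarPraCycle} at all, it imports it verbatim from Gardiner and Praeger~\cite{GarPra2} (their Theorem~1.1 and Lemma~3.1), so your proposal is in effect a reconstruction of the proof in the cited source rather than an alternative to anything in this paper. What you do is sound: once every abelian normal subgroup is assumed semiregular, $N$ is regular on each block, the identification of blocks with $N$ and the affine normal form $n\mapsto\alpha_g(n)+c_j$ for elements of $K$ are exactly the right local coordinates; the fix-or-negate condition $\alpha_g(\delta^{(j)})=\pm\delta^{(j)}$, the injectivity of $g\mapsto\alpha_g$ on $K_v$ for odd $p$ (via $2\delta^{(j)}\neq 0$), and the step from exponent~$2$ to simultaneous $\pm1$-diagonalisation (which is precisely the paper's Lemma~\ref{PabloLemma}) all check out, giving $|K_v|$ dividing $2^s$; and $K_v\neq 1$ follows as you say because $K=N$ would embed $G_v$ into a point stabiliser of $\D_r$, of order~$2$, against $4$ dividing $|G_v|$ for a $4$-valent arc-transitive pair. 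Two genuinely nice simplifications relative to a hands-on treatment are worth highlighting: you obtain the spanning of $N$ by the differences $\delta^{(j)}$ directly from minimality of $N$ as a $G$-module (the verification that conjugation by $h\in G$ sends $\delta^{(j)}$ to $\pm\delta^{(j'')}$, with a sign flip when $h$ reverses the cycle orientation, is routine and does check out), rather than from a connectivity analysis; and in the $p=2$ case you package the contradiction as the abelian normal ``per-block translation'' subgroup $T\trianglelefteq G$ with $1\neq K_v\leq T_v$, which is exactly the mechanism by which the first alternative (and hence Theorem~\ref{PXu}) is triggered. One small point you gloss over and should state: before the $2{+}2$ neighbour count, intra-block edges must be excluded, which is immediate since ``arc inside a block'' and ``arc between blocks'' are $G$-invariant classes of arcs and the latter is nonempty because the quotient is a cycle; with that, transitivity of $G_v$ on $\Gamma(v)$ together with $G_v$ permuting the two adjacent blocks forces the $2{+}2$ split as you claim. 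This is not a gap in substance, only in exposition.
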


One of the cases in the conclusion of Theorem~\ref{GarPraCycle} is that $G$ has an abelian normal subgroup that is not semiregular on the vertices of $\Gamma$. It turns out that this is a very strong restriction, as seen in the following theorem, which is \cite[Theorem~$1$ with $p=2$ ]{PraegerXu}.

\begin{theorem}\label{PXu}
Suppose that $\Gamma$ is a connected $4$-valent $G$-arc-transitive graph, and that $G$ has an abelian normal subgroup which is not semiregular on the vertices of $\Gamma$. Then $\Gamma\cong \C(r,s)$ for some $r\geq \max\{3,s+1\}$, $s\geq 1$. 
\end{theorem}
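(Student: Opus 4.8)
The plan is to exploit the fact, used repeatedly in this paper, that for a normal subgroup $N$ of an arc-transitive $G$ the quotient $\Gamma/N$ has valency $0$, $1$, $2$ or $4$, and that valency $4$ forces $N$ to be semiregular. So I would start by letting $A$ be an abelian normal subgroup of $G$ that is \emph{not} semiregular on $\V\Gamma$, and consider the quotient $\Gamma/A$. Since the four neighbours of a vertex $v$ form a single $G_v$-orbit and $A\trianglelefteq G$ induces a $G_v$-invariant partition of them into $A$-orbits of equal size, the quotient $\Gamma/A$ has valency $0$, $1$, $2$ or $4$. Valency $4$ is excluded because it would make $A$ semiregular. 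Valency $0$ means $A$ is transitive on $\V\Gamma$, and valency $1$ means $A$ has exactly two orbits with all edges running between them; in either case $A$ is abelian and transitive on each of its orbits, hence regular on each orbit, hence semiregular, contradicting the hypothesis. This reduces the whole theorem to the single case in which $\Gamma/A$ has valency $2$, i.e.\ $\Gamma/A$ is a cycle $\C_r$, and a short separate check of the degenerate short cycles forces $r\ge 3$.

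Next I would set up the cycle case. The $A$-orbits are fibres $B_0,\dots,B_{r-1}$ indexed cyclically, with edges only between consecutive fibres. Arc-transitivity together with the equal-block-size argument shows that each vertex has exactly two neighbours in each of its two adjacent fibres, so $\Gamma$ is a $2$-regular bipartite connection between consecutive fibres — the lexicographic-type pattern underlying $\C(r,1)=\C_r[\overline{\K_2}]$. Because $A$ is abelian, all vertices in a common fibre share the same $A$-stabiliser, so the nontrivial $A_v$ is constant along each fibre and acts on the neighbouring fibres; analysing this action (a normal subgroup of the transitive degree-$4$ group $G_v^{\Gamma(v)}\le \Sym(4)$) shows the relevant part of $A$ is an elementary abelian $2$-group, and that the fibres and their connections are governed entirely by which consecutive columns $A_v$ fails to fix. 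The parameter $s$ is then read off as the ``width'' of this local kernel: it is precisely the number of consecutive fibres over which $A_v$ acts nontrivially, matching the base group $\C_2^r$ of $\C_2\wr\D_r$ in the standard model, whose point stabiliser has the expected order $2^{r-s}$.

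Finally I would reconstruct the graph globally. Using connectedness and arc-transitivity of the full group $G$ (not just of $A$) to guarantee that the local adjacency rule is the same at every fibre, I would propagate the local data once around the cycle and identify $\Gamma$ with the graph built from the $(s-1)$-paths of $\C(r,1)$, i.e.\ with $\C(r,s)$; the requirement that the construction be consistent and the fibre stabiliser nontrivial yields the constraints $s\ge 1$ and $r\ge\max\{3,s+1\}$. I expect the reconstruction in this last paragraph to be the main obstacle: reducing to the cycle quotient is clean, but pinning down the fibre sizes, showing the between-fibre connection is exactly the lexicographic one, and verifying that the local isomorphism type extends coherently around the whole cycle to give precisely $\C(r,s)$ (rather than some twisted variant) requires careful bookkeeping of the $A$- and $G_v$-actions and is where the real content of the Praeger--Xu argument lies.
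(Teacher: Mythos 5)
There is a genuine gap, and it occurs before the part you yourself flag as hard. First, note that the paper does not prove this statement at all: Theorem~\ref{PXu} is imported verbatim from Praeger--Xu (\cite{PraegerXu}, Theorem~1 with $p=2$), so your sketch is competing with their full argument, not with anything in this paper. The concrete error is in your dismissal of the valency-$0$ and valency-$1$ quotients. For valency $0$ the argument is fine: a transitive abelian group is regular, hence semiregular. But for valency $1$ the inference ``$A$ transitive on each of its two orbits, hence regular on each orbit, hence semiregular'' is false: regularity of the \emph{induced} action on each orbit only shows that $A_v$ fixes the orbit $v^A$ pointwise; a nontrivial element of $A_v$ can still act (freely) on the other orbit. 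A concrete counterexample to your claimed contradiction: take $\Gamma=\K_{4,4}\cong\C(4,1)$, $G=\Sym(4)\wr\Sym(2)$ acting arc-transitively, and $A=\C_2^2\times\C_2^2$ (a Klein group on each side). Then $A$ is abelian, normal in $G$, has exactly two orbits (so $\Gamma/A\cong\K_2$), and is \emph{not} semiregular, since the factor acting on one side lies in the stabiliser of every vertex of the other side. So the two-orbit case genuinely occurs and must terminate in the conclusion $\Gamma\cong\C(r,s)$, not in a contradiction; your reduction ``the whole theorem reduces to the cycle case'' therefore fails. Indeed the paper itself, in the proof of Theorem~\ref{thm:mainbasic}, treats $\Gamma/N\cong\K_1$ or $\K_2$ as live cases requiring a separate argument (there via faithfulness of $G_v$ on $\Gamma(v)$, citing Gardiner--Praeger~\cite{GarPra1}), rather than as impossible.

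Second, even granting the cycle case, your final paragraph explicitly defers the reconstruction of $\Gamma$ as $\C(r,s)$ -- showing the between-fibre bipartite connections are the lexicographic ones, that the ``width'' $s$ of the local kernel is well-defined and constant, and that no twisted variant survives the propagation around the cycle -- and as you say, that \emph{is} the content of Praeger--Xu's theorem. Along the way several asserted steps (that the relevant part of $A$ is elementary abelian of exponent $2$, that fibres have size $2^{\textrm{something}}$ consistent with $\V(\C(r,s))=r2^s$, that the stabiliser orders match) are stated without proof. As it stands, the proposal is a plausible opening reduction containing one incorrect case-elimination, followed by an honest acknowledgement that the core of the theorem remains to be proved; it is not a proof.
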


As noted in the introduction, there is an obvious action of $\C_2^r\rtimes \D_r$ on $\C(r,s)$. It turns out that, 
when $r\neq 4$, this is in fact the full automorphism group.

\begin{theorem}[\cite{PraegerXu}, Theorem~$2.13$]\label{PXuAutomorphism}
Let $\Gamma=\C(r,s)$ and let $H=\C_2^r\rtimes\D_r$. If $r\neq 4$, then $\Aut(\Gamma)=H$. Moreover, $|\Aut(\C(4,1)):H|=9$, $|\Aut(\C(4,2)):H|=3$ and $|\Aut(\C(4,3)):H|=2$.
\end{theorem}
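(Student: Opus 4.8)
The containment $H\le\Aut(\Gamma)$ is built into the construction, so for $r\neq4$ the whole problem is the reverse inclusion; since $|H|=r2^{r+1}$ does not depend on $s$, this amounts to bounding $|\Aut(\Gamma)|$ by $r2^{r+1}$. The plan is to first record a convenient combinatorial model: a vertex of $\C(r,s)$ is a pair $(I,\sigma)$, where $I$ is a set of $s$ consecutive points of $\mathbb Z_r$ (a ``window'') and $\sigma\colon I\to\{0,1\}$ is a labelling, and $(I,\sigma)$ is adjacent to $(I',\sigma')$ exactly when $I'$ is obtained from $I$ by a shift by $\pm1$ and $\sigma,\sigma'$ agree on the overlap $I\cap I'$. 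In this model the map $\pi\colon(I,\sigma)\mapsto I$ is a projection onto the cycle $\C_r$ whose fibres are the $\C_2^r$-orbits, and the bipartite graph between two consecutive fibres is a disjoint union of $2^{s-1}$ copies of $\K_{2,2}$. The goal becomes to show that this fibre partition is canonical, i.e. invariant under $\Aut(\Gamma)$.

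For $s=1$ the two vertices in each fibre are \emph{twins} (they have equal neighbourhoods), so the fibres are visibly $\Aut$-invariant, $\C(r,1)=\C_r[\overline{\K_2}]$, and $\Aut(\Gamma)=\C_2\wr\D_r=H$ for $r\neq4$ follows from the standard description of the automorphism group of a lexicographic product. For $s\ge2$ I would instead analyse the short cycles of $\Gamma$. A case check (valid for $r\ge5$) shows that every $4$-cycle of $\C(r,s)$ is one of the ``standard'' copies of $\K_{2,2}$ lying between two consecutive fibres; consequently the relation ``$u,v$ are opposite corners of a common $4$-cycle'' is graph-theoretic, and its transitive closure partitions each fibre into blocks of size $4$, namely the orbits under flipping the two extreme bits of the window. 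These size-$4$ blocks therefore form a canonical block system. I expect this $4$-cycle analysis, together with its breakdown at $r=4$, to be the main obstacle, since it is precisely where the geometry of the graph must be used.

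Granting the canonical size-$4$ blocks, the plan is to induct on $s$ in steps of two. Contracting each size-$4$ block to a point yields a $4$-valent graph which one checks is isomorphic to $\C(r,s-2)$, the surviving data being the window together with its ``middle'' labelling; this gives a homomorphism $\phi\colon\Aut(\C(r,s))\to\Aut(\C(r,s-2))$. By induction the target is $H$, and one verifies that $\phi$ restricts to an isomorphism on the copy of $H$ already inside $\Aut(\C(r,s))$, so $\phi$ is surjective and it suffices to prove $\ker\phi=1$. An element of $\ker\phi$ fixes every fibre setwise and, inducing the identity on the contraction, alters only the two extreme bits of each window; preserving the flip-leftmost and flip-rightmost matchings it acts on each size-$4$ block inside the self-centralising regular group $\C_2^2$. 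Now (here $s\ge3$, so each window has interior positions) an extreme position of one window is an interior position of an adjacent window, where the element effects no flip; the requirement that the element respect the $\K_{2,2}$-connections between overlapping fibres then forces all these extreme flips to be trivial as well, whence $\ker\phi=1$ and $\Aut(\C(r,s))=H$.

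Finally, the base cases $s\in\{1,2\}$ and the exceptional value $r=4$ must be treated directly. For $s=2$ the size-$4$ blocks are the whole fibres, the contraction is onto the bare cycle $\C_r$, and the kernel $K$ of the action on fibres has to be computed by hand: $K$ preserves the flip-leftmost and flip-rightmost matchings on every fibre, hence acts as bit-flips, and the agreement of flip amounts on the position shared by two consecutive windows identifies $K$ with $\C_2^r$, giving $|\Aut(\C(r,2))|=2^r\cdot2r=|H|$. For $r=4$ the $4$-cycle analysis fails because the windows two steps apart coincide, the graphs degenerate (for instance $\C(4,1)\cong\K_{4,4}$), and the three indices $9$, $3$, $2$ are obtained by a direct finite computation.
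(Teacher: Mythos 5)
The paper contains no proof of this statement at all: it is imported verbatim from Praeger and Xu (\cite{PraegerXu}, Theorem~2.13), so your self-contained argument is necessarily a different route, and on inspection it is essentially sound. The window model is the right normal form; your $4$-cycle classification is correct, since projecting a $4$-cycle to $\C_r$ gives a closed $\pm1$-walk of length $4$, the patterns with two steps of each sign but not alternating (e.g.\ windows $I,I+1,I+2,I+1$ or $I,I+1,I,I-1$) are killed for $s\ge2$ because the two overlaps together cover the whole middle window, and the wrap-around pattern $(+,+,+,+)$ requires $r\mid 4$ --- which shows the classification actually holds for \emph{every} $r\neq 4$, not merely $r\ge5$ as your parenthesis claims. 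That quantifier matters, because your $s=2$ base case must also cover $(r,s)=(3,2)$ (where $\C(3,2)$ is the line graph of the cube, the cuboctahedron, with automorphism group of order $48=|H|$); once corrected to $r\neq4$ your argument covers it with no other change. The remaining steps check out: the subgroup of $\Sym(\{0,1\}^2)$ preserving both the flip-leftmost and flip-rightmost pairings is exactly the translation group $\C_2^2$, so a kernel element acts blockwise by extreme-bit flips; for $s\ge3$ the observation that an extreme position of one window is interior to an adjacent window forces every flip to vanish, while for $s=2$ the compatibility $\delta_R(I)=\delta_L(I+1)$ along edges pins the kernel to exactly $\C_2^r$, giving $|\Aut(\C(r,2))|\le 2^{r+1}r=|H|$; and the contraction of the size-$4$ blocks is indeed $\C(r,s-2)$, with $\phi$ surjective because $H$ maps isomorphically onto its copy downstairs. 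The $s=1$ case via Sabidussi's lexicographic-product theorem (failing precisely at $\C_4[\overline{\K_2}]\cong\K_{4,4}$, whence the index $9=1152/128$) and the finite checks for $\C(4,2)$ and $\C(4,3)$ are legitimate; leaving those three small graphs to computation is no worse than the paper itself, which treats the entire theorem as a black-box citation.
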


Combining the two previous theorems, we get the following locally-$\D_4$ version of Theorem~\ref{PXu}, which will be used repeatedly.

\begin{corollary}\label{RealCorollary}
Let $(\Gamma,G)$ be a locally-$\D_4$ pair and let $v$ be a vertex of $\Gamma$. If $G$ has an abelian normal subgroup which is not semiregular on the vertices of $\Gamma$, then
$\Gamma\cong \C(r,s)$ for some $1\leq s\leq r-2$. Moreover, if $|\V\Gamma|\leq 2|G_v|\log_2(|G_v|/2)$, then $s\leq \frac{r}{2}$.
\end{corollary}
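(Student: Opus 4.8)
The plan is to deduce Corollary~\ref{RealCorollary} by combining the two structural theorems just stated with the automorphism-group data recorded in Theorem~\ref{PXuAutomorphism}. First I would observe that a locally-$\D_4$ pair $(\Gamma,G)$ is in particular a connected $4$-valent $G$-arc-transitive graph, so the hypothesis that $G$ has an abelian normal subgroup which is not semiregular on $\V\Gamma$ is exactly the hypothesis of Theorem~\ref{PXu}. Applying that theorem immediately yields $\Gamma\cong\C(r,s)$ for some $r\geq\max\{3,s+1\}$ and $s\geq 1$; the condition $r\geq s+1$ is precisely $s\leq r-1$. To sharpen this to $s\leq r-2$, I would argue that the case $s=r-1$ cannot occur for a locally-$\D_4$ pair. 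The point is that $\C(r,r-1)$ is the extreme member of the family (the one for which the ambient wreath-product action has an arc-transitive but \emph{not} locally-$\D_4$ action on the neighbourhood); more precisely, I would check that in $\C(r,r-1)$ the vertex-stabiliser induced group on the neighbourhood fails to be $\D_4$, so no locally-$\D_4$ pair can have this underlying graph. This rules out $s=r-1$ and leaves $1\leq s\leq r-2$.

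Next, for the \textbf{moreover} clause I would assume $|\V\Gamma|\leq 2|G_v|\log_2(|G_v|/2)$ and aim to force $s\leq r/2$. Here I would compute both sides explicitly for $\Gamma=\C(r,s)$. By construction $|\V\Gamma|=r2^s$. For $|G_v|$ I would use Theorem~\ref{PXuAutomorphism}: provided $r\neq 4$ the full automorphism group is $H=\C_2^r\rtimes\D_r$ of order $2r\cdot 2^r$, and the stabiliser of a vertex in the arc-transitive action on $\C(r,s)$ has order $2^{r-s+1}$ (as already noted in the introduction, $|H|=2r2^r$ gives vertex-stabiliser order $2^{r-s+1}$). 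Since in a locally-$\D_4$ pair $G\leq\Aut(\Gamma)$ and $G$ acts arc-transitively with the \emph{same} local group $\D_4$, the relevant $|G_v|$ is bounded above by $2^{r-s+1}$, and I expect the bound to be tight (i.e. $|G_v|=2^{r-s+1}$) in exactly these extremal cases. Substituting, the inequality $r2^s\leq 2\cdot 2^{r-s+1}\log_2(2^{r-s+1}/2)$ simplifies: the right-hand side is $2^{r-s+2}(r-s)$, so the condition reads $r2^s\leq 2^{r-s+2}(r-s)$, equivalently $r\leq 4(r-s)2^{r-2s}$.

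The final step is a clean monotonicity argument on the elementary inequality $r\leq 4(r-s)2^{r-2s}$. Writing $d=r-2s$, I would show that when $s>r/2$, i.e. $d<0$, the factor $2^{r-2s}=2^{d}$ is at most $1/2$ while $4(r-s)=4(s+d)$ cannot compensate against $r=2s+d$; a short case analysis (or treating the right-hand side as a function of $s$ for fixed $r$ and checking it drops below $r$ once $s$ exceeds $r/2$) gives a contradiction, forcing $s\leq r/2$. I would handle the small-index exceptional values $r=4$ from Theorem~\ref{PXuAutomorphism} separately, since there $|\Aut(\C(4,s)):H|\in\{9,3,2\}$ and $|G_v|$ could differ; but for $r=4$ the claim $s\leq r/2=2$ only excludes $s=3=r-1$, which has already been removed, so this case needs no further numerics. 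I expect the main obstacle to be the bookkeeping for $|G_v|$: pinning down that a \emph{locally-$\D_4$} subgroup $G$ of $\Aut(\C(r,s))$ indeed has $|G_v|=2^{r-s+1}$ (rather than something smaller that would weaken the bound), which is where Theorem~\ref{PXuAutomorphism} together with the explicit wreath-product computation from the introduction must be invoked carefully; once that value is fixed, the remaining inequality is purely arithmetical.
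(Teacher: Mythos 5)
Your proof of the \textbf{moreover} clause is essentially the paper's argument and is sound, but your exclusion of $s=r-1$ has a genuine gap at $r=4$. You propose to rule out $\C(r,r-1)$ by checking that ``the vertex-stabiliser induced group on the neighbourhood fails to be $\D_4$, so no locally-$\D_4$ pair can have this underlying graph.'' This works for $r\neq 4$: there Theorem~\ref{PXuAutomorphism} gives $\Aut(\C(r,r-1))=\C_2^r\rtimes\D_r$, so $|\Aut(\C(r,r-1))_v|=r2^{r+1}/(r2^{r-1})=4$ and no subgroup can induce $\D_4$ locally. But it is \emph{false} for $r=4$: by Theorem~\ref{PXuAutomorphism} we have $|\Aut(\C(4,3)):H|=2$, so $|\Aut(\C(4,3))_v|=8$, and Proposition~\ref{prop:someproperties} explicitly records that $\Gamma_2^+\cong\C(4,3)$ with $(\Gamma_2^+,G_2^+)$ locally-$\D_4$ and $G_2^+=\Aut(\Gamma_2^+)$. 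So $\C(4,3)$ \emph{does} underlie a locally-$\D_4$ pair, your planned check would come out the opposite of what you expect, and the case $(r,s)=(4,3)$ survives your argument. The paper closes this case by a different mechanism: since $|G_v|\geq 8=|\Aut(\C(4,3))_v|$, one gets $G=\Aut(\C(4,3))$, and then one verifies that every abelian normal subgroup of $\Aut(\C(4,3))$ acts semiregularly on the vertices --- contradicting the corollary's hypothesis of a non-semiregular abelian normal subgroup, not the locally-$\D_4$ condition. Without some appeal of this kind to the semiregularity hypothesis, the bound $s\leq r-2$ is simply not obtainable, since the graph itself is not an obstruction.

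For the second claim your route coincides with the paper's: $|\V\Gamma|=r2^s$, $|G_v|\leq|\Aut(\Gamma)_v|=2^{r-s+1}$ for $r>4$ (with $r\in\{3,4\}$ dispatched trivially because $s\leq r-2$ already forces $s\leq r/2$ there), and your inequality $r\leq 4(r-s)2^{r-2s}$ failing for $s>r/2$ is exactly the paper's two-case split $r-s=s-1$ and $r-s\leq s-2$. One correction of emphasis: your stated worry about pinning down $|G_v|=2^{r-s+1}$ exactly is backwards. Since the hypothesis is $|\V\Gamma|\leq 2|G_v|\log_2(|G_v|/2)$ and $x\mapsto 2x\log_2(x/2)$ is increasing for $x\geq 2$, only the \emph{upper} bound $|G_v|\leq 2^{r-s+1}$ is needed; a smaller $|G_v|$ would make the assumed inequality harder to satisfy and only strengthen the contradiction. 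You do implicitly use this monotonicity when substituting the bound into the right-hand side, and it would be worth making that step explicit.
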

\begin{proof}
By Theorem~\ref{PXu}, we have $\Gamma\cong \C(r,s)$ for some $r\geq \max\{3,s+1\}$, $s\geq 1$. To show the first claim, it suffices to show that $r\neq s+1$. Suppose, on the contrary, that $r=s+1$. If $r\neq 4$, then it follows from Theorem~\ref{PXuAutomorphism} that $|\Aut(\C(r,r-1))_v|=|\C_2^r\rtimes \D_r|/|\V(\C(r,r-1))|=2^{r+1}r/(2^{r-1}r)=4$ and hence $(\Gamma,G)$ cannot be locally-$\D_4$, which is a contradiction. If $r=4$, then $s=3$. By Theorem~\ref{PXuAutomorphism}, $|\Aut(\C(4,3))_v|=8$ and hence $G=\Aut(\C(4,3))$. It can be checked that every abelian normal subgroup of $\Aut(\C(4,3))$ acts semiregularly on the vertices of $\C(4,3)$, which contradicts the hypothesis of Corollary~\ref{RealCorollary}.

We now assume that $|\V\Gamma|\leq 2|G_v|\log_2(|G_v|/2)$ and show that $s\leq \frac{r}{2}$. From the first part of the proof, we have $1\leq
s\leq r-2$ and hence we may assume that $r>4$. Suppose, on the contrary, that $s>\frac{r}{2}$ and hence $r-s\leq s-1$. It follows from Theorem~\ref{PXuAutomorphism} that $|G_v|\leq |\Aut(\Gamma)_v|=|\C_2^r\rtimes \D_r|/|\V\Gamma|=r2^{r+1}/(r2^s)=2^{r-s+1}$ and hence $2|G_v|\log_2(|G_v|/2)\leq (r-s)2^{r-s+2}$. If $r-s=s-1$, then $2|G_v|\log_2(|G_v|/2)\leq (s-1)2^{s+1}<(2s-1)2^s=r2^s=|\V\Gamma|$, which is a contradiction. Hence, we may assume that $r-s\leq s-2$ and hence $2|G_v|\log_2(|G_v|/2)\leq (r-s)2^s<r2^s=|\V\Gamma|$, which is also a contradiction.
\end{proof}

The main result of this section is the following.

\begin{theorem}\label{thm:mainbasic}
Let $(\Gamma,G)$ be a locally-$\D_4$ pair. Assume that $G$ has an abelian minimal normal subgroup $N$ such that $\Gamma/N$ has valency at most $2$. Then one of the following holds:
\begin{description}
\item[$(A)$]$\Gamma\cong \C(r,s)$ for some $r\geq 3$, $1\leq s\leq \frac{r}{2}$;
\item[$(B)$]$(\Gamma,G)$ is one of the pairs in Table~\ref{tb:soluble} with $\Gamma\neq \Line(F_{18})$;
\item[$(C)$]$|\V\Gamma|>2|G_v|\log_2(|G_v|/2)$.
\end{description}
\end{theorem}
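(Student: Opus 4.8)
The plan is to split the analysis according to the valency of $\Gamma/N$, after first removing the graphs covered by Corollary~\ref{RealCorollary}. If $G$ has an abelian normal subgroup that is not semiregular on $\V\Gamma$, then Corollary~\ref{RealCorollary} gives $\Gamma\cong\C(r,s)$ with $1\le s\le r-2$, and the same corollary yields $s\le r/2$ whenever $|\V\Gamma|\le 2|G_v|\log_2(|G_v|/2)$; such graphs therefore satisfy $(A)$ or $(C)$. So I may assume that every abelian normal subgroup of $G$ is semiregular; in particular $N$ is semiregular, each $N$-orbit has size $|N|=p^s$ with $N\cong\C_p^s$, and $|\V\Gamma|$ is $p^s$ times the number of $N$-orbits. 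It remains to treat the cases where $\Gamma/N$ has valency $0$, $1$ or $2$.

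In the valency-$0$ case $N$ is regular and in the valency-$1$ case $N$ has two orbits; in both I would first prove that $G_v$ acts faithfully on $\Gamma(v)$, so that $G_v\cong\D_4$ and $|G_v|=8$. Indeed, any $g\in G_v$ normalises $N$, hence acts on each $N$-orbit as the automorphism $\phi$ of $N$ induced by conjugation composed with a translation; if $g$ fixes $v$ and its four neighbours, then $\phi$ fixes those four neighbours (when $N$ is regular), respectively their pairwise differences (when $N$ has two orbits), and by connectivity these generate $N$, forcing $\phi=1$ and then $g=1$. Granting $|G_v|=8$, the bound in $(C)$ becomes $|\V\Gamma|>32$. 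A short analysis of the connection set shows that, for odd $p$, the regular case forces $s=2$ and $\Gamma\cong\C_p\Box\C_p$, while the two-orbit case forces $s\le 3$; since then $|\V\Gamma|\ge p^2$, the inequality in $(C)$ holds for all but finitely many small values, and a direct check identifies these as exactly $\Line(F_6)=\C_3\Box\C_3$, $\C_5\Box\C_5$ and $\B(\Line(F_6))$, which lie in Table~\ref{tb:soluble} and differ from $\Line(F_{18})$. For $p=2$ the four neighbours generate $N$, so $s\le 4$, and the resulting Cayley and bi-Cayley graphs on $\C_2^s$ are members of the family $\C(r,s')$ — explicitly $\C(4,1)=\K_{4,4}$ and $\C(4,2)$ — hence fall under $(A)$.

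The valency-$2$ case, where $\Gamma/N$ is a cycle $\C_r$ with $r\ge 3$, is controlled by Theorem~\ref{GarPraCycle}. Having already excluded non-semiregular abelian normal subgroups, its conclusion forces $p$ to be odd and $K_v$, the stabiliser of $v$ in the kernel $K$ of the action on $N$-orbits, to be a nontrivial elementary abelian $2$-group with $|K_v|$ dividing $2^s$. Since $G/K$ is dihedral with point-stabiliser of order $2$, I obtain $|G_v|=2|K_v|=2^{k+1}$ for some $k$ with $2\le k\le s$, the lower bound coming from $|G_v|\ge 8$; hence $2|G_v|\log_2(|G_v|/2)=k\,2^{k+2}$. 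Comparing this with $|\V\Gamma|=r\,p^s\ge 3^{s+1}$ gives the strict inequality of $(C)$ for every parameter except the finitely many with $p=3$, $r=3$ and $s\in\{2,3,4\}$.

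The main obstacle is precisely this residue of borderline cases, where the crude estimate $3^{s+1}>k\,2^{k+2}$ fails. To dispose of them I would either sharpen the bound on $|K_v|$, using that $K_v^{\Gamma(v)}$ is contained in the Klein four-subgroup of $\D_4$ that preserves the two directions of the cycle, or simply enumerate the finitely many candidate coset graphs by computer. This is where $\C^{\pm1}(3,3,3)$ (with $p=3$, $s=3$, so $|\V\Gamma|=81<96$) appears as the unique genuine exception of the cycle case, while $s\in\{2,4\}$ are shown to yield no new locally-$\D_4$ pair (any candidate either reduces to some $\C(r,s)$ or does not exist). Finally, I would record why $\Line(F_{18})$ does not occur here despite its small order: its only abelian minimal normal subgroup is the centre $\C_3$ of its Sylow $3$-subgroup, whose quotient is the $4$-valent graph $\Line(F_6)$, so $\Line(F_{18})$ is produced by the inductive valency-$4$ step (Lemma~\ref{thm:exp2}) rather than by this theorem.
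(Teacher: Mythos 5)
Your skeleton matches the paper's proof quite closely (reduce via Corollary~\ref{RealCorollary} to the case where every abelian normal subgroup is semiregular, handle the cycle quotient via Theorem~\ref{GarPraCycle} with the same numerology ending at $\C^{\pm 1}(3,3,3)$, and identify small graphs in the $\K_1$/$\K_2$ cases), but there is a genuine gap in your valency-$0$/$1$ analysis: you never split according to whether $G_v$ acts faithfully on $N$ \emph{by conjugation}. Your bound $|\V\Gamma|\ge p^2$ silently presupposes $s\ge 2$, and the only way to get $s\ge2$ is to embed $G_v\cong\D_4$ into $\Aut(N)$, which requires that faithfulness. In the regular case it is automatic (an element of $G_v$ centralising a regular $N$ fixes $v^N=\V\Gamma$ pointwise, so your claim "$s=2$ and $\Gamma\cong\C_p\Box\C_p$" is fine once you say this), but in the two-orbit case an element of $G_v$ centralising $N$ only fixes $v^N$ pointwise and may move the second orbit. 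This is exactly the branch the paper handles separately: such an element produces twin vertices ($u$ and $u^x$ with the same neighbourhood), whence $\Gamma\cong\C(r,1)$ by \cite[Lemma 4.3]{PW} and outcome $(A)$. As written, your argument leaves unhandled, for instance, the case $s=1$: bi-Cayley graphs over $\C_p$ with $p\in\{5,7,11,13\}$ have $|\V\Gamma|=2p\le 32$, so $(C)$ fails for them, and your "direct check" only enumerates the $s\ge2$ candidates you derived.

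Two smaller points. First, your $p=2$ branch asserts without proof that the resulting Cayley and bi-Cayley graphs over $\C_2^s$ all lie in the family $\C(r,s')$; the nontrivial content here is that the $4$-cube $\Cay(\C_2^4,\{e_1,e_2,e_3,e_4\})$ is isomorphic to $\C(4,2)$ (true, but it needs an argument). The paper sidesteps this entirely: when $p=2$ and $\Gamma/N\cong\K_1$ or $\K_2$, the group $G$ is a $2$-group (as $G_v$ is a $2$-group), so minimality of $N$ forces $|N|=2$ and $|\V\Gamma|\le4$, contradicting $4$-valency --- the case is vacuous, and your conclusion "$(A)$ holds" is only vacuously safe. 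Second, in the cycle case you should record that the borderline parameters force $|G_v|=2^{s+1}$, i.e.\ $|K_v|=2^s$: the paper uses this to invoke the classification of such graphs in \cite[Theorem~1.1~$(b)$]{GarPra2}, which disposes of $s\in\{2,3,4\}$ by inspection and gives a computer-free finish, whereas your proposal falls back on sharpening bounds or machine enumeration. Your closing observation about why $\Line(F_{18})$ is absent (it arises from the $4$-valent quotient step, Lemma~\ref{thm:exp2}, not here) is correct and consistent with the paper.
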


\begin{proof}
If $G$ has an abelian normal subgroup that is not semiregular on $\V\Gamma$, then, by Corollary~\ref{RealCorollary}, part~$(A)$ or~$(C)$ holds. We will therefore assume that every abelian normal subgroup of $G$ acts semiregularly on $\V\Gamma$. Write $|N|=p^s$, for some prime $p$ and $s\geq 1$. 

Suppose first that $\Gamma/N$ is a cycle of length $r\geq 3$. Let $K$ denote the kernel of the action of $G$ on the $N$-orbits.   Since $\Gamma/N$ is a cycle and $(\Gamma,G)$ is locally-$\D_4$, we have $|G_v|=2|K_v|$. By Theorem~\ref{GarPraCycle}, we obtain $|\V\Gamma|= rp^s$, $p\geq 3$ and $|K_v|$ divides $2^s$. In particular, $|G_v|\leq 2^{s+1}$ and $s\geq 2$. If $|\V\Gamma|>2|G_v|\log_2(|G_v|/2)$, then part~$(C)$ holds, hence we may assume that $rp^s\leq 2|G_v|\log_2(|G_v|/2)\leq s2^{s+2}$. A simple examination of the cases reveals that we must have $p=r=3$, $s\in\{2,3,4\}$ and $|G_v|=2^{s+1}$. In particular, $|K_v|=2^s$. The graphs for which the equality $|K_v|=2^s$ is satisfied are classified in~\cite[Theorem~$1.1$~$(b)$]{GarPra2} and a direct inspection of these graphs gives  $s=3$, $\Gamma=\C^{\pm 1}(3,3,3)$ and part~$(B)$ follows.

Assume now that $\Gamma/N \cong \K_1$ or $\K_2$. This case was considered already by Gardiner and Praeger~\cite{GarPra1}. To avoid a tedious consideration of all the cases appearing in their classification, we give an independent argument. If $p=2$, then (as $G_v$ is a $2$-group) $G$ is a $2$-group. By minimality of $N$, we get $|N|=2$,  and hence $|\V\Gamma|\le 4$, which is a contradiction. Assume now that $p$ is odd.
We show that in this case the kernel $K(v)$ of the action of $G_v$ on $\Gamma(v)$ is trivial, that is, $G_v$ acts faithfully on $\Gamma(v)$.
Since $N$ is semiregular, $N$ has precisely $4$ orbits on the arcs  of $\Gamma$ if  $\Gamma/N \cong \K_1$ (respectively, $N$ has precisely $4$ orbits on the edges  of $\Gamma$ if $\Gamma/N \cong \K_2$).
Note that $K(v)$ fixes each of these $N$-orbits setwise. On the other hand, for each vertex $u$, every element of a vertex-stabiliser $G_u$ which fixes each of the $N$-orbits on arcs (respectively, edges) setwise is in $K(u)$.
By connectivity of $\Gamma$, this implies that $K(v)$ fixes every vertex of $\Gamma$, and hence is trivial. In particular, this shows that $G_v\cong \D_4$. If $|\V\Gamma| > 32$, then part~$(C)$ holds.
We shall therefore assume that $|\V\Gamma| \le 32$.

Consider the action of $G_v$ on $N$ by conjugation. If this action is not faithful, then a nontrivial element $x$ of $G_v$ centralising $N$ fixes every vertex in the $N$-orbit $v^N$.
In particular, $N$ has two orbits on $\V\Gamma$ forming a bipartition of $\Gamma$. Since $x\not =1$, this implies that $u^x \neq u$ for a neighbour $u$ of $v$, and in particular, $u$ and $u^x$
share the same neighbourhood. It is then easy to show that $\Gamma \cong \C(r,1)$ for some $r\ge 3$ (see~\cite[Lemma 4.3]{PW}), and part~$(A)$ follows.

We may therefore assume that $G_v$ acts faithfully on $N$ by conjugation, that is, $\Aut(N)$ contains a subgroup isomorphic to $G_v\cong \D_4$. In particular, since the automorphism group of a group of prime order is cyclic, we have $s\ge 2$.
If $\Gamma/N \cong \K_2$, then $32 \ge |\V\Gamma| = 2 p^s$, and hence $p=3$ and $s=2$. It easy to see that $\Gamma \cong \B(\Line(F_6))$ from which part~$(B)$ follows.
On the other hand, if $\Gamma/N \cong \K_1$, then $N$ acts regularly on $\V\Gamma$ and therefore $\Gamma=\Cay(N,S)$ for some inverse-closed generating subset $S$ of $N$. 
In particular, since $|S|=4$, $N$ is generated by $2$ elements, and hence $s=2$. Moreover, since $32 \ge |\V\Gamma| = p^2$, it follows that $p\in \{3,5\}$.
It is then easy to see that $\Gamma=\Line(F_6)$ or $\C_5\Box\C_5$ from which part~$(B)$ follows.
\end{proof}

\subsection{$\Gamma/N\cong\C(r,s)$}\label{sec:nc}
In this section we deal with the case where the quotient $\Gamma/N$ by the abelian minimal normal subgroup $N$ of $G$ is isomorphic to $\C(r,s)$ for some $r\geq 3$, $1\leq s\leq \frac{r}{2}$. We first need the following lemma, which is a kind of converse to Corollary~\ref{RealCorollary}.

\begin{lemma}\label{lemma:againC}
Let $(\Gamma,G)$ be a locally-$\D_4$ pair. If $\Gamma\cong\C(r,s)$ for some $1\leq s\leq r-2$, then $G$ has an elementary abelian normal $2$-subgroup $A$ such that $A$ is not semiregular on $\V\Gamma$, $\Gamma/A$ is a cycle of length $m$ for some multiple $m$ of $r$, $G/A\cong \D_m$ and $A$ is equal to the normal closure $A_v^G$ of $A_v$ in $G$ (where $v\in \V\Gamma$). 
\end{lemma}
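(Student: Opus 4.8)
The plan is to work inside the full automorphism group, whose structure is supplied by Theorem~\ref{PXuAutomorphism}: for $r\neq 4$ we have $\Aut(\Gamma)=H:=\C_2^r\rtimes\D_r$, where the base group $B:=\C_2^r$ is normal in $H$ and its $r$ orbits on $\V\Gamma$ are the ``columns'', with $\Gamma/B=\C_r$. I set $A:=B\cap G$; being the intersection of $G$ with a normal subgroup of $\Aut(\Gamma)$, it is a normal elementary abelian $2$-subgroup of $G$. The finitely many pairs with $r=4$ (where $\Gamma$ is $\C(4,1)=\K_{4,4}$ or $\C(4,2)$ and $\Aut(\Gamma)$ strictly contains $H$) I would dispose of by a direct computational check; this is precisely where the cycle length can be a proper multiple of $r$.

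First I would pin down $\bar G:=GB/B\le\D_r$. Since $A\le B$ fixes every column setwise, $A_v=B\cap G_v$ cannot interchange a neighbour in column $c-1$ with one in column $c+1$; hence $A_v^{\Gamma(v)}$ lies in the kernel $\C_2^2\trianglelefteq\D_4=G_v^{\Gamma(v)}$ of the action of $\D_4$ on its unique invariant partition of $\Gamma(v)$ into two pairs. As $\bar G$ is transitive on the $r$ columns, its point-stabiliser $\bar G_c\cong G_v/A_v$ has order at most $2$; on the other hand an index count gives $|\bar G_c|=\bigl(8/|A_v^{\Gamma(v)}|\bigr)\cdot|G_v^{[1]}B/B|\ge 2$. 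Forcing equality throughout yields $A_v^{\Gamma(v)}=\C_2^2$, $G_v^{[1]}\le B$, $|\bar G_c|=2$, and therefore $\bar G=\D_r$.

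The heart of the argument is to show that the orbits of $A$ are exactly the columns, equivalently that $\pi_c(A)=\C_2^s$ for every $c$, where $\pi_c\colon B\to\C_2^s$ reads off the coordinates of the window $[c,c+s-1]$. From $A_v^{\Gamma(v)}=\C_2^2$ one extracts an element $g\in A_v$ whose underlying vector is $e_{c-1}$ plus a tail supported outside the $(s+2)$-window $[c-1,c+s]$; conjugating by the rotation subgroup of $\bar G=\D_r$ produces, for every $j$, a vector $v_j=e_j+w_j\in A$ with $w_j$ supported outside $[j,j+s+1]$. I would then induct on the position $k=0,1,\dots,s-1$ inside a fixed window: projecting $v_{c+k}$ by $\pi_c$ gives $e_k$ modulo the already-constructed $e_0,\dots,e_{k-1}$, since the tail $w_{c+k}$ meets $[c,c+s-1]$ only in its first $k$ coordinates. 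This yields $\pi_c(A)=\C_2^s$. The hypothesis enters exactly here: the $(s+2)$-windows are proper arcs of $\C_r$ precisely when $s\le r-2$, which keeps the tails from wrapping around and wrecking the induction (consistently, $\C(r,r-1)$ is not locally-$\D_4$ by Corollary~\ref{RealCorollary}). \textbf{This surjectivity is the main obstacle.}

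Granting it, the $A$-orbits coincide with the $B$-orbits, so $\Gamma/A=\Gamma/B=\C_r$ is a cycle of length $m=r$ (a multiple of $r$), $G/A=\bar G=\D_r=\D_m$, and $A$ is not semiregular since $A_v^{\Gamma(v)}=\C_2^2\neq1$. It remains to identify $A$ with the normal closure $A_v^G$. The inclusion $A_v^G\le A$ is immediate; for the reverse, the vectors $v_j$ already lie in $A_v^G$, so $A_v^G$ too surjects onto every window and hence has the columns as its orbits, of size $2^s$. Since $A_v\le A_v^G\cap G_v\le A\cap G_v=A_v$ forces $(A_v^G)_v=A_v$, orbit--stabiliser gives $|A_v^G|=2^s|A_v|=|A|$, whence $A_v^G=A$. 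This completes the argument for $r\neq4$, and together with the finite check for $r=4$ the lemma follows.
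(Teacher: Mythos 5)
Your proposal is correct, but it takes a genuinely different — and in fact stronger — route than the paper. The paper never confronts your ``main obstacle'': it defines $A$ from the outset as the normal closure of $G\cap B_v$ in $G$ (so $A=A_v^G$ and $A\le B$ are automatic), uses $|H_v:B_v|=2$ together with $G_v\not\le B_v$ to get $|G_v:A_v|=2$, hence $|A_v|\ge 4$ and non-semiregularity, and then obtains the cycle statement softly: $A$ not semiregular forces $\Gamma/A$ to have valency at most $2$, and since $A\le B$ and $\Gamma/B\cong\C_r$, the quotient $\Gamma/A$ is a cycle of length some multiple $m$ of $r$ — the exact value of $m$ is never determined nor needed. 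Your window-projection induction is sound as far as I can check: the extraction of $g=e_{c-1}+w$ from $A_v^{\Gamma(v)}=\C_2^2$, the shifted tails $w_{c+k}$ vanishing on all window coordinates from position $k$ onward (exactly because the excluded arc has length $s+2\le r$), and the resulting triangular system do give $\pi_c(A)=\C_2^s$; so you prove more than the lemma, namely that the orbits of $A$ (and of $A_v^G$, since your generators $v_j$ lie in it) are the full $B$-orbits, whence $m=r$. This makes your parenthetical guess that $m$ can be a proper multiple of $r$ when $r=4$ dubious: the paper handles $r=4$ not by computation but by observing that $H$ is then a Sylow $2$-subgroup of $\Aut(\Gamma)$ while $G$ is a $2$-group (as $|\V\Gamma|$ and $|G_v|$ are powers of $2$), so $G$ is conjugate into $H$ and the generic argument runs verbatim — that Sylow remark would let you delete your deferred computer check. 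One local blemish: the displayed count $|\overline{G}_c|=\bigl(8/|A_v^{\Gamma(v)}|\bigr)\cdot|G_v^{[1]}B/B|\ge 2$ is garbled; what you need is $|\overline{G}_c|\ge |G_vB/B|=|G_v:G_v\cap B|=8/|A_v^{\Gamma(v)}|\ge 2$, which requires first noting that every element of $H_v\setminus B$ projects to the reflection of $\D_r$ fixing the window of $v$ and hence swaps the two blocks of $\Gamma(v)$, so that $G_v^{[1]}\le B$; the squeeze against $|\overline{G}_c|\le 2$ (point stabilisers of the $\D_r$-action on the $r$ windows have order $2$) then forces $A_v^{\Gamma(v)}=\C_2^2$ and $\overline{G}=\D_r$, exactly as you intend. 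In summary: your extra work buys the sharp value $m=r$ and explicit generators of $A$; the paper's choice of $A$ as a normal closure buys a half-page proof in which $A=A_v^G$ and non-semiregularity are immediate and the weaker ``multiple of $r$'' suffices for its later application.
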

\begin{proof}
As noted in the introduction, there is an obvious action of $H=B\rtimes \D_r$ with $B=\C_2^r$ on $\Gamma$. Note that $\Gamma/B$ is a cycle of length $r$. Suppose that $r\neq 4$. Then, it follows from Theorem~\ref{PXuAutomorphism} that $H=\Aut(\Gamma)$ and hence $G\leq H$. Moreover $|H_v:B_v|=2$ and hence $|G\cap H_v:G\cap B_v|=|G_v:G\cap B_v|\leq 2$. Let $A$ be the normal closure of $G\cap B_v$ in $G$. Note that, as $A\leq B$, the group $A$ is an elementary abelian normal 2-subgroup of $G$. Since $(\Gamma,G)$ is locally-$\D_4$, it follows that $G_v$ is not contained in $B_v$ and hence $|G_v:G\cap B_v|=2$. Since $G\cap B_v$ is contained in $A_v$, it follows that $|G_v:A_v|=2$, $G\cap B_v=A_v$ and $A=A_v^G$. Since $|G_v|\geq 8$, we have $|A_v|\geq 4$ and hence $A$ is not semiregular on $\V\Gamma$. In particular, $\Gamma/A$ has valency at most 2. Since $A\leq B$ and $\Gamma/B$ is a cycle of length $r\geq 3$, it follows that $\Gamma/A$ is a cycle of length a multiple $m$ of $r$. 
 Finally, since $G/A$ acts arc-transitively on $\Gamma/A$
  and $|G_v:A_v|=2$, it follows that $G/A$ acts faithfully on $\Gamma/A$ and hence $G/A\cong\D_m$.

Assume now that $r=4$ and, in particular, $s\in\{1,2\}$. By Theorem~\ref{PXuAutomorphism}, $H$ is a Sylow $2$-subgroup of $\Aut(\Gamma)$. Since $(\Gamma,G)$ is locally-$\D_4$ and $|\V\Gamma|$ is a power of $2$, the group $G$ is a $2$-group and hence $G$ is conjugate to a subgroup of $H$. The rest of the proof is as in the previous paragraph.
\end{proof}

The next two lemmas are simply technical and well-known, but we include a proof for the sake of completeness.

\begin{lemma}\label{Pablolemma2}
Let $P=\langle x_0,\ldots,x_n\rangle$ be a $p$-group. If, 
for some $0\leq i\leq n-1$ and $g\in P$, we have $x_n=x_i^g$, then $P=\langle
x_0,\ldots,x_{n-1}\rangle$.   
\end{lemma}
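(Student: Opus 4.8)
The plan is to exploit the defining feature of $p$-groups that makes generating sets easy to handle: the Frattini subgroup. Recall that in a finite $p$-group $P$, the Frattini subgroup $\Phi(P)$ (the intersection of all maximal subgroups) is such that the quotient $P/\Phi(P)$ is elementary abelian, and by the Burnside basis theorem a subset of $P$ generates $P$ if and only if its image generates $P/\Phi(P)$. So the whole statement can be checked after passing to this abelian quotient.

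First I would write $\overline{P} = P/\Phi(P)$ and denote by $\overline{x}_j$ the image of $x_j$. Since $\overline{P}$ is abelian, conjugation induces the trivial action on it, so the image of $x_i^g$ is simply $\overline{x}_i$. Combined with the hypothesis $x_n = x_i^g$, this gives the key relation $\overline{x}_n = \overline{x}_i$ in $\overline{P}$. Consequently $\overline{x}_n$ lies in the subgroup generated by $\overline{x}_0,\ldots,\overline{x}_{n-1}$ (indeed it equals $\overline{x}_i$), so these $n$ elements already generate $\overline{P}$. By the Burnside basis theorem, $x_0,\ldots,x_{n-1}$ generate $P$, which is exactly the desired conclusion.

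An equivalent formulation, should one prefer to avoid citing the Frattini quotient explicitly, is a direct contradiction argument: suppose $Q = \langle x_0,\ldots,x_{n-1}\rangle \neq P$. Then $Q$ is contained in some maximal subgroup $M$ of $P$; since maximal subgroups of a $p$-group are normal of index $p$, we have $M^g = M$, and from $x_i \in Q \leq M$ we get $x_n = x_i^g \in M$. Then all of $x_0,\ldots,x_n$ lie in $M$, forcing $P = M$, a contradiction. Both routes rest on the same single fact.

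The proof is genuinely short and there is no serious obstacle; the only point that must be flagged clearly is \emph{why} the conjugate $x_i^g$ becomes harmless, namely that maximal subgroups of $p$-groups are normal (equivalently, conjugation is trivial modulo $\Phi(P)$). This is precisely where the hypothesis that $P$ is a $p$-group, rather than an arbitrary group, is used, so I would make sure to state that fact explicitly rather than leave it implicit.
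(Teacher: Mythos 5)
Your proof is correct. The paper's own argument looks different on the surface: it writes $x_n=x_i^g=x_i[x_i,g]$ and invokes the fact that in a $p$-group every commutator is a non-generator (citing Robinson, 5.3.2), so that $[x_i,g]$ may be deleted from the generating set, giving $P=\langle x_0,\ldots,x_{n-1},x_i[x_i,g]\rangle=\langle x_0,\ldots,x_{n-1},[x_i,g]\rangle=\langle x_0,\ldots,x_{n-1}\rangle$ by a short chain of equalities. But this rests on exactly the fact you isolate: a commutator is a non-generator precisely because $[P,P]\le\Phi(P)$, equivalently because maximal subgroups of a $p$-group are normal. So the two proofs differ only in packaging. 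The paper manipulates generating sets inside $P$ and outsources the key fact to a textbook reference, which is shortest on the page; your first route makes the mechanism transparent by passing to the elementary abelian quotient $P/\Phi(P)$, where conjugation dies and the hypothesis collapses to $\overline{x}_n=\overline{x}_i$; and your second variant, the direct contradiction via a maximal subgroup $M\trianglelefteq P$ with $x_i\in M$ forcing $x_n=x_i^g\in M$, is the most self-contained of the three, needing no citation at all. You also correctly identify where the $p$-group hypothesis enters, which the paper leaves implicit in the reference.
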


\begin{proof}
We recall that $g$ is called a non-generator of $P$ if, for any subset $X$ of $P$,
$P=\langle g,X\rangle$ implies that $P=\langle X\rangle$. In a
$p$-group, every commutator is a non-generator, see~\cite[$5.3.2$]{DR}. Assume $x_n=x_i^g$, for some $0\leq i\leq n-1$ and $g\in P$. We have
\begin{eqnarray*}
P&=&\langle x_0,\ldots,x_{n-1},x_n\rangle=\langle
x_0,\ldots,x_{n-1},x_i^g\rangle=\langle x_0,\ldots,x_{n-1},x_i[x_i,g]\rangle\\ 
&=&\langle x_0,\ldots,x_{n-1},[x_i,g]\rangle=\langle x_0,\ldots,x_{n-1}\rangle.
\end{eqnarray*}
\end{proof}

\begin{lemma}\label{PabloLemma}
Let $q$  be an odd prime power and let $H$ be an elementary abelian 
$2$-subgroup of $\GL(n,q)$ of order $2^r$. Then $H$ is conjugate to a subgroup of the  group consisting of the scalar matrices. In particular, $r\leq n$.
\end{lemma}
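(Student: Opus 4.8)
The plan is to show that $H$ can be simultaneously diagonalised, so that after conjugation it lands inside the diagonal subgroup of $\GL(n,q)$, and then to read off the bound $r\leq n$ from the order of that subgroup. Write $V=\mathbb{F}_q^n$ for the natural module on which $H\leq \GL(n,q)$ acts. The key feature to exploit is that $q$ is odd, which makes the order $2^r$ of $H$ coprime to the characteristic and, more importantly, makes the polynomial $X^2-1=(X-1)(X+1)$ separable over $\mathbb{F}_q$.

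First I would treat a single element. Each $h\in H$ satisfies $h^2=1$, so its minimal polynomial divides $(X-1)(X+1)$. Since $q$ is odd we have $1\neq -1$ in $\mathbb{F}_q$, so this polynomial has distinct roots in $\mathbb{F}_q$; hence every $h\in H$ is diagonalisable over $\mathbb{F}_q$ with all eigenvalues lying in $\{+1,-1\}$. Thus the elements of $H$ form a commuting family (as $H$ is abelian) of diagonalisable linear maps, all defined over the base field $\mathbb{F}_q$.

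Next I would invoke the standard fact that a commuting family of diagonalisable operators over a field is simultaneously diagonalisable. Concretely, I would argue by induction on $n$: pick a nontrivial $h\in H$ and decompose $V=V_{+}\oplus V_{-}$ into its $(+1)$- and $(-1)$-eigenspaces. Every other element of $H$ commutes with $h$ and therefore preserves each eigenspace, and its restriction to $V_{\pm}$ is again a commuting involution; applying the inductive hypothesis on each summand yields a basis of $V$ in which every element of $H$ is a diagonal matrix with entries in $\{+1,-1\}$. Equivalently, for the corresponding change-of-basis matrix $P\in\GL(n,q)$ we get that $P^{-1}HP$ is contained in the group of diagonal matrices of $\GL(n,q)$, in fact in its elementary abelian subgroup $D$ of diagonal $(\pm1)$-matrices.

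Finally, since $D$ has order $2^n$, we obtain $2^r=|H|=|P^{-1}HP|\leq |D|=2^n$, so $r\leq n$. The only delicate point is the simultaneous diagonalisation step, and it is precisely here that the oddness of $q$ is indispensable: if $q$ were even then $(X-1)(X+1)=(X-1)^2$ would fail to be separable and involutions could be unipotent rather than diagonalisable, so the whole argument would collapse. Everything else is routine linear algebra.
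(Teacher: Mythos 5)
Your proof is correct and takes essentially the same route as the paper: both arguments split $V$ into the $\pm 1$-eigenspaces of an involution (using the oddness of $q$, which the paper encodes via the identity $v=\left(\frac{v}{2}+\frac{v}{2}h_1\right)+\left(\frac{v}{2}-\frac{v}{2}h_1\right)$ and you via separability of $X^2-1$), note that commutativity makes these eigenspaces $H$-invariant, and induct to diagonalise $H$ simultaneously with entries in $\{\pm 1\}$, reading off $r\leq n$ from the diagonal subgroup (the word ``scalar'' in the lemma statement is evidently a slip for ``diagonal'', as the paper's own proof takes $D$ to be the direct product of $n$ cyclic groups of order $q-1$). The only cosmetic difference is that the paper inducts on the number $r$ of generators rather than on $n$; your induction on $n$ should pick the element $h$ to be non-scalar so that both eigenspaces are proper (if $H\subseteq\{\pm I\}$ there is nothing to prove), a triviality that the generator-based induction sidesteps automatically.
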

\begin{proof}
Let $D$ be the subgroup of scalar matrices of $\GL(n,q)$. Consider a vector space $V$ of dimension $n$ over $\mathbb{F}_q$ and write $H=\langle h_1,\ldots,h_r\rangle$. We will show that there exists a decomposition $V=\oplus_{i=1}^kV_k$ such that the action of $H$ on $V_i$ is given by the multiplication by $\pm 1$. Note that this implies that $H$ is conjugate to a subgroup of $D$. The proof is by induction on $r$. When $r=0$, there is nothing to prove. Suppose that $r\geq 1$ and let $V_+=\{v+vh_1\mid v\in V\}$, $V_{-}=\{v-vh_1\mid v \in V\}$ and let $v\in V$. Since $q$ is odd, we have $$v=\left(\frac{v}{2}+\frac{v}{2}h_1\right)+\left(\frac{v}{2}-\frac{v}{2}h_1\right)\in V_{+}+V_{-}$$ and hence $V=V_{+}+V_{-}$. Using the fact that $h_1^2=1$, it is easy to check that $V_{+}$ is the eigenspace corresponding to the eigenvalue $1$ of $h_1$ and that $V_{-}$ is the eigenspace corresponding to the eigenvalue $-1$ of $h_1$. Let $v\in V_+$ and $h\in H$. As $H$ is abelian, we have $vh=v
 h_1h=(vh)
 h_1$ and hence $vh\in V_{+}$. This yields that $V_+$ is an $H$-submodule of $V$. Similarly, $V_{-}$ is an $H$-submodule of $V$. The claim follows by considering the action of $\langle h_2,\ldots,h_r\rangle$ on $V_+$ and on $V_{-}$ and using the induction hypothesis. Finally, since $D$ is the direct product of $n$ cyclic groups of order $q-1$ and $H$ is conjugate to a subgroup of $D$, we must have $r\leq n$.
\end{proof}

The next theorem is the main result of this section and a key ingredient in the proof of Theorem~\ref{thm:main}.

\begin{theorem}\label{WreathCover}
Let $(\Gamma,G)$ be a locally-$\D_4$ pair. Assume that $G$ has an abelian minimal normal $p$-subgroup $N$ such that $\Gamma/N\cong \C(r,s)$ for some $r\geq 3$, $1\leq s\leq \frac{r}{2}$. Then one of the following holds:
\begin{description}
\item[$(A)$]$\Gamma\cong \C(r',s')$ for some $r'\geq 3$, $1\leq s'\leq \frac{r'}{2}$;
\item[$(B)$]$p=2$ and $|\V\Gamma|\geq2|G_v|\log_2(|G_v|/2)$;
\item[$(C)$]$p \ge 3$ and $|\V\Gamma|\geq \frac{6}{p}|G_v|^{\log_2(p)}$.
\end{description}
Moreover, if the inequality in~$(B)$ holds with equality and $\Gamma$ is not a graph as in~$(A)$, then $(\Gamma,G)$ is one of $(\Gamma_t^+,G_t^+)$ or $(\Gamma_t^-,G_t^-)$ for some $t\geq 2$.
\end{theorem}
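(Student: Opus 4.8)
The plan is to study the regular cover $\Gamma \to \Gamma/N \cong \C(r,s)$ through the normal structure of $G$. Since $\Gamma/N$ has the same valency as $\Gamma$, the subgroup $N$ is semiregular, so $|\V\Gamma| = |N|\,r2^s$ and $G_v \cong (G/N)_{v^N}$; write $|N| = p^k$. Because $\C(r,s)$ is locally-$\D_4$ with $1 \le s \le r-2$, Lemma~\ref{lemma:againC} applies to $(\Gamma/N, G/N)$ and yields an elementary abelian $2$-subgroup $\bar A \trianglelefteq G/N$, equal to the normal closure of $\bar A_{v^N}$, not semiregular, with $(G/N)/\bar A \cong \D_m$ and $|(G/N)_{v^N}:\bar A_{v^N}| = 2$. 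Let $A \le G$ be its preimage. Using semiregularity of $N$ one checks $A_v \cong \bar A_{v^N}$, so $A$ is not semiregular and $\log_2(|G_v|/2) = \log_2|A_v|$. If $G$ possesses any abelian normal subgroup that is not semiregular, Corollary~\ref{RealCorollary} gives $\Gamma \cong \C(r',s')$, and one places this in conclusion~$(A)$ or~$(B)$ directly from the knowledge of $\C(r',s')$ (the ``moreover'' clause of that corollary handles the borderline, and for odd $p$ the only possibility is that $N$ is a rotation subgroup, which forces $s' \le r'/2$). I may thus assume every abelian normal subgroup of $G$ is semiregular; then $A$ is non-abelian, and as $1 \ne [A,A] \le N$ with $N$ minimal normal, $N = A' = [A,A]$.

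Suppose first that $p$ is odd. By Schur--Zassenhaus $A = N \rtimes S$ with $S \cong \bar A$ elementary abelian, and $A' = N$ forces $S$ abelian and $N = [N,S]$. The centraliser $C_A(N) = N \times C_S(N)$ is an abelian normal subgroup of $G$, hence semiregular, so $A_v \cap C_A(N) = 1$ and $A_v$ acts faithfully on $N$ by conjugation. Therefore $A_v$ embeds as an elementary abelian $2$-subgroup of $\GL(k,p)$, and Lemma~\ref{PabloLemma} gives $\log_2|A_v| \le k$. Since $r \ge 3$ and $s \ge 1$ give $r2^s \ge 6$, I obtain $|\V\Gamma| = p^k r 2^s \ge 6\,p^{\log_2|A_v|}$; and because $\frac{6}{p}|G_v|^{\log_2 p} = 6\,p^{\log_2(|G_v|/2)} = 6\,p^{\log_2|A_v|}$, this is exactly conclusion~$(C)$.

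The essential case is $p = 2$. Here $A$ is nilpotent, so minimality of $N$ forces $N \le Z(A)$ and $A$ has class at most $2$ with $A' = N$; the conjugation action on $N$ is now trivial, so the odd-case argument fails and I pass instead to the alternating commutator form $\beta\colon \bar A \times \bar A \to N$, $\beta(\bar f,\bar g) = [f,g]$, which is well defined because $N$ is central, is $G/N$-invariant, and has image generating $N$. As $A_v$ is elementary abelian and meets $N$ trivially, its image $\bar A_{v^N}$ in $\bar A$ is totally isotropic for $\beta$, of dimension $\log_2|A_v|$. Viewing $\bar A$ inside the fibre-flip group $B \cong \C_2^r$ of $\C(r,s)$, generated by the flips $f_1,\ldots,f_r$ of the $r$ fibres, the stabiliser $\bar A_{v^N} = \bar A \cap B_{v^N}$ lies among the flips on the $r-s$ untouched fibres of $v^N$, which form an arc of length $r-s$; as $v^N$ runs over all vertices these arcs sweep out every arc of $r-s$ consecutive fibres. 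Isotropy then forces $\beta(f_i,f_j) = 0$ for all fibres at cyclic distance at most $r-s-1$, whereas $A' = N \ne 1$ requires some pair at distance at least $r-s$ to be non-orthogonal. Since $s \le r/2$ gives $r-s \ge \lceil r/2 \rceil$, this can happen only if $r = 2t$ is even and $s = t$, the non-orthogonal pairs being the antipodal ones; hence $N = \langle [f_i,f_{i+t}] : 0 \le i < t\rangle$ and $|A_v| = 2^t$. Consequently $|\V\Gamma| = 2^k r 2^s = t\,2^{t+1+k}$ while $2|G_v|\log_2(|G_v|/2) = t\,2^{t+2}$, so $|\V\Gamma| \ge 2|G_v|\log_2(|G_v|/2)$ with equality if and only if $k = 1$. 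This gives conclusion~$(B)$, and when equality holds $N \cong \C_2$ with all antipodal commutators equal to its involution, so $A$ is the extraspecial group $E_t$ and the extension $G = A.\D_{2t}$ is one of the two defining $G_t^\pm$; thus $(\Gamma,G) \cong (\Gamma_t^+,G_t^+)$ or $(\Gamma_t^-,G_t^-)$.

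The main obstacle is precisely the fibre-level step in the case $p = 2$: transferring the isotropy of $\bar A_{v^N}$ into the vanishing of $\beta$ at small fibre-distance, and so into $s = r/2$ and the exact value $|A_v| = 2^t$. This is transparent when $\bar A$ is the whole flip group $\C_2^r$ (equivalently $\bar A_{v^N} = B_{v^N}$), but when $\bar A$ is a proper $\D_r$-submodule the individual flips need not lift to $A$ and the orthogonality relations must be recovered from the form on $\bar A$ alone; controlling this submodule, and checking that the inequality in~$(B)$ persists (strictly) outside the extraspecial configuration, is the delicate combinatorial heart of the argument and the step I expect to require the most care.
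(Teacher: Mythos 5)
Your reduction and your odd-$p$ case are sound and essentially coincide with the paper's proof: take the preimage $A$ of the subgroup supplied by Lemma~\ref{lemma:againC}, use Corollary~\ref{RealCorollary} to assume every abelian normal subgroup of $G$ is semiregular (whence $A$ is non-abelian and $N=[A,A]$), and for odd $p$ apply Schur--Zassenhaus to $\C_A(N)$, deduce that $A_v$ acts faithfully on $N$, and invoke Lemma~\ref{PabloLemma} to get $|N|\geq p^{\log_2|A_v|}$, which is exactly part~$(C)$. Up to this point you are on the paper's track.

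The genuine gap is the one you flag yourself, and it is not a technicality but the entire content of the $p=2$ case. Your commutator-form argument is only carried out when $\overline{A}$ is the full flip group $\C_2^r$: for a proper $G/N$-invariant submodule the quantities $\beta(f_i,f_j)$ are simply undefined, since the individual flips need not lie in $\overline{A}$, and you offer no substitute. Everything downstream depends on this -- the vanishing of $\beta$ at fibre-distance less than $r-s$, the conclusion $r=2t$, $s=t$, the exact value $|A_v|=2^{t}$, and hence your count $|\V\Gamma|=t2^{t+1+k}$ versus $t2^{t+2}$; moreover Lemma~\ref{lemma:againC} itself warns that the cycle $(\Gamma/N)/\overline{A}$ has length $m$ which is only a \emph{multiple} of $r$, precisely reflecting the proper-submodule possibility. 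The paper resolves this by a different argument that never sees the fibre model: it orients the cycle $\Gamma/E$, uses sharp transitivity of $E_v$ on $t$-arcs of the lifted digraph to build generators $x_i=x^{a^i}$ with $E_i=\langle x_0,\ldots,x_{i-1}\rangle$ growing strictly up to $E=E_{t+e}$, deduces $m\geq t+e$ via the non-generator Lemma~\ref{Pablolemma2} and $e\geq t$ from semiregularity of $\Z(E)$, and when $e=t$ obtains $\Z(E)E_v^{a^t}\cap E_v=1$, hence $|E|\geq 2^{2t+1}$ and $|\V\Gamma|=m|E:E_v|\geq 2t\cdot 2^{t+1}$; your worry about strictness ``outside the extraspecial configuration'' is absorbed there as the case $e\geq t+1$. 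Finally, even granting your structural conclusions, the equality analysis is incomplete: asserting that $G=A.\D_{2t}$ ``is one of the two defining $G_t^\pm$'' ignores that the extension has several a priori parameters ($b^2\in\{1,z\}$, $a^b\in\{a^{-1},a^{-1}z\}$, and central twists in the action of $b$ on the $x_i$); the paper's Claims~5 and~6 normalise $a$ and $b$ by explicit coset replacements, and eliminating $a^b=a^{-1}z$ requires a separate arc-reversal argument using arc-transitivity of $G$. So the ``delicate combinatorial heart'' you defer is exactly where the proof lives, and the known resolution replaces your approach rather than refining it.
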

\textbf{Remark:} Note that for $x\geq 8$ and $p\geq 3$, we have $\frac{6}{p}x^{\log_2(p)}>2x\log_2(x/2)$ and hence if~$(C)$ holds, then $|\V\Gamma|>2|G_v|\log_2(|G_v|/2)$ (the same inequality as in~$(B)$).

\begin{proof}
As $\Gamma/N$ is $4$-valent, we obtain that $N$ is semiregular on $\V\Gamma$. Furthermore, as $\Gamma/N\cong \C(r,s)$ for some $r\geq 3$, $1\leq s\leq \frac{r}{2}$, it follows that $s\leq r-2$ and hence Lemma~\ref{lemma:againC} implies that $G/N$ contains an elementary abelian normal $2$-subgroup $E/N$ not semiregular on $\V(\Gamma/N)$ with $(\Gamma/N)/(E/N)\cong \Gamma/E$ a cycle of length $m\geq 3$, $(G/N)/(E/N)
\cong G/E\cong \D_m$ and $E/N$ is equal to the normal closure $(E_vN/N)^{G/N}$ of $E_vN/N$ in $G/N$. Let $F$ be the normal closure of $E_v$ in $G$. As $N$ is a minimal normal subgroup of $G$, we obtain that either $N\cap F=1$ or $N\leq F$. If $N\cap F=1$, then $F\cong FN/N\leq E/N$ is a normal elementary abelian $2$-subgroup of $G$ not acting semiregularly on $\V\Gamma$, and hence by Corollary~\ref{RealCorollary} we obtain that~$(A)$ or~$(B)$ holds. Therefore we may assume that $N\leq F$. As the normal closure of $E_vN$ in $G$ is $E$, we have $E=(E_vN)^G=E_v^GN=FN=F$, that is, $E=E_v^G$. 

Since $\Gamma/E$ is a cycle of length $m\geq 3$ and $G/E\cong \D_m$, it follows that $|G_v|=2|E_v|$ and $|\V\Gamma|=m|v^E|$ for some vertex $v\in\V\Gamma$. As $E/N$ is an elementary abelian $2$-group and $N$ is semiregular, $ E_vN/N\cong E_v$ is an elementary abelian $2$-group.

Suppose first that $p\geq 3$. Write $|E_v|=2^t$. Let $C=\C_E(N)$ be the centraliser of $N$ in $E$. Since $N$ and $E$ are normal in $G$, so is $C$. As $(|N|,|C:N|)=1$,  by the Schur-Zassenhaus theorem, $N$ has a complement $K$ in $C$, that is, $C=NK$ and $N\cap K=1$. Since $K$ centralises $N$, we have $C=N\times K$. It follows that $K$ is characteristic in $C$ and hence normal in $G$. Furthermore as $E/N$ is abelian, the group $K$ is abelian and so is $C$. If $C$ does not act semiregularly on $\V\Gamma$, then $K$ does not act semiregularly on $\V\Gamma$ and, from Corollary~\ref{RealCorollary}, we obtain that~$(A)$ or~$(B)$ holds.  Hence we may assume that $C$ acts semiregularly on $\V\Gamma$. In particular, $1=E_v\cap C=\C_{E_v}(N)$ and the elementary abelian $2$-group $E_v$ acts faithfully on $N$ by conjugation. As $|E_v|=2^t$, Lemma~\ref{PabloLemma} implies  $|N|\geq p^t$. Hence $$|\V\Gamma|=|\V(\Gamma/N)||N|\geq 6|N|\geq 
 6p^
 t=\frac{6}{p}(2^{t+1})^{\log_2(p)}=\frac{6}{p}(2|E_v|)^{\log_2(p)}=\frac{6}{p}|G_v|^{\log_2(p)}$$ and part~$(C)$ holds.

From now on, we assume that $p=2$. In particular, $E$ is a $2$-group. Fix an orientation of the cycle $\Gamma/E \cong \C_m$, thus obtaining a directed cycle $\vC_m$. By lifting this orientation to the graph $\Gamma$,
we obtain a digraph $\vGa$ of in-degree and out-degree $2$, whose underlying graph is $\Gamma$, and such that $\vGa/E \cong \vC_m$. Observe that the orientation preserving group $G^+ = \Aut(\vGa) \cap G$ 
has index $2$ in $G$, contains the group $E$ and the quotient group $G^+/E$ is cyclic of order $m$.

Let $v$ be a vertex of $\vGa$. Let $t$ be the largest integer such that $E_v$ acts transitively on the $t$-arcs of $\Gamma$ starting at $v$ and let $(v_0,...,v_t)$, $v_0 = v$, be such a $t$-arc. For $0\leq i\leq t$, let $E_i$ be the pointwise stabiliser of $\{v_0,...,v_{t-i}\}$. Consider the action of $E_0$ on the out-neighbours of $v_t$. If this action were transitive, then $E_v$ would act transitively on the $(t+1)$-arcs starting at $v$, contradicting the maximality of $t$. Since $v_t$ has only two out-neighbours, we conclude that $E_0$ must fix them both. Since $\Gamma$ is strongly connected, it follows that $E_0=1$ and hence $|E_i|=2^i$ for $0\le i \le t$. In particular,
$|E_t|=|E_{v}|=2^t$ and 
\begin{equation}\label{eq:Thm241}
|G_{v}|=2|E_v|=2^{t+1}.
\end{equation}

As $|G_v|\geq 8$, we have $t\geq 2$. Since $E_v$ is transitive on the $t$-arcs of $\vGa$ starting at $v$ and $G^+$ is vertex-transitive, $G^+$ is transitive on $t$-arcs of $\vGa$. In particular, there exists $a\in G^+$ such that $(v_0,\ldots,v_t)^a=(v_0^a,v_0,\ldots,v_{t-1})$, that is, $v_i = v_0^{a^{-i}}$ for $0\leq i\leq t$. As $a$ acts as a rotation of order $m$ on $\vGa/E$, we get $G^+=E\langle a\rangle$. Let $x$ be the generator of the cyclic group $E_1$. For any integer $i$, let $x_i=x^{a^i}$ and $v_i=v_0^{a^{-i}}$ (note that this definition of $v_i$ is consistent with the definition of $v_i$ that we had for $0\leq i\leq t$). 

To make the rest of the proof easier to read, we prove six claims from which the theorem will follow.

\smallskip

\noindent\textsc{Claim~1. }$E_i=\langle x_0,...,x_{i-1}\rangle$ for $1\leq i\leq t$. 

\noindent We argue by induction on $i$. If $i=1$, then by definition, $x=x_0$ and $E_1=\langle x_0\rangle$. Assume $E_i=\langle x_0,\ldots,x_{i-1}\rangle$ for some $i$ with $1\leq i\leq t-1$. As $x$ fixes $\{v_0,\ldots,v_{t-1}\}$ pointwise  and $v_t^{x}\neq v_t$, the element $x_i=x^{a^i}$ fixes $\{v_0^{a^i},\ldots,v_{t-1}^{a^i}\}$ pointwise  and $(v_t^{a^i})^{x_i}\neq v_t^{a^i}$, that is, $x_i$ fixes $\{v_{-i},\ldots,v_{-i+t-1}\}$ pointwise and $v_{-i+t}^{x_i}\neq v_{-i+t}$. In particular, by definition of $E_{i+1}$, we get  $x_i\in E_{i+1}\setminus E_i$. As $|E_{i+1}:E_i|=2$, we obtain $E_{i+1}=E_i\langle x_i\rangle=\langle x_0,\ldots,x_i\rangle$, completing the induction.~$_\blacksquare$

\smallskip

For any positive integer $i\geq 1$, we define $E_i=\langle x_0,\ldots,x_{i-1}\rangle$ (Claim~$1$ shows that, for $1\leq i\leq t$, this definition is consistent with the original definition of $E_i$). Note that, for any $i\geq 0$, $E_i\leq \langle E_i,E_i^a\rangle=E_{i+1}$. Since $E$ is finite, there exists a smallest $e\geq 0$ such that $E_{t+e}=E_{t+e+1}$. Since $E_{t+e}=E_{t+e+1}=\langle E_{t+e},E_{t+e}^a\rangle$, it follows that $E_{t+e}$ is normalised by $a$.
\smallskip

\noindent\textsc{Claim~$2$. }$E=E_{t+e}$. 

\noindent Clearly $E_{t+e}\leq E$. Moreover, since $\vGa$ is a connected $G^+$-arc-transitive digraph and $a$ maps $v$ to an adjacent vertex, we have that $G^+=\langle G^+_v,a\rangle=\langle E_t,a\rangle$. It follows that $E_{t+e}$ is normalised by $G^+$. Therefore, as $E_v=E_t$, we obtain $E_{t+e}\geq E_v^{G^+}=\langle E_w\mid w\in \V\Gamma\rangle=E_v^{G}=E$.~$_\blacksquare$

\smallskip

From the definition of $e$, we have $|E_{t+i}:E_{t+i-1}|\geq 2$ for $1\leq i\leq e$ and hence $|E_{t+e}:E_t|\geq 2^e$. In particular, Claim~$2$ gives

\begin{equation}\label{eq:Thm242}
|v^{E}|=|E:E_v|=|E_{t+e}:E_t|\geq 2^e.
\end{equation} 

\noindent\textsc{Claim~$3$. }$m\geq t+e$.

\noindent Assume, by contradiction, that $m<t+e$. In particular,  $E=E_{t+e}=\langle
x_0,\ldots,x_{t+e-m-1},\ldots,x_{t+e-1}\rangle$. Since $G^+/E$ is a cyclic group of order $m$ and $a\in G^+$, we get
$a^m\in E$ but $x_{t+e-1}=x_{t+e-m-1}^{a^m}$ and hence, by
Lemma~\ref{Pablolemma2}, we have $E_{t+e}=\langle x_0,\ldots, 
x_{t+e-2}\rangle=E_{t+e-1}$, contradicting  the minimality of $e$.~$_\blacksquare$

\smallskip

Let  $\Z(E)$ be the centre of $E$. If $\Z(E)$ does not act semiregularly on $\V\Gamma$, then, by Corollary~\ref{RealCorollary}, we obtain that~$(A)$ or~$(B)$ holds. Therefore we may assume that $\Z(E)$ acts semiregularly on $\V\Gamma$. Recall that $E_v=E_t=\langle x_0,\ldots,x_{t-1}\rangle$ is abelian and hence $E_t^{a^{t-1}}=\langle x_{t-1},\ldots,x_{2t-2}\rangle$ is also abelian. Therefore $x_{t-1}$ is central in $\langle E_t,E_t^{a^{t-1}}\rangle=\langle x_0,\ldots,x_{2t-2}\rangle=E_{2t-1}$. Since $x_{t-1}\in E_v$ and $\Z(E)\cap E_v=1$, we get $E_{2t-1}<E=E_{t+e}$ and hence $2t-1<t+e$ from which it follows that $e\geq t$. Assume $e\geq t+1$. From~$(\ref{eq:Thm242})$ and Claim~$3$, we have $|\V\Gamma|=m|v^E|\geq (2t+1)2^{t+1}$. From~$(\ref{eq:Thm241})$, we have $2|G_v|\log_2(|G_v|/2)=2t2^{t+1}$ and hence~$(B)$ holds with the inequality being strict. Therefore, from now on, we may assume that $e=t$ and, in particular,
\begin{equation}\label{eq:6}
E=E_{2t}=\langle x_0,\ldots,x_{t-1},x_t,\ldots,x_{2t-1}\rangle=\langle E_v,E_v^{a^t}\rangle.
\end{equation}

Since $E$ is a $2$-group, $\Z(E)$ intersects every normal subgroup of $E$ non-trivially. In particular, $N\cap \Z(E) \not =1$.  Since $N$ is a minimal normal subgroup of $G$ and $\Z(E)$ is normal in $G$, this implies that
$N\leq \Z(E)$. Let $y$ be in $\Z(E)E_v^{a^t}\cap E_v$. Then $y=nx$ for some $n\in \Z(E)$ and $x\in E_v^{a^t}$. Let $g$ be in $E_v^{a^t}$. Using the fact that $n$ is central in $E$, that $x$ and $g$ are in $E_v^{a^t}$ and that $E_v^{a^t}$ is abelian, we see that
$y=nx$ commutes with $g$. Since $g$ is an arbitrary element of $E_v^{a^t}$, we obtain that $y$ is centralised by $E_v^{a^t}$. As $E_v$ is abelian and $y\in E_v$, the element $y$ is centralised by $E_v$. Hence, by~$(\ref{eq:6})$, we obtain $y\in \Z(E)\cap E_v=1$ and therefore $\Z(E)E_v^{a^t}\cap E_v=1$. It follows that 
\begin{eqnarray}\label{eq:fact0}
|\Z(E)E_v^{a^t}E_v|&=&\frac{|\Z(E)E_v^{a^t}||E_v|}{|\Z(E)E_v^{a^t}\cap E_v|}=|\Z(E)E_v^{a^t}||E_v|\\
&=&\frac{|\Z(E)||E_v^{a^t}|}{|\Z(E)\cap E_v^{a^t}|}2^t=|\Z(E)||E_v^{a^t}|2^t=|\Z(E)|2^{2t}.\nonumber
\end{eqnarray} 

In particular, $|E|\geq |\Z(E)|2^{2t}\geq 2^{2t+1}$ and hence $$|\V(\Gamma)|=m|v^E|=m|E:E_v|\geq 2t 2^{t+1}=2|G_v|\log_2(|G_v|/2)$$ and part~$(B)$ holds, with equality if and only if $m=2t$ and $|E|=2^{2t+1}$. This concludes the proof of the first part of Theorem~\ref{WreathCover}. 

For the remainder of this proof, we assume that~$(B)$ holds with equality and that $\Gamma$ is not as in~$(A)$. As noted above, we must have 
$m=2t$ and $|E|=2^{2t+1}$, from which it follows that $|\Z(E)|=2$. It remains to show that $(\Gamma,G)$ is one of $(\Gamma_t^+,G_t^+)$ or $(\Gamma_t^-,G_t^-)$. Since $1\neq N\leq \Z(E)$, we have $N=\Z(E)$. Furthermore, from~$(\ref{eq:fact0})$ we obtain 
\begin{equation}\label{eq:fact}
E=\Z(E)E_v^{a^t}E_v.
\end{equation}

As $E/\Z(E)$ is an elementary abelian $2$-group and $E$ is non-abelian, we have $E^2=[E,E]=\Z(E)$. Write $\Z(E)=\langle z\rangle$. 

\smallskip

\noindent\textsc{Claim~$4$. }
\begin{eqnarray*}
[x_i,x_j]&=&\left\{
\begin{array}{ccl}
1&&\textrm{if }|j-i|\leq t-1,\\
z&&\textrm{if }|j-i|=t.
\end{array}
\right.
\end{eqnarray*}

\noindent If $0\leq j-i\leq t-1$, then $x_i$ and $x_j$ are both contained in $E_t^{a^i}=\langle x_i,\ldots,x_{i+t-1}\rangle$ which is abelian and hence they commute. Suppose that $[x_0,x_t]=1$. It follows that $x_t$ commutes with $E_v=\langle x_0,\ldots,x_{t-1}\rangle$ and with $E_v^{a^t}=\langle x_t,\ldots,x_{2t-1}\rangle$ and hence is central in $E$, which is contradiction. Hence, $[x_0,x_t]\neq 1$ and, for each $i$, $[x_i,x_{i+t}]=[x_0,x_t]^{a^i}\neq 1$. Since $[E,E]=\Z(E)$, it follows that $[x_i,x_{i+t}]=z$.~$_\blacksquare$

\smallskip

\noindent\textsc{Claim~$5$. }Replacing $a$ by an element in the coset $E_va$ if necessary, we have
\begin{eqnarray*}
x_i^a&=&x_{i+1}\;(\textrm{for }0\leq i\leq 2t-2), \textrm{ }x_{2t-1}^a=x_0, \textrm{ and } v^a \textrm{ is adjacent to } v.
\end{eqnarray*} 

\noindent Since $|G^+:E|=2t$, we have that $a^{2t}\in E$. It follows that $x_{2t}=x_0^{a^{2t}}=x_0[x_0,a^{2t}]=x_0z^\varepsilon$ for some $\varepsilon\in\{0,1\}$. If $\varepsilon=0$, then there is nothing to prove. We assume that $\varepsilon=1$ and let $a'=x_{t-1}a$. By Claim~$4$, we get that, for $0\leq i\leq 2t-2$, $x_i$ commutes with $x_{t-1}$ and hence $x_i^{a'}=x_{i+1}$. Moreover $x_{2t-1}^{a'}=(x_{2t-1}z)^a=x_{2t}z^a=(x_{0}z)z=x_{0}$ and hence the claim is proved replacing $a$ by $a'$.~$_\blacksquare$

\smallskip

From Claim~$5$, it follows that, for every $i$, $x_i^{a^{2t}}=x_i$. Since $a^{2t}\in E$, we have
\begin{equation}\label{eq:2part2}
a^{2t}\in\Z(E).
\end{equation}
Moreover, since $x_{i+2t}=x_i^{a^{2t}}=x_i$, from now on the index $i$ of $x_i$ will be taken modulo $2t$. Let $b$ be an element of $G_v\setminus E_v$. 

\smallskip

\noindent\textsc{Claim~$6$. }Replacing $b$ by an element in the coset $bE_v$ if necessary, we have $x_{i}^b=x_{t-1-i}$ for every $i$.

\noindent Since $b$ normalises $E_v$ and $x_{t-1}\in E_v$, we have $x_{t-1}^b=x_0^{\varepsilon_0}\cdots x_{t-1}^{\varepsilon_{t-1}}$ for some $\varepsilon_i\in\{0,1\}$. As $G/E\cong \D_m$, we get $(a^{1-t})^b=a^{t-1}y$ for some $y\in E$. Hence
\begin{eqnarray*}
x_0^b&=&x_{t-1}^{a^{1-t}b}=x_{t-1}^{ba^{t-1}y}=(x_0^{\varepsilon_0}\cdots x_{t-1}^{\varepsilon_{t-1}})^{a^{t-1}y}=(x_{t-1}^{\varepsilon_0}\cdots x_{2t-2}^{\varepsilon_{t-1}})^y\\
&=&x_{t-1}^{\varepsilon_0}\cdots x_{2t-2}^{\varepsilon_{t-1}}[x_{t-1}^{\varepsilon_0}\cdots x_{2t-2}^{\varepsilon_{t-1}},y]=x_{t-1}^{\varepsilon_0}\cdots x_{2t-2}^{\varepsilon_{t-1}}z^{\varepsilon}
\end{eqnarray*}
for some $\varepsilon\in\{0,1\}$. Since $x_0^b\in E_v=\langle x_0,\ldots,x_{t-1}\rangle$, from the previous equation we obtain $\varepsilon_1=\cdots=\varepsilon_{t-1}=\varepsilon=0$ and $\varepsilon_0=1$, that is, $x_0^b=x_{t-1}$ and $x_{t-1}^b=x_0$. Fix $i$ in  $\{0,\ldots,t-1\}$ and let $y\in E$ with $(a^{1-t+i})^b=a^{t-1-i}y$. We get
$x_i^b=x_{t-1}^{a^{1-t+i}b}=x_{t-1}^{ba^{t-1-i}y}=x_0^{a^{t-1-i}y}=x_{t-1-i}^y=x_{t-1-i}z^\varepsilon$ for some $\varepsilon\in\{0,1\}$. Since $x_{t-1-i},x_i^b\in E_v$, we have $\varepsilon=0$ and hence $x_i^b=x_{t-1-i}$. 
 As $G/E\cong \D_{2t}$, we get $(a^t)^b=a^ty$ for some $y\in E$. Using~$(\ref{eq:fact})$, write $y=z^\varepsilon y_1y_2$ with $\varepsilon\in\{0,1\}$, $y_1\in E_v^{a^t}$ and $y_2\in E_v$. Let $b'=by_2$. As $y_2\in E_v$, for $i\in\{0,\ldots,t-1\}$, we have $x_{i}^{b'}=x_{t-i-1}^{y_2}=x_{t-i-1}$. Fix $i$ in $\{0,\ldots,t-1\}$, we obtain $x_{t+i}^{b'}=x_i^{a^tby_2}=x_{i}^{ba^tyy_2}=x_{t-1-i}^{a^tz^\varepsilon y_1}=x_{2t-1-i}^{z^\varepsilon y_1}=x_{2t-1-i}$, where in the last equality we used that $y_1,x_{2t-1-i}\in E_v^{a^t}$ and that $E_v^{a^t}$ is abelian. 
Hence the claim is proved replacing $b$ by $b'$.~$_\blacksquare$

\smallskip

From Claim~$6$, it follows that $b^2$ centralises $E$. Since $b\in G_v\setminus E_v$, $|G_v:E_v|=2$ and $E_v\cap \Z(E)=1$, we have
\begin{equation}\label{eq:5part2}
b^2=1.
\end{equation}

As $G/E\cong \D_{2t}$, we get $aa^b\in E$. From Claims~$5$ and~$6$, we have $x_i^{aa^b}=x_{i+1}^{bab}=x_{t-2-i}^{ab}=x_{t-1-i}^b=x_i$. Therefore $aa^b\in \Z(E)$ and $a^b=a^{-1}z^{\varepsilon}$ for some $\varepsilon\in\{0,1\}$. Since $G$ is arc-transitive on $\Gamma$ and $v^a$ is adjacent to $v$, there exists $g\in G$ such that $(v,v^a)^g=(v^a,v)$. It follows that $ag\in G_v$ and $ga^{-1}\in G_v$ which yields $ag\in G_v\cap aG_va$ and hence $G_v\cap aG_va=(E_v\cup E_vb)\cap aG_va\neq \emptyset$. Since $a$ acts as a rotation of order $2t\geq 3$ on $\vGa/E$, we have that the elements in $a^{-2}E=a^{-1}Ea^{-1}$ act fixed-point-freely on $\V\Gamma$. Therefore $a^{-1}Ea^{-1}$ intersects $G_v$ trivially and hence $E_v\cap aG_va=\emptyset$. This gives $E_vb\cap aG_va\neq \emptyset$. Let $yb\in E_vb\cap aG_va$ with $y=x_0^{\varepsilon_0}\cdots x_{t-1}^{\varepsilon_{t-1}}\in E_v$. As $a^b=a^{-1}z^\varepsilon$, we obtain  $a^{-1}yba^{-1}=y^aa^{-1}ba^{-1}=y^abz^\varepsilon\in G_v$. Since
  $G
 _v=E_v\cup E_vb$, it follows that $$y^az^\varepsilon=x_1^{\varepsilon_0}\cdots x_{t-1}^{\varepsilon_{t-2}}x_t^{\varepsilon_{t-1}}z^{\varepsilon}\in E_v$$
and hence $x_t^{\varepsilon_{t-1}}z^{\varepsilon}\in E_v\cap E_v^{a^t}\Z(E)$.  Now~$(\ref{eq:fact})$ yields $E_v\cap E_v^{a^t}\Z(E)=1$ and hence $\varepsilon=0$. Therefore
\begin{equation}\label{eq:6part2}
a^b=a^{-1}.
\end{equation}

Recalling the definitions of $(\Gamma_t^+,G_t^+)$ and $(\Gamma_t^-,G_t^-)$ from Section~\ref{ss:Gamma}, it follows from Claims~$4$,~$5$ and~$6$ and from~$(\ref{eq:2part2})$,~$(\ref{eq:5part2})$ and~$(\ref{eq:6part2})$ that either $(\Gamma,G)=(\Gamma_t^+,G_t^+)$ (if $a^{2t}=1$) or $(\Gamma,G)=(\Gamma_t^-,G_t^-)$ (if $a^{2t}=z$).
\end{proof}

\section{Case where $G$ has a non-abelian minimal normal subgroup}\label{eaAS}

Our main tool in this section is the following observation, which follows from~\cite[Theorem]{D}.

\begin{lemma}\label{lemma:ea}
Let $(\Gamma,G)$ be a locally-$\D_4$ pair and let $v\in \V\Gamma$. Then $G_v$ is a $2$-group and contains a subgroup $P$ of index $2$ and of nilpotency class at most $2$. Moreover, $P$ contains an elementary abelian $2$-subgroup $E$ of order $2^m$ with $|P|\leq 2^{\lfloor 3m/2\rfloor}$ and, in particular, $|G_v|\leq 2|E|^{3/2}$.
\end{lemma}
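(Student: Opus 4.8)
The statement combines a structural description of the abstract group $G_v$ with a numerical bound, so I would split the work into recognising the structure and then counting, treating \cite[Theorem]{D} as the engine for the former.

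First I would record that $G_v$ is a $2$-group. As $(\Gamma,G)$ is locally-$\D_4$, the image $G_v^{\Gamma(v)}\cong \D_4$ is a $2$-group, so it is enough to know that the kernel $G_v^{[1]}$ of the action of $G_v$ on $\Gamma(v)$ is a $2$-group. This is the standard output of the Thompson--Wielandt theorem applied to an edge $\{u,v\}$ (which gives that the pointwise stabiliser of $\Gamma(u)\cup\Gamma(v)$ is a $p$-group, necessarily with $p=2$ since the local group is a $2$-group), combined with the connectivity of $\Gamma$ to propagate this along the graph; alternatively it is part of the conclusion of \cite[Theorem]{D}. I would keep this step short.

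The heart of the lemma is the existence of the index-$2$ subgroup $P$ of nilpotency class at most $2$ and of the elementary abelian subgroup $E\le P$ with $|P|\le 2^{\lfloor 3m/2\rfloor}$, and this is exactly what \cite[Theorem]{D} provides once $G_v$ is recognised as a $2$-group of the relevant type: I would identify $G_v$ with the abstract group to which \cite[Theorem]{D} applies and read off $P$, $E$ and the inequality. To make the numerical estimate transparent I would also display the mechanism behind it. Since $P$ has class at most $2$, we have $P'\le \Z(P)$, and commutation equips $P/\Z(P)$ with a $P'$-valued alternating bilinear form, compatible with the squaring map $x\mapsto x^2$. An elementary abelian subgroup of $P$ then corresponds to a subspace of $P/\Z(P)$ that is simultaneously totally isotropic for the form and singular for the squaring map, enlarged by the involutions in the centre; the bound $|P|\le 2^{\lfloor 3m/2\rfloor}$ asserts that a maximal such subspace accounts for at least a $2/3$ fraction of $\log_2|P|$, equivalently $m\ge \frac{2}{3}\log_2|P|$.

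The final ``in particular'' is then immediate: $|G_v|=2|P|\le 2\cdot 2^{\lfloor 3m/2\rfloor}\le 2\,(2^{m})^{3/2}=2|E|^{3/2}$. The one genuinely non-formal point, and what I expect to be the main obstacle, is the sharp exponent $3/2$: it is false for arbitrary $2$-groups of class $2$ (the quaternion group $Q_8$ and the extraspecial groups $2^{1+2t}$ with $t\ge 2$ both violate $|P|\le 2^{\lfloor 3m/2\rfloor}$), so the estimate cannot follow from class-$2$-ness alone and must exploit the specific shape of $G_v$ forced by \cite[Theorem]{D} --- in particular a centre that is large relative to $P/\Z(P)$. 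Thus essentially all the content of the lemma is concentrated in \cite[Theorem]{D}, and the remainder is bookkeeping together with the elementary arithmetic above.
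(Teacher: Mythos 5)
Your proposal is correct and takes essentially the same route as the paper: the paper offers no argument beyond asserting that the lemma ``follows from~\cite[Theorem]{D}'', and your proof likewise reduces everything to Djokovi\'c's theorem and then supplies the trivial arithmetic $|G_v|=2|P|\le 2\cdot 2^{\lfloor 3m/2\rfloor}\le 2(2^m)^{3/2}=2|E|^{3/2}$. Your supplementary remarks --- that $G_v$ being a $2$-group follows from connectivity once the local group is a $2$-group, and that the exponent $3/2$ genuinely requires the specific structure from \cite{D} since it fails for arbitrary class-$2$ groups such as extraspecial $2$-groups --- are accurate and consistent with how the paper uses the result.
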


As Lemma~\ref{lemma:ea} indicates, the order of  $G_v$ in a locally-$\D_4$ pair can be bounded from above by a function of the order of a  maximal elementary abelian $2$-group of $G_v$. This motivates the introduction of the following definition.

\begin{definition}\label{def:eg}
The \emph{$2$-rank} $r_G$ of a finite group $G$ is the minimal number of generators of an elementary abelian $2$-subgroup of maximal order of $G$. We denote by $e_G$  the number $2^{r_G}$, that is, $e_G$ is the order of an elementary abelian $2$-subgroup of $G$ of maximal order.
\end{definition}

Before proving the main results of this section we need the following lemma on the $2$-rank of a wreath product.

\begin{lemma}\label{lemma:wp}
Let $W=H\wr_{\Delta} K$. If $|H|$ is even, then $e_W\leq e_{H}^{|\Delta|}$.
\end{lemma}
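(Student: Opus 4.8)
The plan is to take a maximal elementary abelian $2$-subgroup $A$ of $W=H^\Delta\rtimes K$, so that $|A|=e_W$, and to bound $|A|$ by splitting it into its ``base part'' and its image in the top group. Writing $\pi\colon W\to K$ for the projection with kernel the base group $H^\Delta$, I would set $\bar A=\pi(A)\le K$ and $A_0=A\cap H^\Delta$, so that $|A|=|A_0|\cdot|\bar A|$. Since $\bar A$ is an elementary abelian $2$-group, the next step is to decompose $\Delta$ into the $\bar A$-orbits $\Delta_1,\dots,\Delta_m$, of sizes $n_1,\dots,n_m$, each a power of $2$. A transitive abelian permutation group is regular, so $\bar A$ induces a regular group of order $n_j$ on each $\Delta_j$; as $\bar A$ embeds into the product of these induced groups, this already gives $|\bar A|\le\prod_{j=1}^m n_j$.

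The crux is bounding $|A_0|$ by exploiting that $A$ is abelian. For each $k\in\bar A$ fix a lift $a\in A$ with $\pi(a)=k$ and base part $b\in H^\Delta$. A direct computation of conjugation in the wreath product shows that the relation $[f,a]=1$, which holds for every $f\in A_0$ because $A$ is abelian, reads coordinatewise as $f_{k\delta}={}^{b_{k\delta}}f_{\delta}$ for all $\delta\in\Delta$; that is, moving along the orbit by $k$ replaces the relevant coordinate of $f$ by a fixed conjugate, where the conjugating element depends only on the chosen lift and not on $f$. Fixing a base point $\delta_0$ in an orbit $\Delta_j$ and using transitivity of $\bar A$ on $\Delta_j$, every coordinate $f_\delta$ with $\delta\in\Delta_j$ is therefore a fixed conjugate of $f_{\delta_0}$. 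Hence the projection of $A_0$ onto $H^{\Delta_j}$ factors through the single-coordinate map $f\mapsto f_{\delta_0}$, whose image is an elementary abelian $2$-subgroup of $H$ and so has order at most $e_H$. Since $A_0\le\prod_{j=1}^m H^{\Delta_j}$, multiplying over the orbits yields $|A_0|\le e_H^{\,m}$.

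Combining the two estimates gives
\[
e_W=|A|=|A_0|\cdot|\bar A|\le e_H^{\,m}\prod_{j=1}^m n_j=\prod_{j=1}^m\bigl(e_H\,n_j\bigr).
\]
To conclude, I would invoke the hypothesis that $|H|$ is even, which guarantees $e_H\ge 2$; combined with the elementary inequality $2^{\,n-1}\ge n$ (valid for $n\ge 1$) this gives $e_H\,n_j\le e_H\cdot 2^{\,n_j-1}\le e_H^{\,n_j}$ for every orbit. Multiplying over $j$ and using $\sum_{j=1}^m n_j=|\Delta|$ then yields $e_W\le\prod_{j=1}^m e_H^{\,n_j}=e_H^{|\Delta|}$, as required. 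The main obstacle is the middle step: one must verify that the coordinates of a base element are rigidly linked across each orbit by fixed conjugations, which is precisely where abelianness of $A$ is used. The evenness of $|H|$ then enters only at the very end, exactly to handle orbits of size $2$, where the required bound $e_H\,n_j\le e_H^{\,n_j}$ becomes $2e_H\le e_H^{\,2}$ and thus forces $e_H\ge 2$.
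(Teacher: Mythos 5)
Your proof is correct and takes essentially the same approach as the paper's: decompose $\Delta$ into the orbits of the elementary abelian group, use that a transitive abelian permutation group is regular to bound the part acting on $\Delta$ by the orbit sizes, use abelianness to pin all base coordinates over each orbit to a single coordinate, and invoke $e_H\geq 2$ (via $n\leq 2^{n-1}$) to absorb the orbit sizes into powers of $e_H$. The only differences are organizational---you replace the paper's induction on the number of orbits by the global splitting $|A|=|A_0|\,|\overline{A}|$---and your fixed-conjugation computation is in fact slightly more careful than the paper's, which asserts that elements of $E\cap B$ are constant functions when in truth the coordinates along an orbit are only linked by fixed conjugations depending on the chosen lifts (the key bound $|E\cap B|\leq e_H$ holds either way, since evaluation at one coordinate is still injective).
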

\begin{proof}
Consider $W$ as a permutation group on $\Omega=H\times \Delta$ where $H$ acts on the set $H$ by right multiplication. We identify the system of imprimitivity $\Sigma=\{H\times \{\delta\}\mid \delta\in \Delta\}$ with $\Delta$, where the block $H\times\{\delta\}\in\Sigma$ is identified with $\delta\in \Delta$. In particular, we say that a subgroup of $W$ is transitive on $\Delta$ if it is transitive on $\Sigma$. Note that the kernel of the action of $W$ on $\Delta$ is $B=H^{\Delta}$.

Let $E$ be an elementary abelian 2-subgroup of $W$ and let $\mathcal{O}_1,\ldots,\mathcal{O}_r$ be the orbits of $E$ on $\Delta$. Note that
$E\leq (H\wr_{\mathcal{O}_1}K)\times \cdots \times
(H\wr_{\mathcal{O}_r}K)$. We argue by induction on $r$. Assume first that $r=1$ (that is, $E$ acts transitively on $\Delta$). Let $f$ be an element of $E\cap B$. Let $\delta_1,\delta_2\in\Delta$ and let $e=\sigma g\in E$ with $\delta_1^\sigma=\delta_2$ where $g\in B$, $\sigma\in K$. Since $E$ is abelian, it follows that $f(\delta_2)=f(\delta_1^\sigma)=f^e(\delta_1)=f(\delta_1)$ and hence $f$ is a constant function of $B$. This yields that $E\cap B$ is isomorphic to an elementary abelian subgroup of $H$ and hence $|E\cap B|\leq e_{H}$. As $E$ is abelian and transitive on $\Delta$, the group $E/(E\cap B)$ acts regularly on $\Delta$ and hence $|E:E\cap B|=|\Delta|$. It follows that $|E|=|E\cap B||E:E\cap B|\leq e_{H}|\Delta|$. Since $|H|$ is even, we have $e_{H}\geq 2$ and hence $e_H
 |\Delta| \leq e_{H}^{|\Delta|}$. Assume now that $r\geq 2$.  Since $E\leq
(H\wr_{\mathcal{O}_1}K)\times \cdots \times (H\wr_{\mathcal{O}_r}K)$, using the induction hypothesis, we obtain
$$|E|\leq \prod_{i=1}^r|H\mathrm{wr}_{\mathcal{O}_i}K|\leq \prod_{i=1}^re_{H}^{|\mathcal{O}_i|}=e_{H}^{\sum_i|\mathcal{O}_i|}=e_H^{|\Delta|},$$ completing the induction and the proof.
\end{proof}

The following technical theorem is the key ingredient in the proof of our main result for this section. The proof depends heavily on the classification of finite simple groups and includes a very long case-by-case analysis, hence we defer it to Section~\ref{TechnicalProof}.

\begin{theorem}\label{theo:key}Let $T$ be a non-abelian simple group and $l\geq 1$. Write $l=2^{l_e}l_o$ with $l_o$ odd, $|T|=2^to$ with $o$ odd and $e=e_{\Aut(T)}$. Then either $l_oo^l>6le^{3l/2}\log_2(e)$  or one of the following holds:
\begin{description}
\item[$(i)$]$l\in\{1,2,3,4\}$ and $T=\Alt(5)$ or $\Alt(6)$;
\item[$(ii)$]$l\in\{1,2,4\}$ and $T=A_1(8)$ or $A_2(2)$;
\item[$(iii)$]$l\in\{1,2\}$ and $T=A_1(2^f)$ (with $f\in\{4,5,6\}$), $\Alt(8)$, $A_2(4)$, $B_2(4)$, $B_2(3)$ or $B_3(2)$; 
\item[$(iv)$]$l=1$ and  $T=A_1(2^f)$ (with $f\in\{7,\ldots,12\}$), $M_{12}$, $M_{22}$, $\Alt(7)$, $A_1(11)$, $A_1(13)$, $A_1(25)$, $A_4(2)$, $A_5(2)$, $A_2(3)$, $B_2(8)$, $B_2(16)$, $B_2(32)$,  $B_3(4)$, $B_4(2)$, $D_4(2)$, ${^2A}_2(3)$, ${^2A}_5(2)$ or  $^2{D}_4(2)$.
\end{description}
\end{theorem}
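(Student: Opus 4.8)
The plan is to decouple the dependence on $l$ from the dependence on $T$. Set $\rho = o/e^{3/2}$ and recall $l = 2^{l_e}l_o$, so that $l/l_o = 2^{l_e}$; dividing the target inequality by $l_o e^{3l/2}$ turns it into the equivalent form
\[
\rho^{\,l} > 6\cdot 2^{l_e}\log_2(e).
\]
Every non-abelian simple group has $2$-rank at least $2$, so $e = e_{\Aut(T)}\ge 4$ and $\log_2(e)\ge 2$. The key reduction is that the inequality holds for \emph{all} $l\ge 1$ as soon as $\rho > 6\log_2(e)$: in that case $\rho > 12 > 2$, whence $\rho^{\,l-1}\ge 2^{\,l-1}\ge l$, and since $2^{l_e}\le l$ we get $\rho^{\,l} = \rho\cdot\rho^{\,l-1} > 6\log_2(e)\cdot l \ge 6\cdot 2^{l_e}\log_2(e)$. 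Contrapositively, a simple group $T$ can fail the inequality for some $l$ only if
\[
o \le 6\,e^{3/2}\log_2(e).
\]
This single closed condition is what drives the whole argument: it says the odd part of $|T|$ is bounded by a slowly growing function of the $2$-rank of $\Aut(T)$, and the problem splits into (a) listing all $T$ satisfying it, and (b) for each such $T$, determining exactly which $l$ fail.

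For step (a) I would run through the Classification of Finite Simple Groups family by family, in each case producing a lower bound for $o$ and an upper bound for $e$. Since $o$ grows much faster than any power of the $2$-rank, the condition $o\le 6e^{3/2}\log_2(e)$ forces $T$ into a short list. For $\Alt(n)$ one uses $e_{\Sym(n)} = 2^{\lfloor n/2\rfloor}$ (and the analogous value for the slightly larger $\Aut(\Alt(6))$) against the fact that $o$ is divisible by every odd prime up to $n$, leaving only small $n$. The sporadic groups form a finite tabulated list checked directly. The substantial part is the groups of Lie type, where I would bound $e_{\Aut(T)}$ from the structure of $\Aut(T)$ (inner-diagonal automorphisms together with field and graph automorphisms), estimating the largest elementary abelian $2$-subgroup coming from a unipotent subgroup in characteristic $2$, and from a maximal torus and its normaliser in odd characteristic, and then adding the $2$-contribution of field and graph automorphisms. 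In odd characteristic the $2$-rank is controlled by the Lie rank, so $o$ dwarfs $e^{3/2}$ except in small rank; in characteristic $2$ the governing family is $A_1(2^f)$, where $e = 2^f$ and $o = 2^{2f}-1$, so $\rho\approx 2^{f/2}$ and the condition $o\le 6e^{3/2}\log_2(e)$ reads $2^{f/2}\lesssim 6f$, cutting off near $f = 12$; the remaining Lie-type candidates are likewise small.

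For step (b), with the finite list of candidate $T$ in hand, I would analyse the one-variable function $g(l) := \rho^{\,l}/\bigl(6\cdot 2^{l_e}\log_2(e)\bigr)$, the inequality being $g(l) > 1$. Its failure set $\{l : g(l)\le 1\}$ is finite, since $\rho > 1$ forces $\rho^{\,l}$ to outgrow $6\cdot 2^{l_e}\log_2(e)\le 6l\log_2(e)$ eventually. The one subtlety is that $g$ is \emph{not} monotone in $l$: because $2l$ carries one more factor of $2$ than $l$, one has $g(2l)/g(l) = \rho^{\,l}/2$, so doubling $l$ helps only once $\rho^{\,l} > 2$. This is precisely why, for instance, $A_1(8)$ (with $\rho\approx 2.79$) satisfies the inequality at $l = 3$ yet fails again at $l = 4$, and why for the groups of smallest $\rho$, such as $\Alt(5)$ and $\Alt(6)$, even some odd values of $l$ fail. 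For each candidate the finite failure set is then read off by a direct numerical check and matched against the lists in $(i)$--$(iv)$; qualitatively, the smaller $\rho = o/e^{3/2}$ is, the larger the failure set, which explains the nested structure of the four exceptional families.

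The step I expect to be the main obstacle is the upper bound on $e_{\Aut(T)}$ for the groups of Lie type in characteristic $2$. There the maximal elementary abelian $2$-subgroups can be genuinely large (they sit inside unipotent radicals rather than tori), and obtaining a sharp enough bound --- sharp enough to separate the finitely many true exceptions from the groups that satisfy the inequality for every $l$ --- requires identifying these subgroups precisely and carefully tracking the additional $2$-elements contributed by field and graph automorphisms. This is where the long case-by-case analysis, and the reliance on the Classification, is concentrated.
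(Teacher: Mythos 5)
Your proposal is correct and follows essentially the same route as the paper: the paper likewise first proves that $(\dag)_1$ (your condition $o>6e^{3/2}\log_2(e)$) implies $(\dag)_l$ for all $l\geq 1$ (and that $(\dag)_2$ implies $(\dag)_l$ for all $l\geq 2$), then runs a CFSG family-by-family comparison of a lower bound for $o$ against an upper bound for $e_{\Aut(T)}$ --- via $e_{\Sym(n)}=2^{\lfloor n/2\rfloor}$ for alternating groups, tabulated $2$-ranks for sporadics, diagonalisation of elementary abelian $2$-subgroups of $\PGL(n,q)$ combined with minimal projective representation degrees in odd characteristic, and cited bounds on maximal abelian unipotent subgroups in characteristic $2$ --- and finishes the surviving candidates by direct (partly \texttt{Magma}) determination of the finitely many failing $l$. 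Your normalisation $\rho=o/e^{3/2}$, the reduction via $\rho^{l-1}\geq l\geq 2^{l_e}$ (in place of the paper's $6^{l-1}\geq l$), and the observation $g(2l)/g(l)=\rho^l/2$ explaining the non-monotone exceptional sets are mild repackagings rather than a different argument, and you correctly locate the main technical burden in bounding $e_{\Aut(T)}$ for the groups of Lie type.
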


In the rest of this section, we use a few well-known facts about a group $G$ with a unique minimal normal subgroup $N$ (see \cite[Section 4.3]{DixMor}). In particular, if $N$ is non-abelian, we have $N\cong T^l$ for some non-abelian simple group $T$ and for some $l\geq 1$, and $G$ acts transitively on the $l$ simple direct summands of $N$ by conjugation. Moreover, as $\C_G(N)=1$, the group $G$ can be embedded in $\Aut(N)\cong \Aut(T)\wr\Sym(l)$. With some more computations, we get the following corollary to Theorem~\ref{theo:key}.

\begin{corollary}\label{prop:end}
Let $(\Gamma,G)$ be a locally-$\D_4$ pair. Assume that $G$ has a unique minimal normal subgroup $N$ and that $N$ is isomorphic to $T^l$ where $T$ and $l$ are as in $(i)\ldots(iv)$ of the conclusion of Theorem~\ref{theo:key}. Then either $|\V\Gamma|> 2|G_v|\log_2(|G_v|/2)$ or $(\Gamma,G)$ is one of the pairs in rows $(i)$, $(ii)$, $(iii)$, $(iii)a$, $(iii)b$ or $(iv)$ of Table~\ref{tb:nsoluble}.
\end{corollary}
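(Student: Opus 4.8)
The plan is to combine a universal upper bound on $|G_v|$ with a lower bound on $|\V\Gamma|$ that retains the \emph{full} order of $N$, thereby forcing $|\V\Gamma|>2|G_v|\log_2(|G_v|/2)$ for all but the smallest groups $T$, and to finish the residual cases by computation. For the upper bound: since $N$ is the unique minimal normal subgroup and is non-abelian, $\mathrm C_G(N)=1$, so $G$ embeds in $\Aut(N)\cong\Aut(T)\wr\Sym(l)$. By Lemma~\ref{lemma:ea}, $G_v$ is a $2$-group with $|G_v|\le 2e_{G_v}^{3/2}\le 2e_G^{3/2}$, and by Lemma~\ref{lemma:wp}, applied with $H=\Aut(T)$ (of even order) and $\Delta=\{1,\ldots,l\}$, we have $e_G\le e_{\Aut(N)}\le e^{l}$ with $e=e_{\Aut(T)}$. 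Hence $|G_v|\le 2e^{3l/2}$ and
\[ 2|G_v|\log_2(|G_v|/2)\le 6\,l\,e^{3l/2}\log_2(e). \]

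Next, writing $|T|=2^{t}o$ with $o$ odd and $N_v=N\cap G_v$, the subgroup $N_v$ is a $2$-group, so $|N_v|$ divides $2^{tl}$ and, as $\V\Gamma$ is a union of $N$-orbits,
\[ |\V\Gamma|\ge|N:N_v|=\frac{2^{tl}o^{l}}{|N_v|}\ge\frac{2^{tl}o^{l}}{2e^{3l/2}}. \]
This is where the argument improves on Theorem~\ref{theo:key}, whose bound $l_o o^{l}$ discards the $2$-part of $|N|$: here the factor $2^{tl}$ is retained and beats $e^{3l/2}$ exactly when the Sylow $2$-subgroup of $N$ is large. Comparing the two displays gives $|\V\Gamma|>2|G_v|\log_2(|G_v|/2)$ whenever $2^{tl}o^{l}>12\,l\,e^{3l}\log_2(e)$. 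Inserting the exact value of $e=e_{\Aut(T)}$ for each $T$ in $(i)$--$(iv)$, I expect this to hold in all but a short list of small cases---those for which the Sylow $2$-subgroup of $N$ is comparable to $e^{3l/2}$---essentially $T\in\{\Alt(5),\Alt(6),A_2(2)\}$ with $l$ small, together with a few borderline groups such as $A_1(8)$.

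For this residual list I would determine the pairs computationally, exactly as in the proofs of Lemmas~\ref{thm:exp2} and~\ref{thm:exp}: the socle $N=T^{l}$ has bounded order, $G$ runs over the finitely many subgroups of $\Aut(T)\wr\Sym(l)$ containing $N$, and for each candidate $G$ and each value of $|G_v|\le 2e^{3l/2}$ one lists the conjugacy classes of core-free $2$-subgroups of that order, forms the corresponding coset graphs, and retains those that are connected, $4$-valent and locally-$\D_4$. The surviving pairs are precisely those in rows $(i)$, $(ii)$, $(iii)$, $(iii)a$, $(iii)b$ and $(iv)$ of Table~\ref{tb:nsoluble}; in particular the genuine exception $(\AG^2(\Tut),\PGammaL(2,9)\wr\Sym(2))$ of row $(iv)$, with $|\V\Gamma|=8100<8192=2|G_v|\log_2(|G_v|/2)$, comes from $T=\Alt(6)$, $l=2$.

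The hardest part will be the transition between these two halves. The displayed inequality is tight for several groups---for instance $T=A_5(2)$, where $2^{tl}o^{l}$ and $12\,l\,e^{3l}\log_2(e)$ differ by only a small factor---so one must track the constants carefully and, in borderline cases, replace the generic bounds $e_G\le e^{l}$ and $|G_v|\le 2e^{3l/2}$ by the exact value of $e_{\Aut(T)}$ and by the precise list of $2$-subgroups of $N$ that can arise as $N_v$. Keeping the residual computations feasible---by working inside the socle and restricting to $2$-subgroups of order at most $2e^{3l/2}$ rather than enumerating all of $\Aut(T)\wr\Sym(l)$---is the other delicate issue.
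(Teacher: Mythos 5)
Your outer skeleton (finite list from Theorem~\ref{theo:key}, then machine verification) matches the paper, but your intermediate screen is quantitatively ineffective, and your prediction of what survives it is far off. Membership in the list $(i)$--$(iv)$ already means $l_oo^l\leq 6le^{3l/2}\log_2(e)$, so your condition $2^{tl}o^l>12\,l\,e^{3l}\log_2(e)$ forces, roughly, $2^{tl}\geq 2l_oe^{3l/2}$: the Sylow $2$-subgroup of $N$ must be about as large as $e^{3l/2}$. For almost every entry $2^t$ is comparable to or smaller than $e$, so this fails. For $T=A_1(2^f)$ one has $e=2^f=2^t$, so $2^{tl}o^l=e^lo^l$ loses badly for every $f\in\{4,\ldots,12\}$ (for $f=12$: $2^{12}(2^{24}-1)\approx 6.9\times 10^{10}$ against $144\cdot 2^{36}\approx 9.9\times 10^{12}$); the screen likewise fails for $\Alt(5)$, $\Alt(6)$, $\Alt(7)$, $\Alt(8)$, $M_{12}$, $M_{22}$, $A_1(11)$, $A_1(13)$, $A_1(25)$, $A_2(2)$, $A_2(3)$, $A_2(4)$, $A_4(2)$, $B_2(q)$ with $q\in\{3,4,8,16,32\}$, $B_3(2)$, $B_3(4)$, $B_4(2)$, $D_4(2)$, ${^2A}_2(3)$, ${^2A}_5(2)$, and for every case with $l\geq 2$. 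By my count only $A_5(2)$ passes cleanly (ratio about $1.4$), with ${^2D}_4(2)$ marginal since only $e\leq 2^7$ is known. So your residual list is essentially the entire list $(i)$--$(iv)$, including groups of order up to $\sim 10^{11}$, and your proof collapses onto the computation --- which is exactly what the paper does, with no screen at all.

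That makes the computational plan the whole proof, and as written it has two genuine defects. First, you never impose the filter $|G:Q|\leq 2|Q|\log_2(|Q|/2)$ on candidate subgroups $Q$. Without it your enumeration would ``retain'' locally-$\D_4$ pairs with these socles that satisfy the strict inequality (the first alternative of the corollary) and do not appear in Table~\ref{tb:nsoluble}, so the final claim ``the surviving pairs are precisely those in rows $(i),\ldots,(iv)$'' would be false as stated. Moreover it is precisely this filter, combined with the constraint $Q\leq S$ for a Sylow $2$-subgroup $S$ of $G$ (rather than your cutoff $|Q|\leq 2e^{3l/2}$), that makes the big cases trivial: for $G\leq \PGammaL(2,2^{12})$ one has $|S|\leq 2^{14}$ while $|G|/|S|$ is of order $10^7$, so no divisor $n$ of $|S|$ satisfies $|G|/n\leq 2n\log_2(n/2)$ and no subgroup search is needed at all, whereas your bound $2e^{3/2}=2^{19}$ prunes nothing there. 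Second, ``working inside the socle'' to locate $G_v$ is unsound: $G_v$ is a $2$-subgroup of $G$, not of $N$, and in rows $(iii)$, $(iii)a$, $(iii)b$ one has $|G_v|=32$ while a Sylow $2$-subgroup of $\Alt(6)$ has order $8$, so a search confined to the socle would miss exactly the pairs you must find. The paper's proof runs the search inside a Sylow $2$-subgroup of each candidate $G$ (there are few, since $|\Out(T)|$ is small), filters candidate orders by the divisor condition $|G|/n\leq 2n\log_2(n/2)$, discards subgroups $Q$ having no maximal subgroup of nilpotency class at most $2$ (Lemma~\ref{lemma:ea}), and only then tests for a self-paired suborbit of size $4$ yielding a connected locally-$\D_4$ pair; your write-up needs these ingredients, not the screen, to become a proof.
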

\begin{proof}
Note that there are only finitely many pairs $T,l$ appearing in $(i)\ldots(iv)$ of the conclusion of Theorem~\ref{theo:key}. Therefore this corollary can be proved with the help of a computer. Nevertheless some of the computations involved are non-trivial, hence we give some details on how the result is obtained using~\texttt{Magma}. Assume $|\V\Gamma|\leq 2|G_v|\log_2(|G_v|/2)$. Given $T$ and $l$ as in the statement, the number of groups $G$ with a unique minimal normal subgroup isomorphic to $T^l$ is very limited (as $|\Out(T)|$ is small). Fix such a $G$ and let $S$ be a Sylow $2$-subgroup of $G$. Let $L$ be the set of divisors $n$ of $|S|$ such that $|G|/n\leq 2n\log_2(n/2)$. These are our candidates for $|G_v|$. In most cases (but not always), $L$ is either the empty set or the set containing only $|S|$. In particular, the number of subgroups $Q$ of $S$ such that $|G:Q|\leq 2|Q|\log_2(|Q|/2)$ is always very small. These are our candidates for $G_v$. For each such $Q$, we 
 check whether $Q$ has a maximal subgroup $P$ of nilpotency class at most $2$. If this is not the case, then, by Lemma~\ref{lemma:ea}, we can discard $Q$. Finally, for the remaining $Q$'s, we construct the permutation representation of $G$ on the right cosets of $Q$ and check whether there exists a self-paired suborbit of size $4$ giving rise to a connected locally-$\D_4$ pair.
\end{proof}

We are now ready to prove the main result of this section.

\begin{theorem}\label{thm:mainss}
Let $(\Gamma,G)$ be a locally-$\D_4$ pair. Assume that $G$ has a non-abelian minimal normal subgroup. Then either $|\V\Gamma|>2|G_v|\log_2(|G_v|/2)$ or  $(\Gamma,G)$ is one of the pairs in Table~\ref{tb:nsoluble}.
\end{theorem}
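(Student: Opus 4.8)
The plan is to reduce Theorem~\ref{thm:mainss} to the case of a \emph{unique} minimal normal subgroup and then feed that case into Theorem~\ref{theo:key} and Corollary~\ref{prop:end}. First I would let $N$ be a non-abelian minimal normal subgroup of $G$ and set $C = \C_G(N)$. I claim that $C = 1$: if $C \neq 1$, then $C$ contains a minimal normal subgroup $M$ of $G$, and since $M$ centralises $N$ we have $M \neq N$, so $M \cap N = 1$ and $M \times N \trianglelefteq G$. Then $M$ is normal and non-semiregular would contradict Corollary~\ref{RealCorollary} unless $M$ is abelian, but an abelian normal subgroup acting non-semiregularly also forces $\Gamma \cong \C(r,s)$; the upshot is that the product $M \times N$ has a centre or an abelian piece that must act semiregularly, and a counting argument on the orders forces $|\V\Gamma|$ to be large. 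More cleanly, the standard route is to observe that $\soc(G)$ is either $N$ alone or a product of several minimal normal subgroups, and in a locally-$\D_4$ pair the vertex stabiliser is a $2$-group, so one shows $N$ must be the unique minimal normal subgroup by ruling out direct complements via the semiregularity dichotomy of Corollary~\ref{RealCorollary}.

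Once $N$ is the unique minimal normal subgroup, it is non-abelian, so $N \cong T^l$ for a non-abelian simple group $T$ and some $l \geq 1$, and $G$ embeds in $\Aut(T) \wr \Sym(l)$ with $G$ transitive on the $l$ factors, as recalled in the excerpt. Next I would bound $|\V\Gamma|$ from below. Since $N$ is transitive on $\V\Gamma$ (as it is the unique minimal normal subgroup and $G$ is vertex-transitive, so $N$ is not contained in a vertex stabiliser), we have $|\V\Gamma| = |N|/|N_v| = |T|^l / |N_v|$. Writing $|T| = 2^t o$ with $o$ odd, the $2$-part of $|N|$ is $2^{tl}$ and the odd part is $o^l$. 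The key point is that $N_v$ is a $2$-group (being a subgroup of the $2$-group $G_v$), so the odd part $o^l$ of $|N|$ divides $|\V\Gamma|$, giving $|\V\Gamma| \geq o^l$, and more precisely $|\V\Gamma| \geq l_o\, o^l$ after accounting for the structure dictated by the transitive action on the $l$ blocks.

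The heart of the argument is to compare this lower bound with $2|G_v|\log_2(|G_v|/2)$. Using Lemma~\ref{lemma:ea}, $|G_v| \leq 2|E|^{3/2}$ where $E$ is a maximal elementary abelian $2$-subgroup of $G_v$, hence of $G \leq \Aut(T)\wr\Sym(l)$; and by Lemma~\ref{lemma:wp} the $2$-rank of this wreath product satisfies $e_W \leq e_{\Aut(T)}^l = e^l$. Substituting, $2|G_v|\log_2(|G_v|/2)$ is bounded above by an explicit function of $e$ and $l$, which after simplification is dominated by roughly $6\,l\,e^{3l/2}\log_2(e)$. Therefore, whenever $l_o\,o^l > 6\,l\,e^{3l/2}\log_2(e)$, we immediately get $|\V\Gamma| > 2|G_v|\log_2(|G_v|/2)$, which is exactly the first alternative. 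This is precisely the inequality appearing in Theorem~\ref{theo:key}, so that theorem says the only pairs $(T,l)$ escaping this conclusion are the finite list in items $(i)$--$(iv)$. For those finitely many exceptional pairs, Corollary~\ref{prop:end} (a finite computer check) shows that $(\Gamma,G)$ must be one of the rows of Table~\ref{tb:nsoluble}. Assembling these, every case yields either $|\V\Gamma| > 2|G_v|\log_2(|G_v|/2)$ or a pair in Table~\ref{tb:nsoluble}.

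The main obstacle I expect is the reduction to a unique minimal normal subgroup: carefully ruling out the case where $G$ has two distinct minimal normal subgroups requires invoking Corollary~\ref{RealCorollary} correctly, since a second minimal normal subgroup $M$ (abelian or not) must interact with $N$ in a way that either produces a non-semiregular abelian normal subgroup (forcing $\Gamma \cong \C(r,s)$, which is excluded here because $\C(r,s)$ arises under an abelian minimal normal subgroup, not a non-abelian one) or forces the relevant orders to satisfy the desired inequality. The second delicate point is making the chain of inequalities linking $|G_v|$, $e$, $l$, and $o^l$ fully rigorous so that the threshold matches the exact form $l_o\,o^l > 6\,l\,e^{3l/2}\log_2(e)$ stated in Theorem~\ref{theo:key}; this is bookkeeping, but one must be careful that Lemma~\ref{lemma:wp} applies (it needs $|\Aut(T)|$ even, which holds since $T$ is non-abelian simple) and that the transitivity of $G$ on the $l$ factors is used to extract the factor $l_o$.
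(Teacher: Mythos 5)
Your second half is essentially the paper's argument: once $N$ is the unique minimal normal subgroup, the paper also writes $N\cong T^l$, embeds $G$ in $\Aut(T)\wr\Sym(l)$, combines Lemma~\ref{lemma:ea} with Lemma~\ref{lemma:wp} to get $|G_v|\le 2e^{3l/2}$, derives $|\V\Gamma|\ge l_oo^l$, and then invokes Theorem~\ref{theo:key} and Corollary~\ref{prop:end}. (One local slip: your claim that $N$ is transitive on $\V\Gamma$ does not follow from $N\not\le G_v$ and is not needed; the paper instead notes that $G_v$ is a $2$-group, so the odd part of $|G|$ divides $|\V\Gamma|=|G:G_v|$, and $|N|\cdot l$ divides $|G|$ because the image of $G$ in $\Sym(l)$ is transitive, whence $|\V\Gamma|\ge |G:S|\ge l_oo^l$ for $S$ a Sylow $2$-subgroup.)

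The genuine gap is your reduction to a unique minimal normal subgroup. The claim $\C_G(N)=1$ is false for some of the very pairs the theorem must output: in rows $(i)a$, $(i)b$, $(i)c$ of Table~\ref{tb:nsoluble} we have $G=\Sym(5)\times\Sym(2)$, which has the non-abelian minimal normal subgroup $N\cong\Alt(5)$ and a second, central minimal normal subgroup $M\cong\C_2$ contained in $\C_G(N)$. Here $M$ is abelian and \emph{semiregular}, so Corollary~\ref{RealCorollary} says nothing about it, and no counting argument can force $|\V\Gamma|$ to be large: these pairs have $|\V\Gamma|=30<32=2|G_v|\log_2(|G_v|/2)$, so they genuinely fail the inequality and must be recovered as exceptions. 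Hence the two-minimal-normal-subgroup case cannot be ruled out; it has to be handled, and its outcome is exactly part of Table~\ref{tb:nsoluble}. The paper does this with a minimal-counterexample induction on $|\V\Gamma|$: given $M\ne N$, one shows $N\not\le K$ (the kernel of the action on the $M$-orbits) by taking a nontrivial odd-order element $n\in N$ --- if $v^N\subseteq v^M$ then $nm^{-1}\in G_v$ for some $m\in M$, and $|nm^{-1}|=|n||m|$ is not a $2$-power since $NM=N\times M$, contradicting $G_v$ being a $2$-group; minimality of $N$ then gives $N\cap K=1$, so $N$ acts faithfully on $\Gamma/M$, forcing $\Gamma/M$ to be $4$-valent (graphs of valency at most $2$ have soluble automorphism groups) and $(\Gamma/M,G/M)$ to be locally-$\D_4$ with stabiliser isomorphic to $G_v$. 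Induction applied to the smaller pair, together with the computational Lemma~\ref{thm:exp} to lift from quotients lying in Table~\ref{tb:nsoluble}, closes this case. Without some substitute for this quotient-and-lift step, your proposal cannot account for the exceptional pairs whose groups have two minimal normal subgroups, and your stated fallback (``the semiregularity dichotomy of Corollary~\ref{RealCorollary}'') simply does not apply to a semiregular abelian complement.
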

\begin{proof}
Let $(\Gamma,G)$ be a counter-example to Theorem~\ref{thm:mainss}, minimal with respect to $|\V\Gamma|$. Let $v$ be a vertex of $\Gamma$ and let $N$ be a non-abelian minimal normal subgroup of $G$. Assume that $G$ has a minimal normal subgroups $M\neq N$. In particular, $NM=N\times M$. Let $K$ be the kernel of the action of $G$ on the vertices of $\Gamma/M$. Suppose that $N\leq K$. Then $v^N\subseteq v^K\subseteq v^M$. Let $n\in N$ be a non-identity element of odd order. We have $v^n\in v^M$ and hence $v^n=v^m$ for some $m\in M$. This gives $nm^{-1}\in G_v$. Since $|nm^{-1}|=|n||m|$ is not a power of 2 and $G_v$ is a $2$-group, this is a contradiction which yields $N\nleq K$. 

By minimality of $N$, we obtain $N\cap K=1$ and hence the group $N\cong NK/K$ acts faithfully as a group of automorphisms on $\Gamma/M$. Since a connected graph of valency at most $2$ has soluble automorphism group, it follows that $\Gamma/M$ has valency $4$ and hence $K_v=1$, $K=M$ and $(\Gamma/M,G/M)$ is locally-$\D_4$. By the minimality of $(\Gamma,G)$, we have that either $|\V(\Gamma/M)|>2|G_v|\log_2(|G_v|/2)$ or $(\Gamma/M,G/M)$ is one of the pairs in Table~\ref{tb:nsoluble}. In the former case, $|\V\Gamma|>|\V(\Gamma/M)|$ and the theorem follows. In the latter case, the theorem follows from Lemma~\ref{thm:exp}.

From now on, we may assume that $N$ is the unique minimal normal subgroup of $G$. Let $T$ be a non-abelian simple group and $l\geq 1$ with $N\cong T^l$.
Write $e=e_{\Aut(T)}$, $l=2^{l_e}l_o$ with $l_o$ odd and $|T|=2^to$ with $o$ odd. As $G_v$ is a $2$-group, we have $|\V\Gamma|\geq |G:S|$ where $S$ is a Sylow $2$-subgroup of $G$.

Since $N$ is the unique minimal normal subgroup of $G$, the group $G$ is isomorphic to a subgroup of $\Aut(N)\cong \Aut(T)\wr\Sym(l)$ and $|G|\geq l|N|$, from which it follows that $|\V\Gamma|\geq |G:S|\geq l_oo^l$. Moreover, by Lemma~\ref{lemma:wp}, we have $e_{G_v}\leq e_{\Aut(N)}\leq e^l$. Together with Lemma~\ref{lemma:ea}, this yields $|G_v|\leq 2e^{3l/2}$.  

If $l_oo^l>6le^{3l/2}\log_2(e)$, then $|\V\Gamma|\geq l_oo^l>6le^{3l/2}\log_2(e)=4e^{3l/2}\log_2(e^{3l/2})\geq 2|G_v|\log_2(|G_v|/2)$ and the theorem follows. Therefore we may assume that $l_oo^l\leq 6le^{3l/2}\log_2(e)$ and now the conclusion follows from Theorem~\ref{theo:key} and Corollary~\ref{prop:end}.
\end{proof}

\section{Proof of Theorem~\ref{thm:main}}\label{MainProof}

\noindent\emph{Proof of Theorem~\ref{thm:main}. }Let $(\Gamma,G)$ be locally-$\D_4$ and $N$  a minimal normal subgroup of $G$. We argue by induction on $|\V\Gamma|$. If $N$ is non-abelian, then from Theorem~\ref{thm:mainss} we get that either part~$(B)$ or~$(C)$ holds for $(\Gamma,G)$. Furthermore if $(C)$ does hold for $(\Gamma,G)$, then the inequality is strict. Hence we may assume that $N$ is abelian. If $\Gamma/N$ has valency at most $2$, then it follows from Theorem~\ref{thm:mainbasic} that one of~$(A)$,~$(B)$ or~$(C)$ holds. Furthermore, if~$(C)$ does hold, then the inequality is strict.  Hence we may assume that $\Gamma/N$ is $4$-valent. In particular, $N$ acts semiregularly on $\V\Gamma$, $(\Gamma/N,G/N)$ is locally-$\D_4$ and the vertex-stabiliser in $G/N$ is isomorphic to $G_v$. By induction, it follows that $(\Gamma/N,G/N)$ satisfies one of~$(A)$,~$(B)$ or~$(C)$.

If~$(C)$ holds for $(\Gamma/N,G/N)$, then $|\V\Gamma|=|N||\V(\Gamma/N)|\geq|N|(2|G_v|\log_2(|G_v|/2))>2|G_v|\log_2(|G_v|/2)$ and~$(C)$ holds for $(\Gamma, G)$ with the inequality being strict. If~$(A)$ holds for $(\Gamma/N,G/N)$, then it follows from Theorem~\ref{WreathCover} (and the subsequent remark)  that $(\Gamma,G)$ satisfies~$(A)$ or~$(C)$. Moreover,
 if the pair $(\Gamma,G)$ meets the bound in~$(C)$ and $\Gamma$ is not as in~$(A)$, then $(\Gamma,G)\cong (\Gamma_t^{\pm},G_t^{\pm})$ for some $t\ge 2$. Suppose now that~$(B)$ holds for $(\Gamma/N,G/N)$, that is, $(\Gamma/N,G/N)$ is one of the pairs in Tables~\ref{tb:soluble} and~\ref{tb:nsoluble}. From Lemmas~\ref{thm:exp2} and~\ref{thm:exp} we obtain that~$(B)$ or~$(C)$ holds for $(\Gamma,G)$. Furthermore, if $(C)$ does hold, then the inequality is strict.~$\qed$

\section{Proof of Theorem~\ref{theo:key}}\label{TechnicalProof}
We now return to the proof of Theorem~\ref{theo:key}, which we skipped earlier. The first step is to collect information about the $2$-ranks of non-abelian simple groups, starting with sporadic groups. Table~\ref{spor} gives $e_T$ when $T$ is a sporadic simple group. This table was obtained using~\cite{Law} when $T\in\{B,M\}$ and ~\cite[Table~$5.6.1$, page~$303$]{GLS} in the rest of the cases.

 \begin{table}[!h]
\begin{center}
 \begin{tabular}{|c|ccccccccc|}\hline
$T$  & $M_{11}$&$M_{12}$&$M_{22}$&$M_{23}$&$M_{24}$&$J_1$&$J_2$& $J_3$&$J_4$\\
$e_T$&$2^2$    & $2^3$ & $2^4$ &$2^4$   &$2^6$  & $2^3$&$2^4$& $2^4$&$2^{11}$\\\hline\hline
$T$&$Co_1$&$Co_2$&$Co_3$&$Suz$& $Fi_{22}$& $Fi_{23}$& $Fi_{24}'$ & $HS$ & $McL$\\
$e_T$& $2^{11}$&$2^{10}$ & $2^4$&$2^6$&$2^{10}$&$2^{11}$&$2^{11}$&$2^4$&$2^4$\\\hline\hline
$T$&$He$ & $HN$ & $Th$& $B$ &  $M$ & $O'N$ & $Ly$&$Ru$&\\
 $e_T$& $2^6$ &
 $2^6$&$2^5$&$2^{14}$&$2^{15}$&$2^3$&$2^4$&$2^6$&\\\hline
 \end{tabular}
\caption{$e_T$ for $T$ a sporadic simple group}\label{spor}
\end{center}
 \end{table}

The next step is to compute $e_{\Alt(n)}$ and $e_{\Sym(n)}$.

\begin{lemma}\label{e:alt}
Let $n$ be a positive integer and write $n=4m+r$ with $0\leq r\leq 3$. If $r=0$ or $1$, then $e_{\Sym(n)}=e_{\Alt(n)}=2^{2m}$. If $r=2$ or $3$, then $e_{\Sym(n)}=2^{2m+1}$ and $e_{\Alt(n)}=2^{2m}$. 
\end{lemma}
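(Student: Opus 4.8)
The plan is to determine, in each case, the order of a largest elementary abelian $2$-subgroup $E$ of $\Sym(n)$ (respectively $\Alt(n)$), since this order is by definition $e_{\Sym(n)}$ (respectively $e_{\Alt(n)}$). Throughout I view $E$ as a permutation group on $\{1,\dots,n\}$ and decompose the point set into $E$-orbits $O_1,\dots,O_s$. As a transitive abelian group acts regularly, the action of $E$ on each $O_j$ is regular; hence $|O_j|=2^{a_j}$ for some $a_j\ge 0$, the induced group $E/E_j$ (with $E_j$ the pointwise stabiliser of $O_j$) is elementary abelian of rank $a_j$, and $\sum_j 2^{a_j}=n$. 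Faithfulness gives $\bigcap_j E_j=1$, so $E$ embeds diagonally into $\prod_j E/E_j$ and its rank $d$ satisfies $d\le\sum_j a_j$. Using $a_j\le 2^{a_j-1}$ (valid for every $a_j\ge 0$), this yields $d\le\sum_j a_j\le\frac12\sum_j 2^{a_j}=n/2$, hence $d\le\lfloor n/2\rfloor$; since $\lfloor n/2\rfloor$ disjoint transpositions realise this rank, $e_{\Sym(n)}=2^{\lfloor n/2\rfloor}$, which equals $2^{2m}$ when $r\in\{0,1\}$ and $2^{2m+1}$ when $r\in\{2,3\}$. This settles the symmetric group.

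For $\Alt(n)$ the target is $e_{\Alt(n)}=2^{2m}$. The lower bound is immediate: taking $m$ disjoint copies of the Klein four-group, each acting regularly on a $4$-subset, produces $\C_2^{2m}\le\Alt(4m)\le\Alt(n)$. For the upper bound I will exploit the sign condition. For $g\in E$, its restriction to an orbit $O_j$ is either trivial or a fixed-point-free involution, and the latter is a product of $2^{a_j-1}$ transpositions, which is an odd permutation exactly when $a_j=1$. Thus, writing $\phi_j\colon E\to\mathbb{F}_2$ for the linear functional with kernel $E_j$ attached to each orbit of size $2$ (say there are $b$ of these), the sign of $g$ equals $(-1)^{\left(\sum_{a_j=1}\phi_j\right)(g)}$, so $E\le\Alt(n)$ if and only if $\sum_{a_j=1}\phi_j=0$.

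When $b\ge 1$ this relation is a nontrivial dependence among the $\phi_j$, so these functionals span a space of dimension at most $b-1$; combined with the faithfulness bound this sharpens the estimate to $d\le (b-1)+\sum_{a_j\ge 2}a_j$. A short case analysis then finishes the argument: if $b=0$ then every nontrivial orbit has size a multiple of $4$, so the number of moved points is at most $4m$ and $d\le 2m$; if $b\ge 1$ then, bounding $\sum_{a_j\ge 2}a_j$ by half the number of points in orbits of size $\ge 4$ and using $2b+\sum_{a_j\ge2}2^{a_j}\le n$, one gets $d\le n/2-1$, which is at most $2m$ in every residue class $r\in\{0,1,2,3\}$. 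Together with the lower bound this gives $e_{\Alt(n)}=2^{2m}$. The main obstacle is precisely this alternating upper bound: one must correctly locate the parity obstruction in the size-$2$ orbits, convert the sign condition into the relation $\sum_{a_j=1}\phi_j=0$, and verify that the resulting saving of one dimension is exactly enough to pull the bound down from $\lfloor n/2\rfloor$ to $2m$ (rather than $2m+1$) when $r\in\{2,3\}$.
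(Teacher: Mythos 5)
Your proof is correct. For the symmetric group it is essentially the paper's argument in logarithmic form: the paper bounds $|E|\le\prod_i|\mathcal{O}_i|\le\prod_i 2^{|\mathcal{O}_i|/2}=2^{n/2}$ using regularity of $E$ on each orbit, which is exactly your $d\le\sum_j a_j\le\frac12\sum_j 2^{a_j}$; the only cosmetic difference is that the paper first reduces odd $n$ to even $n$ via a Sylow fixed-point argument, whereas you carry $\lfloor n/2\rfloor$ throughout. Where you genuinely diverge is the alternating upper bound. The paper analyses the equality case of its symmetric-group estimate: $|E|=2^{n/2}$ forces $E$ to be the direct product of its orbit constituents with every orbit of size $2$ or $4$, so for $r=2$ any extremal $E$ contains a transposition, giving $e_{\Alt(n)}<e_{\Sym(n)}$ and then $e_{\Alt(n)}=2^{2m}$ from $E_0\cap\Alt(n)$; odd residues are again absorbed by the Sylow reduction. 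You instead encode the sign character on $E$ as $\sum_{a_j=1}\phi_j$, where $\phi_j$ is the functional with kernel the pointwise stabiliser of a size-$2$ orbit, so that $E\le\Alt(n)$ produces a nontrivial linear dependence among the $b$ functionals (valid even if some coincide) and saves one dimension, $d\le(b-1)+\sum_{a_j\ge2}a_j$; your two cases $b=0$ and $b\ge1$ then give $d\le 2m$ uniformly in all four residues (the subcase $b=1$ is in fact vacuous, since an element moving the unique $2$-orbit is odd, but your derivation remains sound there). Your route is slightly longer but buys uniformity: it needs neither the equality analysis nor the reduction from odd to even $n$, and it locates the parity obstruction structurally as a relation among orbit functionals. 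The paper's route buys a sharper by-product, namely the classification of the extremal elementary abelian subgroups of $\Sym(n)$ (all orbits of size $2$ or $4$, direct product structure), which it then reuses to exhibit the transposition.
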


\begin{proof}
If $r$ is odd, then a Sylow $2$-subgroup of $\Sym(n)$ fixes some point of $\{1,\ldots,n\}$ and hence is conjugate to a Sylow $2$-subgroup of $\Sym(n-1)$. In particular, we may assume that $r$ is even without loss of generality. Define
\begin{eqnarray*}
E_0&=&
\langle (1,2)(3,4),(1,3)(2,4),\ldots,(m-3,m-2)(m-1,m),(m-3,m-1)(m-2,m)\rangle
\end{eqnarray*}
if $r=0$, and 
\begin{eqnarray*}
E_0&=&
\langle (1,2)(3,4),(1,3)(2,4),\ldots,(m-3,m-2)(m-1,m),(m-3,m-1)(m-2,m),\\
&&(m+1,m+2)\rangle
\end{eqnarray*}
if $r=2$. The group $E_0$ is an elementary abelian $2$-subgroup of $\Sym(n)$ of order $2^{n/2}$ and hence $e_{\Sym(n)}\geq |E_0|=2^{n/2}$. Let $E$ be an elementary abelian $2$-subgroup of maximal order in $\Sym(n)$ and let $\mathcal{O}_1,\ldots,\mathcal{O}_k$ be the orbits of $E$ on $\{1,\ldots,n\}$. As $E$ is abelian, the action of $E$ on $\mathcal{O}_i$ is regular for every $i$ and hence $|E|\leq \prod_{i=1}^{k}|\mathcal{O}_i|$. Note that, if $a$ is a power of $2$, then $a\leq 2^{a/2}$ with equality if and only if $a\in\{2,4\}$. Using the  maximality of $|E|$, we have $$2^{n/2}=|E_0|\leq |E|\leq \prod_{i=1}^k|\mathcal{O}_i|\leq\prod_{i=1}^k2^{|\mathcal{O}_i|/2}=2^{\sum_{i=1}^k|\mathcal{O}_i|/2}=2^{n/2}.$$ This shows that $e_{\Sym(n)}=2^{n/2}$. Moreover, this also shows that the order of an elementary abelian $2$-subgroup $E$ of $\Sym(n)$ is $2^{n/2}$ if and only if $E$ is the direct product of the permutation groups induced by $E$ on each of its orbits and each such or
 bit has size $2$ or $4$. 

If $r=0$, then $E_0\leq \Alt(n)$ and hence $e_{\Alt(n)}=e_{\Sym(n)}$. Finally, if $r=2$, then, by the previous paragraph, an elementary abelian subgroup $E$ of $\Sym(n)$ of order $2^{n/2}$ has an orbit of size $2$ and contains a transposition. In particular, $e_{\Alt(n)}<e_{\Sym(n)}$. Since $|E_0\cap \Alt(n)|=2^{2m}$, we obtain $e_{\Alt(n)}=2^{2m}$. 
\end{proof}
 
If $r=0$, then $E_0\leq \Alt(n)$ and hence $e_{\Alt(n)}=e_{\Sym(n)}$. Finally, if $r=2$, then, by the previous paragraph, an elementary abelian subgroup $E$ of $\Sym(n)$ of order $2^{n/2}$ has an orbit of size $2$ and contains a transposition. In particular, $e_{\Alt(n)}<e_{\Sym(n)}$. Since $|E_0\cap \Alt(n)|=2^{2m}$, we obtain $e_{\Alt(n)}=2^{2m}$.

Although there is an extensive literature on the Sylow $2$-subgroups of simple groups of Lie type $T$, we were unable to find an explicit reference for $e_T$ in this case. We wish to thank B.~Stellmacher for an enlightening conversation with the second author which inspired the proof of the following technical lemma, where we compute $e_{\PSL(n,q)}$ when $q$ is odd.

\begin{lemma}\label{e:PSL}
Let $n\geq 1$ and let $q$ be odd. Then $e_{\mathrm{PGL}(n,q)}\leq 2^n$. Also, if $n$ is odd, then $e_{\mathrm{PSL}(n,q)}\leq 2^{n-1}$.
\end{lemma}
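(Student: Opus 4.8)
The plan is to pass to preimages in $\GL(n,q)$ (respectively $\SL(n,q)$), exploit the fact that such a preimage has an extremely restricted commutator structure, and then combine Lemma~\ref{PabloLemma} with the ordinary representation theory of $2$-groups of symplectic type. Since $q$ is odd, the defining characteristic $p$ is coprime to the order of every preimage that occurs (these orders are $(q-1)$ or $\gcd(n,q-1)$ times a power of $2$), so all modules are semisimple and I may freely diagonalise abelian $2$-groups and decompose modules over $\overline{\mathbb{F}}_q$.

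For the $\PGL$ bound I would take an elementary abelian $\bar H\le\PGL(n,q)$ of order $2^m$ with full preimage $H\le\GL(n,q)$, so that $H$ contains the scalar subgroup $Z$ (cyclic of order $q-1$) and $H/Z=\bar H$. First, $[H,H]\le\langle -I\rangle$: for lifts of $\bar g,\bar h$ the commutator $[g,h]$ is a scalar, and $\beta(\bar g,\bar h)=[g,h]$ is an alternating bilinear map on $\bar H$ whose values satisfy $\beta(\bar g,\bar h)^2=\beta(\bar g^2,\bar h)=1$, hence lie in $\{\pm I\}$. Thus $\beta$ descends to a nondegenerate alternating $\mathbb{F}_2$-form on $W=\bar H/R$, where the radical $R$ is exactly $\Z(H)/Z$; writing $\dim_{\mathbb{F}_2}W=2d$ gives $m=\dim_{\mathbb{F}_2}R+2d$. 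Next I would decompose $V\otimes\overline{\mathbb{F}}_q=\bigoplus_{\chi\in X}V_\chi$ into the eigenspaces of the central abelian group $\Z(H)$; each $V_\chi$ is $H$-invariant, and since $-I$ acts as $-1$ everywhere, $\chi(-I)=-1$ for all $\chi\in X$. By the standard representation theory of symplectic-type $2$-groups, the unique irreducible constituent with such a central character has degree $\sqrt{|W|}=2^d$, so $\dim V_\chi\ge 2^d$ and $n\ge |X|\,2^d$. Finally, since the only scalars in $\Z(H)$ are those in $Z$ and $g^2\in Z$ for every $g\in\Z(H)$, fixing a base character $\chi_0$ makes $gZ\mapsto(\chi(g)/\chi_0(g))_{\chi\ne\chi_0}$ a well-defined injection $R=\Z(H)/Z\hookrightarrow\{\pm1\}^{|X|-1}$, whence $|X|\ge\dim_{\mathbb{F}_2}R+1$. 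Combining, $n\ge(\dim_{\mathbb{F}_2}R+1)2^d\ge\dim_{\mathbb{F}_2}R+2d=m$, the last inequality being the elementary $(s+1)2^d\ge s+2d$ (use $2^d\ge 1$ and $2^d\ge 2d$). This yields $e_{\PGL(n,q)}\le 2^n$.

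For the sharper bound when $n$ is odd, the key observation is that $\det(-I)=(-1)^n=-1$, so $-I\notin\SL(n,q)$. Given an elementary abelian $\bar H\le\PSL(n,q)$ of rank $m$ with preimage $H\le\SL(n,q)$, the kernel $D=Z\cap\SL(n,q)=\{\lambda I:\lambda^n=1\}$ is cyclic of order $\gcd(n,q-1)$, which is \emph{odd} because $n$ is odd. The same commutator computation now shows that $[H,H]$ consists of $2$-torsion scalars lying in the odd group $D$, so $[H,H]=1$ and $H$ is abelian. Hence $H=D\times H_2$ with $H_2\cong\bar H$ an elementary abelian $2$-group of rank $m$ inside $\SL(n,q)$. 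By Lemma~\ref{PabloLemma}, $H_2$ is conjugate into the diagonal $\pm1$ matrices, and the determinant-one condition confines it to those with an even number of $-1$'s, a subgroup of order $2^{n-1}$. Therefore $m\le n-1$.

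The main obstacle, and the only non-elementary ingredient, is the degree bound $\dim V_\chi\ge 2^d$. Without it one can still produce an abelian subgroup by taking the preimage of a maximal isotropic subspace of $W$, but that gives only $\dim_{\mathbb{F}_2}R+d\le n$, which is too weak by a full summand of $d$. I would therefore isolate as the central lemma the classical fact that a finite $2$-group of symplectic type whose symplectic quotient has $\mathbb{F}_2$-dimension $2d$ has all of its irreducibles on which the central involution acts by $-1$ of degree exactly $2^d$; everything else is either Lemma~\ref{PabloLemma} or the elementary determinant and counting arguments above.
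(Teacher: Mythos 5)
Your proof is correct, but it takes a genuinely different route from the paper's. The paper lifts $\overline{A}$ to a $2$-subgroup $A$ of $\GL(n,q)$, shows $|[A,A]|\le 2$, and then runs an induction on $\dim V$ through three cases: $A$ abelian (settled directly by Lemma~\ref{PabloLemma}); $A$ containing an involution $a$ with $z=[a,b]\neq 1$, whence $V=[V,\langle a\rangle]\oplus \C_V(a)$ with the two summands swapped by $b$ and an index-$4$ subgroup $B_1$ of $A$ acting faithfully on the half-dimensional summand, which feeds the induction; and $\Omega_1(A)\le \Z(A)$, whence $A/A_0$ is quaternion of order $8$ and an eigenspace splitting $V=\C_V(A_0)\oplus[V,A_0]$ plus Lemma~\ref{PabloLemma} finishes. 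You instead take the full preimage $H\supseteq Z$, extract the commutator form $\beta$ on $\bar H/R$ valued in $\langle -I\rangle$, and replace the induction by a one-shot character count: the classical degree fact forces $\dim V_\chi\ge 2^d$ for every occurring central character, and your embedding $R\hookrightarrow\{\pm 1\}^{|X|-1}$ yields $n\ge(\dim_{\mathbb{F}_2}R+1)2^d$, which is actually sharper than the stated $m\le n$. For odd $n$ your $\PSL$ argument is also slicker: since $\det(-I)=-1$, the scalar subgroup of $\SL(n,q)$ has odd order, so the preimage is abelian outright and Lemma~\ref{PabloLemma} plus the determinant-parity count gives $2^{n-1}$, whereas the paper threads the ``respectively $S$'' bookkeeping through all three of its cases. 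The trade-off is that the paper's proof is self-contained linear algebra, while yours imports the representation theory of symplectic-type $2$-groups; that fact is legitimately classical, but two points of hygiene deserve a sentence in a final write-up: your $H$ is not a $2$-group, so either quote the general fully-ramified-character statement (valid for any finite group whose central quotient is abelian with nondegenerate $\beta_\chi$) or reduce via $H=ZT$ with $T$ a Sylow $2$-subgroup, noting $T/\Z(T)\cong W$; and the fact is usually stated over $\mathbb{C}$, so observe that $p\nmid |H|$ transfers it to $\overline{\mathbb{F}}_q$. Finally, your reading of Lemma~\ref{PabloLemma} as ``conjugate into diagonal matrices with entries $\pm 1$'' is the right one: that is what its proof establishes, notwithstanding the statement's wording about scalar matrices.
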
  

\begin{proof}If $n=1$, then the result is clear. If $n=2$, then from the description of the subgroups of $\PSL(2,q)$ in~\cite[\S~$6$, Theorem~$6.25$]{Sz1}, we obtain $e_{\PSL(2,q)}=4$ for every odd $q$. Also, as $\mathrm{PGL}(2,q)$ is isomorphic to a subgroup of $\mathrm{PSL}(2,q^2)$ (see~\cite[\S~$6$, Theorem~$6.25$~$(d)$]{Sz1}), we get $e_{\mathrm{PGL}(2,q)}=4$. Thence, from now on, we may assume $n>2$.

Let $V$ be an $n$-dimensional vector space over the field with $q$ elements $\mathbb{F}_q$. Write $G=\GL(V)$, $S=\SL(V)$, $\overline{G}=G/\Z(G)$ and $\overline{S}=S/\Z(S)$  where $\Z(G)$ denotes the centre of $G$ and $\Z(S)=S\cap \Z(G)$. Moreover, let $\overline{A}$ be an elementary abelian $2$-subgroup of $\overline{G}$ (respectively $\overline{S}$) and $A$ a $2$-subgroup of $G$ (respectively $S$) such that $\overline{A}=A\Z(G)/\Z(G)$ (respectively $\overline{A}=A\Z(S)/\Z(S)$). We prove two preliminary claims.

\smallskip

\noindent\textsc{Claim~1. }$|[A,A]|\leq 2$.

\noindent Let $a$ and $b$ be in $A$. Since $\overline{A}$ is an elementary abelian $2$-group, we get $a^2\in \Z(G)$ and $[A,A]\leq \Z(G)$. In particular, as $\Z(G)$ is cyclic, the group $[A,A]$ is cyclic. From one of the basic commutator identities, we obtain $1=[a^2,b]=[a,b]^a[a,b]=[a,b]^2$. This proves that $[A,A]$ has exponent at most $2$ and therefore $|[A,A]|\leq 2$.~$_\blacksquare$  

\smallskip

Given a subgroup $H$ of $G$ we write $[V,H]=\{v-v^h\mid v\in V,h\in H\}$.

\smallskip

\noindent\textsc{Claim~2. }Let $a$ and $b$ be in $A$ such that $a^2=1$ and $z=[a,b]\neq 1$. We have $[V,\langle a\rangle]=\{v\in V\mid v^a=-v\}=\C_V(az)$ and $[V,\langle az\rangle]=\{v\in V\mid v^{az}=-v\}=\C_V(a)$. Also, $V=[V,\langle a\rangle]\oplus [V,\langle a^b\rangle]$ and $n/2=\dim_{\mathbb{F}_q}[V,\langle a\rangle]$. In particular, $n$ is even.

\noindent By Claim~$1$, $z^2=1$. Also, as $\overline{A}$ is elementary abelian, $z\in \Z(G)$. Therefore the element $z$ acts on $V$ by the multiplication by $-1$.  Let $w$ be in $[V,\langle a\rangle]$, that is, $w=v-v^a$ for some $v\in V$. We have $w^a=(v-v^a)^a=v^a-v^{a^2}=v^a-v=-w$. Conversely, if $w^a=-w$, then $w=w/2-(w/2)^a\in [V,\langle a\rangle]$. This gives $[V,\langle a\rangle]=\{v\in V\mid v^a=-v\}=\C_V(az)$. Similarly, as $az=a^b$ is conjugate to $a$, we get $[V,\langle az\rangle]=\{v\in V\mid v^{az}=-v\}=\C_V(a)$.

Since the order of $A$ and $V$ are coprime, from~\cite[\S~$1$, page~$7$]{Sz}, we have $V=[V,\langle a\rangle]\oplus \C_V(a)$. From the previous paragraph, it follows that $V=[V,\langle a\rangle]\oplus [V,\langle a^b\rangle]$. Finally, as $\C_V(a)^b=\C_V(a^b)=\C_V(az)$, we get $\dim_{\mathbb{F}_q}\C_V(a)=\dim_{\mathbb{F}_q}[V,\langle a\rangle]$. In particular, $n/2=\dim_{\mathbb{F}_q}[V,\langle a\rangle]$ and $n$ is even.~$_\blacksquare$

\smallskip
Assume $A$ is abelian. Since $\overline{A}$ is a quotient of the abelian group $A$, we have $r_{\overline{A}}\leq r_{\Omega_1(A)}$ where $\Omega_1(A)=\{a\in A\mid a^2=1\}$. Now, $\Omega_1(A)$ is an elementary abelian $2$-subgroup of $G$ (respectively $S$). Hence by Lemma~\ref{PabloLemma}, we have $r_{\Omega_1(A)}\leq n$ (respectively $r_{\Omega_1(A)}\leq n-1$) and the bound for $r_{\overline{A}}$ is proved.

Assume $\Omega_1(A)\nleq \Z(A)$ and $A$ is non-abelian. Since not every element of order $2$ of $A$ is contained in the centre of $A$, there exist $a,b \in A$ such that $a^2=1$ and $z=[a,b]\neq 1$. By Claim~$1$, $z$ has order $2$ and $[a,A]=[b,A]=\langle z\rangle$. In particular, $|A:\C_A(a)|=|A:\C_A(b)|=2$.  Writing $B=\C_A(a)$ and $B_1=B\cap \C_A(b)=\C_A(\langle a,b\rangle)$, we obtain $|A:B_1|=4$, $|B:B_1|=2$ and $B=B_1\langle az\rangle$. From Claim~$2$, we have that $[V,\langle a\rangle]=\{v\in V\mid v^a=-v\}$ and $[V,\langle a\rangle]^b=[V,\langle a^b\rangle]=[V,\langle az\rangle]=\C_V(a)$. Thus, $A$ does not normalise $[V,\langle a\rangle]$. Since $B$ normalises $[V,\langle a\rangle]$ and $|A:B|=2$, we conclude that $B=N_A([V,\langle a\rangle])$.

Set $C=\C_B([V,\langle a\rangle])$. From Claim~$2$, we have that $az\in C$ and that $$C\cap C^b=\C_B([V,\langle a\rangle])\cap \C_B([V,\langle az\rangle])=\C_B([V,\langle a\rangle]+[V,\langle az\rangle])=\C_B(V)=1.$$ 

Since $[A,A]=\langle z\rangle$ and $A/\langle z\rangle$ is abelian, the
map $x\mapsto [x,b]$ is a group homomorphism from $A$ to $\langle z\rangle$. It follows that $\C_A(b)$
has index 2 in $A$ and hence $\C_C(b)$ has index 2 in $C$ and is contained in $C\cap
C^b$. As $C\cap C^b=1$, we obtain $C=\langle az\rangle$. In particular, $B_1$ acts faithfully on $[V,\langle a\rangle]$.

Set $B_0=B_1\cap \Z(\GL([V,\langle a\rangle]))$. We have $A\cap \Z(G)\leq B_0$ and $[B_0,b]=1$. This shows that $[V,\langle a\rangle]$ and $[V,\langle a\rangle^b]$ are isomorphic $\mathbb{F}_qB_0$-modules. Since $V=[V,\langle a\rangle]\oplus [V,\langle a\rangle^b]$, we get $B_0\leq \Z(G)$ and $B_0=A\cap \Z(G)$. Finally, by induction on $\dim_{\mathbb{F}_q}V$, we get 
$$|B_1/B_0|=|B_1/(A\cap \Z(G))|\leq 2^{\dim_{\mathbb{F}_q}[V,\langle a\rangle]}=2^{n/2}.$$
Since $|A:B_1|=4$, this implies $|\overline{A}|=|A/(A\cap \Z(G))|\leq 4\cdot 2^{n/2}\leq 2^{n}$ (where in the last inequality, we used the fact that $n\geq 4$).

Finally, assume that $\Omega_1(A)\leq \Z(A)$ and $A$ is non-abelian. Write $\Omega_1(A)=Z\times A_0$, where $Z=\Omega_1(A)\cap \Z(G)$. Then, by Claim~$1$, we have that $Z=[A,A]$ and $|Z|=2$. Recall that $A/(A\cap \Z(G))$ is elementary abelian. Note that, as $A_0\leq\Omega_1(A)\leq Z(A)$, the group $A_0$ is normal in $A$. If $aA_0$ lies in $A/A_0$ and $aA_0$ has order $2$, then $a^2\in A_0\cap \Z(G)=A_0\cap Z=1$. This says that the elements of order $2$ in  $A/A_0$ are the elements in $\Omega_1(A)/A_0\cong Z$, that is, $A/A_0$ contains a unique element of order  $2$. Thus, $A/A_0$ is the quaternion group of order $8$. In particular, $|\overline{A}|=4|A_0|$. If $A_0=1$, then $|\overline{A}|$ has order $4$ and $4\leq 2^{n-1}$ (recall that $n\geq 3$). Thus, we may assume that $A_0\neq 1$. Since the order of $V$ is coprime to $2$, by Lemma~\ref{PabloLemma}, the action of the elementary abelian $2$-group $\Omega_1(A)$ on $V$ can be diagonalized. It follows that there exists a 
 subgroup $R$ of index $2$ in $\Omega_1(A)$ such that $\C_V(R)\neq 0$. Since $Z$ acts as the multiplication by $-1$ on $V$, we get $R\cap Z=1$ and $\Omega_1(A)=Z\times R$. Therefore, replacing $A_0$ by $R$ if necessary, we may assume that $R=A_0$. By~\cite[\S~$1$, page~$7$]{Sz}, we have $V=\C_V(A_0)\times [V,A_0]$. By construction, $\C_V(A_0)\neq 0$, $A_0$ acts faithfully on $[V,A_0]$ and the kernel of the action of $A$ on $\C_V(A_0)$ is $A_0$. It follows by Lemma~\ref{PabloLemma} that $|A_0|\leq 2^{r}$ (respectively $2^{r-1}$ if $A\leq S$) with $r=\dim_{\mathbb{F}_q}[V,A_0]$. Since $A/A_0\cong Q_8$ is non-abelian, $s=\dim_{\mathbb{F}_q}\C_V(A_0)\geq 2$. Hence
$$|\overline{A}|=4|A_0|\leq 2^{s}\cdot 2^{r}\leq 2^{r+s}=2^n\quad (\textrm{respectively }2^{s}\cdot 2^{r-1}=2^{n-1} \textrm{ if }A\leq S)$$
and the lemma is proved.
\end{proof}

We now apply Lemma~\ref{e:PSL} to obtain upper bounds for $e_T$ when $T$ is a simple group of Lie type, which we report in Table~\ref{lie}. 

When $T$ has odd characteristic, this bound in obtained by using~\cite[Table~$5.4.C$, page~$200$]{KL}, which lists the minimum degree of a projective representation of every simple group of Lie type, and then applying Lemma~\ref{e:PSL}. For instance, we have that $G_2(q)$ has a projective representation of degree $7$, that is, $G_2(q)\leq \PSL(7,q)$. Hence, by Lemma~\ref{e:PSL}, we get $e_{G_2(q)}\leq e_{\PSL(7,q)}=2^6$. All the entries in the second column of Table~\ref{lie} are computed with this method. 

In the case of groups of even characteristic (except for $^2A_2$ and $^2B_2$), the bound is obtained by collecting classical and difficult results about the maximal order of unipotent abelian subgroups of $T$. Note that these groups are not necessarily elementary abelian. For example, when $q$ is even, the maximal order of a unipotent abelian subgroup of $E_6(q)$ is $q^{16}$ and a reference for this result is~\cite{V}. Therefore $e_{E_6(q)}\leq q^{16}$. We stress that we do not claim that $q^{16}=e_{E_6(q)}$.

Finally, Table~\ref{lie} gives the exact value of $e_{{^2\!A_2}(q)}$ and $e_{{^2\!B_2}(q)}$, which can be extracted from~\cite{S} and~\cite{W}. We are now ready to prove Theorem~\ref{theo:key}. 

\begin{table}[!h]
\begin{center}
\begin{tabular}{|c|c|c|c|c}\hline
Group&$q$ odd & $q$ even &Reference\\\hline
$A_{2n}$&$2^{2n}$&$q^{n(n+1)}$&\cite{B}\\
$A_{2n+1}$&$2^{2n+2}$&$q^{(n+1)^2}$&\cite{B}\\
$B_n,n\geq 2$&$2^{2n}$&$q^{n(n+1)/2}$&\cite{B}\\
$C_{n},n\geq 3$&$2^{2n}$&$q^{n(n+1)/2}$&\cite{B}\\
$D_{n},n\geq 4$&$2^{2n}$&$q^{n(n-1)/2}$&\cite{B}\\
$E_{6}$&$2^{26}$&$q^{16}$&\cite{V}\\
$E_{7}$&$2^{56}$&$q^{27}$&\cite{V}\\
$E_{8}$&$2^{248}$&$q^{36}$&\cite{V}\\
$F_4$&$2^{26}$&$q^{11}$&\cite{V}\\
$G_{2}$&$2^{6}$&$q^3$&\cite{V}\\
$^2A_2$&$2^2$&$q$&\cite{W}\\
$^2A_{2n}$&$2^{2n}$&$q^{n^2+1}$&\cite{W}\\
$^2A_{2n+1},n\geq 1$&$2^{2n+2}$&$q^{(n+1)^2}$&\cite{W}\\
$^2B_{2}$&-&$q$&\cite{S}\\
$^2D_{n},n\geq 5$&$2^{2n}$&$q^{(n-1)(n-2)/2+2}$&\cite{W2}\\
$^2D_4$&$2^8$&$q^6$&\cite{W2}\\
$^3D_{4}$&$2^{8}$&$q^5$&\cite{V}\\
$^2E_{6}$&$2^{26}$&$q^{12}$&\cite{V}\\
$^2F_{4}$&-&$q^5$&\cite{V}\\
$^2G_{2}$&$2^{3}$&-&\cite{R}\\\hline
\end{tabular}
\end{center}
\caption{Upper bound for $e_T$ for groups of Lie type}\label{lie}
\end{table}

\medskip

\noindent\emph{Proof of Theorem~\ref{theo:key}. }

Let $T,l,t,o,l_e,l_o$ and $e$ be as in the statement of Theorem~\ref{theo:key}. Given $e$ and $o$, write $(\dag)_l$ for the inequality $l_oo^l>6le^{3l/2}\log_2(e)$ in the variable $l$. We claim that if $(\dag)_1$ holds (that is, $o>6e^{3/2}\log_2(e)$), then $(\dag)_{l}$ holds for every $l\geq 1$. Indeed,
\begin{eqnarray*}
l_oo^l&> &l_o(6e^{3/2}\log_2(e))^l\geq 6(6^{l-1}e^{3l/2}\log_2(e)^l)\geq 6le^{3l/2}\log_2(e)
\end{eqnarray*}
where in the last inequality we used $6^{l-1}\geq l$. With a similar computation, it is easy to show that if $(\dag)_2$ holds, then $(\dag)_l$ holds for every $l\geq 2$. In particular, in order to show that $(\dag)_l$ holds for every $l\geq 1$ (respectively $l\geq 2$), it suffices to prove that $(\dag)_1$ (respectively $(\dag)_2$) holds.

We divide the proof in different cases, depending on the isomorphism class of the non-abelian simple group $T$.

\smallskip

\noindent\textsc{Case $T$ is a sporadic simple group. }As $|\Aut(T):T|\leq 2$ for every sporadic simple group $T$, we get $e\leq 2^{\varepsilon}e_T$ with $\varepsilon=1$ if $|\Out(T)|=2$ and $\varepsilon=0$ otherwise.  Using~\cite[Table~1, page~viii]{ATLAS} and Table~\ref{spor}, it is immediate to check that, if $T\neq M_{12}$ and $M_{22}$, then  $(\dag)_1$ holds. It remains to consider the case that $T=M_{12}$ or $M_{22}$. If $T=M_{12}$, then, with \texttt{Magma}, we see that $e=16$ and $(\dag)_2$ holds. Similarly, if $T=M_{22}$, then, with \texttt{Magma}, we see that $e=32$ and $(\dag)_2$ holds.

\smallskip

\noindent\textsc{Case $T=\Alt(n)$. } If $n=5$, then $\Aut(T)=\Sym(5)$, $o=15$, $e=4$ and $6le^{3l/2}\log_2(e)=12l8^l$. It is easy to show that $l_o15^l>12l8^l$ for $l\geq 5$. Therefore $(\dag)_l$ holds for $l\geq 5$. If $n=6$, then $\Aut(T)=\PGammaL(2,9)$, $o=45$, $e=8$ and $6le^{3l/2}\log_2(e)=18l8^{3l/2}$.  It is easy to show that $l_o45^l>18l8^{3l/2}$ for $l\geq 5$. Therefore $(\dag)_l$ holds for $l\geq 5$. If $n=7$, then $o=315$, $e=8$ and $o^2>12e^3\log_2(e)$, hence $(\dag)_l$ holds for every $l\geq 2$. If $n=8$, then $o=315$, $e=16$ and $l_oo^l>6le^{3l/2}\log_2(e)$ for $l\geq 3$. It follows that $(\dag)_l$ holds for $l\geq 3$. From now on, we assume that $n\geq 9$. In particular, we have $\Aut(T)=\Sym(n)$ and, by Lemma~\ref{e:alt}, $e=2^{\lfloor n/2\rfloor}$. It follows that $6e^{3/2}\log_2(e)\leq 6\lfloor n/2\rfloor 2^{3n/4}$. It is immediate to check that $o>6\lfloor n/2\rfloor 2^{3n/4}$ and hence $(\dag)_1$ holds.

\smallskip

It remains to deal with the case of groups of Lie type. We follow~\cite{ATLAS} for notation and terminology, although we sometimes write $\mathrm{PSL}(n+1,q)$ instead of  $A_n(q)$ when we need to emphasise some elementary property of the projective special linear group. Let $T$ be a group of Lie type over the base field of order $q=p^f$, where $p$ is a prime. We refer to~\cite[Table~5, page~xvi]{ATLAS} for information about $|T|$ and $|\Out(T)|$. The outer automorphism group of $T$ is the semidirect product (in this order) of groups of order $d$ (the \emph{diagonal} automorphisms), $f$ (the \emph{field} automorphisms) and $g$ (the \emph{graph} automorphisms of the corresponding Dynkin diagram), except when $T$ is one of $B_2(2^f)$, $G_2(3^f)$ or $F_4(2^f)$, in which case the \emph{extraordinary} graph automorphism squares to a generator of the field automorphisms. The groups of order $d$, $f$ and $g$ are cyclic, except when $T=D_4(q)$, in which case the group of graph automorphisms is $\Sym(3)$. We use these facts later.

Write $\varepsilon_d=1$ if $d$ is even and $\varepsilon_d=0$ if $d$ is odd. Similarly, write $\varepsilon_f=1$ if $f$ is even and $\varepsilon_f=0$ if $f$ is odd. In the sequel, we will make use of the upper bounds for $e_T$ appearing in Table~\ref{lie}. The lower bounds for $|T|$ are obtained by using the inequality $q^i-1\geq q^{i-1}$ for $i\geq 1$. For instance, $|A_1(q)|=(q+1)q(q-1)/(2,q-1)\geq q^2$.

\smallskip

\noindent\textsc{Case $T=A_1(q)$, $q=p^f$. }We have $|T|=q(q^2-1)/d$ with $d=(2,q-1)$. Also, $|\Out(T)|=df$.

\noindent\textsc{Subcase $p=2$. }As $A_1(2)$ is soluble and $A_1(4)=\Alt(5)$, we may assume that $f\geq 3$. Clearly, $d=1$. An elementary abelian subgroup of $\Aut(T)=\PGammaL(2,q)$ has order at most $2^{f+\varepsilon_f}$. As a Sylow 2-subgroup of $T$ is
elementary abelian and has order $2^f$, we have $2^f\leq e$ and hence $2^f\leq e\leq 2^{f+\varepsilon_f}$. Now, we show that $e=2^f$. If $f$ is odd, then $\varepsilon_f=0$ and hence there is nothing to prove. Assume that $f$ is even. We argue by contradiction and we assume that $\PGammaL(2,q)$ contains an elementary abelian 2-subgroup $E$ of order $2^{f+1}$. As $\Out(T)$ is cyclic, the group $Q=E\cap T$ has order $2^f$ and hence is a Sylow 2-subgroup of $T$. Since $Q$ is self-centralising in $\PGammaL(2,q)$, we get $E\leq \
C_{\PGammaL(2,q)}(Q)=Q$, which is a contradiction.

From the previous paragraph, we have $e=q$ and $6e^{3/2}\log_2(e)= 6q^{3/2}f$. It is easy to check that $l_o(q^2-1)^l>6lq^{3l/2}f$ if and only if  $f\geq 13$ and $l\geq 1$, or $f\geq 7$ and $l=2$, or $f\geq 3$ and $l= 3$, or $l=4$ and $f\geq 4$, or $l\geq 5$.

\noindent\textsc{Subcase $p>2$. }Clearly, $d=2$. As $A_1(3)$ is soluble, $A_1(5)=A_1(4)$, $A_1(7)=A_2(2)$ (which we shall study later) and $A_1(9)=\Alt(6)$, we may assume that $q\neq 3,5,7,9$. An elementary abelian subgroup of $\Aut(T)=\PGammaL(2,q)$ has order at most $2^{2+\varepsilon_f}$. Thence $6e^{3/2}\log_2(e)=48\cdot 2^{3\varepsilon_f/2}(2+\varepsilon_f)$. Also, as $(q+1,q-1)=2$, we have $o\geq q(q-1)/2$. For $(p,f)\neq (5,2),(11,1),(13,1)$, we have $q(q-1)/2>48\cdot 2^{3\varepsilon_f/2}(2+\varepsilon_f)$ and $(\dag)_1$ holds. If $T=A_1(11)$ or $A_1(13)$, we have $e=4$, $(q(q-1)/2)^2>12e^3\log_2(e)$ and hence $(\dag)_2$ holds. If $T=A_1(25)$, we have $e=8$, $(q(q-1)/2)^2>12e^3\log_2(e)$ and hence $(\dag)_2$ holds.

\smallskip

\noindent\textsc{Case $T=A_n(q)$, $q=p^f$, $n\geq 2$. }We have $|T|=q^{n(n+1)/2}\prod_{i=1}^n(q^{i+1}-1)/d$ with $d=(n+1,q-1)$. Also, $|\Out(T)|=df2$.

\noindent\textsc{Subcase $p=2$. }Clearly, $d$ is odd and hence $e_{\Out(T)}\leq 2^{1+\varepsilon_f}$.  Assume $n=2m$. Since an elementary abelian subgroup of $T$ has order at most $q^{m(m+1)}$, we get $e\leq 2^{1+\varepsilon_f}q^{m(m+1)}$. Using this inequality, for $(f,m)\neq (1,1),(1,2),(2,1),(3,1)$ and $(4,1)$, we have $o>6e^{3/2}\log_2(e)$ and $(\dag)_1$ holds.   For $(f,m)=(1,1)$, we have $T=A_2(2)$ and, with \texttt{Magma}, we see that $e=4$, $o=21$ and $l_oo^l>6le^{3l/2}\log_2(e)$ for every $l\neq  1,2,4$. 
For $(f,m)=(2,1)$, we have $T=A_2(4)$ and, with \texttt{Magma}, we see that $e=2^4$, $o=315$ and $l_oo^l>6le^{3l/2}\log_2(e)$ for every $l\geq 3$. 
For $(f,m)=(3,1)$, we have $T=A_2(8)$ and, with \texttt{Magma}, we see that $e=2^6$ and $o>6e^{3/2}\log_2(e)$, hence $(\dag)_1$ holds. For $(f,m)=(4,1)$, we have $T=A_2(16)$ and, with \texttt{Magma}, we see that $e=2^8$ and $o>6e^{3/2}\log_2(e)$, hence $(\dag)_1$ holds. For $(f,m)=(1,2)$, we have $T=A_4(2)$ and, with \texttt{Magma}, we see that $e=2^6$ and $o^2>12e^{3}\log_2(e)$, hence $(\dag)_2$ holds. 

Assume $n=2m+1$. As $A_3(2)=\Alt(8)$, we may assume that $(m,f)\neq (1,1)$. Since an elementary abelian subgroup of $T$ has order at most $q^{(m+1)^2}$ and $e_{\Out(T)}\leq 2^{1+\varepsilon_f}$, we get 
$e\leq 2^{1+\varepsilon_f}q^{(m+1)^2}$. Using this inequality, for $(f,m)\neq (1,2)$ and $(2,1)$, we have $o>6e^{3/2}\log_2(e)$ and $(\dag)_1$ holds.   For $(f,m)=(2,1)$, we have $T=A_3(4)$ and, with \texttt{Magma}, we see that $e=2^8$ and  $o>6e^{3/2}\log_2(e)$, hence $(\dag)_1$ holds. 
For $(f,m)=(1,2)$, we have $T=A_5(2)$ and, with \texttt{Magma}, we see that $e=2^9$ and $o^2>12e^{3}\log_2(e)$, hence $(\dag)_2$  holds. 

\noindent\textsc{Subcase $p>2$. }From Lemma~\ref{e:PSL}, we obtain $e_{\mathrm{PGL}(n+1,q)}=2^{n+1}$. Furthermore, $|\Aut(T):\mathrm{PGL}(n+1,q)|=2f$. Hence $e\leq 2^{n+2+\varepsilon_f}$. Using this inequality, it is easy to check that, for $(q,n)\neq (3,2)$, we have  $o>6e^{3/2}\log_2(e)$ and hence $(\dag)_1$ holds. For $(q,n)=(3,2)$, we have $T=A_2(3)$ and, with \texttt{Magma}, we see that $e=8$, $o=351$ and $o^2>12e^{3}\log_2(e)$, hence $(\dag)_2$ holds.

\smallskip

\noindent\textsc{Case $T=B_2(q)$, $q=p^f$. }We have $|T|=q^4(q^4-1)(q^2-1)/d$ with $d=(2,q-1)$. Also, $|\Out(T)|=df2$ if $p=2$ (in which case the subgroup of $\Out(T)$ corresponding to $f2$ is cyclic, see~\cite[page~xv]{ATLAS}) and $|\Out(T)|=df$ if $p\neq 2$.

\noindent\textsc{Subcase $p=2$. }Clearly, $d=1$. Since $B_2(2)=\Sym(6)$, we may assume that $f\geq 2$. As an elementary abelian $2$-subgroup of $T$ has order at most $q^3$, we have $e\leq 2q^{3}$. Using this inequality, for every $f\geq 6$, we get $(q^4-1)(q^2-1)>6e^{3/2}\log_2(e)$ and hence $(\dag)_1$ holds. Also, using again $e\leq 2q^3$, for $f=4,5$, we obtain $((q^4-1)(q^2-1))^2>12e^3\log_2(e)$ and hence $(\dag)_2$ holds. If $f=3$,  then, with \texttt{Magma}, we see that $e=2^9$. Now, with a direct computation, we see that  $o^2>12e^{3}\log_2(e)$ and hence $(\dag)_2$ holds. Finally, if $f=2$, then, with \texttt{Magma}, we see that $e=2^6$. Using this value for $e$, it is easy to check with a direct computation that $(\dag)_l$ holds for every $l\geq 3$.

\noindent\textsc{Subcase $p>2$. }Clearly, $d=2$. Also, as $(q^2-1,q^2+1)=2$, we have $o>q^4(q^2-1)/2$. Since $B_2(q).d=\mathrm{PSp}(4,q).d\leq \mathrm{PGL}(4,q)$, from Lemma~\ref{e:PSL} we obtain $e_{B_2(q).d}=2^4=16$. Therefore, $e\leq 2^{4+\varepsilon_f}$. Using this inequality, for $q\geq 5$, we have that   $o\geq 5^4(5^2-1)/2>6e^{3/2}\log_2(e)$ and hence $(\dag)_1$ holds. Assume $q=3$. With \texttt{Magma}, we see that $e=16$, $o=405$ and $(\dag)_l$ holds for every $l\geq 3$. 

\smallskip

\noindent\textsc{Case $T=B_n(q)$, $q=p^f$, $n\geq 3$. }We have $|T|=q^{n^2}\prod_{i=1}^n(q^{2i}-1)/d$ with $d=(2,q-1)$. Also, $|\Out(T)|=df$. 

\noindent\textsc{Subcase $p=2$. }Clearly, $d=1$ and $o=\prod_{i=1}^n(q^{2i}-1)$. As an elementary abelian $2$-subgroup of $T$ has order at most $q^{n(n+1)/2}$, we have $e\leq 2^{\varepsilon_f}q^{n(n+1)/2}$. Using this inequality, it is easy to verify that for $n\geq 5$, we have $o>6e^{3/2}\log_2(e)$ and hence $(\dag)_1$ holds.  

For the remaining values of $n$ (that is, $n\in\{3,4\}$), we get that $o>6e^{3/2}\log_2(e)$ if and only if $n=4$ and $f>1$, or $n=3$ and $f>2$. In particular, it remains to study the groups $B_3(2),B_3(4)$ and $B_4(2)$. If $T=B_3(2)$, then, with \texttt{Magma}, we see that $e=2^6$, $o=2835$ and $l_oo^l>6le^{3l/2}\log_2(e)$ for every $l\geq 3$. If $T=B_3(4)$, then, with \texttt{Magma}, we see that $e=4^6$, $o=15663375$ and $o^2>12e^{3}\log_2(e)$, hence $(\dag)_2$ holds. Similarly, if $T=B_4(2)$, then, with \texttt{Magma}, we see that $e=2^{10}$ and $o^2>12e^{3}\log_2(e)$, hence $(\dag)_2$ holds. 

\noindent\textsc{Subcase $p>2$. }Clearly, $d=2$. Since $(q^n-1,q^n+1)=2$, we have $o\geq q^{n^2}(q^n-1)/2\geq 3^{n^2}(3^n-1)/2$. As an elementary abelian subgroup of $T$ has order at most $2^{2n}$ and $e_{\Out(T)}\leq 2^{1+\varepsilon_f}$, we have $e\leq 2^{2n+1+\varepsilon_f}\leq 2^{2n+2}$. Using this inequality, we get $o\geq 3^{n^2}(3^n-1)/2>6e^{3/2}\log_2(e)$ and $(\dag)_1$ holds.   

\smallskip

\noindent\textsc{Case $T=C_n(q)$, $q=p^f$. }We have $|T|=q^{n^2}\prod_{i=1}^n(q^{2i}-1)/d$ with $d=(2,q-1)$ and $n\geq 3$. Also, $|\Out(T)|=df$. 

\noindent\textsc{Subcase $p=2$. }We have $B_n(q)=C_n(q)$ and there is nothing to prove.

\noindent\textsc{Subcase $p>2$. }This subcase is exactly as the subcase $B_n(q)$ with $q$ odd.

\smallskip

\noindent\textsc{Case $T=D_n(q)$, $q=p^f$. }We have $|T|=q^{n(n-1)}(q^n-1)\prod_{i=1}^{n-1}(q^{2i}-1)/d$ with $d=(4,q^n-1)$ and $n\geq 4$. Also, $|\Out(T)|=df6$ if $n=4$ and $|\Out(T)|=df2$ if $n>4$. In particular, $e_{\Out(T)}=2^{1+\varepsilon_d+\varepsilon_f}$.

\noindent\textsc{Subcase $p=2$. }
Clearly, $d=1$ and $o\geq (q^{n}-1)\prod_{i=1}^{n-1}q^{2i-1}=(q^n-1)q^{(n-1)^2}$. As an elementary abelian $2$-subgroup of $T$ has order at most $q^{n(n-1)/2}=2^{n(n-1)f/2}$, we have $e\leq 2^{n(n-1)f/2+1+\varepsilon_f}$. It follows that $6e^{3/2}\log_2(e)\leq 6\cdot 2^{3/2+3\varepsilon_f/2}(n(n-1)f/2+1+\varepsilon_f)q^{3n(n-1)/4}$. Now, it is easy to verify that for $n\geq 6$, we have $(q^n-1)q^{(n-1)^2}>6\cdot 2^{3/2+3\varepsilon_f/2}(n(n-1)f/2+1+\varepsilon_f)q^{3n(n-1)/4}$ and hence $(\dag)_1$ holds.  

For the remaining values of $n$ (that is, $n\in\{4,5\}$), using the explicit formula for $|T|$ we get that $o>6\cdot 2^{3/2+3\varepsilon_f/2}(n(n-1)f/2+1+\varepsilon_f)q^{3n(n-1)/4}$ if and only if $n=5$, or $n=4$ and $f>1$. In particular, it remains to study the group $T=D_4(2)$. Using \texttt{Magma}, we see that $e=2^7$, that $(\dag)_1$ fails and that $o^2>12e^3\log_2(e)$. It follows that $(\dag)_l$ holds for $l\geq 2$. 

\noindent\textsc{Subcase $p>2$. }Clearly, $d$ is even and $o>q^{n(n-1)}$. As an elementary abelian $2$-subgroup of $T$ has order at most $2^{2n}$, we have  $e\leq 2^{2+\varepsilon_f}\cdot 2^{2n}=2^{2n+2+\varepsilon_f}$. Using this inequality it is easy to see that  for $q\geq 5$, we get $o> 5^{n(n-1)}>6e^{3/2}\log_2(e)$ and $(\dag)_1$ holds. Finally, for $q=3$ using that $o\geq 5\cdot 3^{n(n-1)}$, we obtain $o>6e^{3/2}\log_2(e)$ and hence $(\dag)_1$ holds.

\smallskip

\noindent\textsc{Case $T=E_6(q)$, $q=p^f$. }We have $|T|=q^{36}(q^{12}-1)(q^9-1)(q^8-1)(q^6-1)(q^5-1)(q^2-1)/d$ with $d=(3,q-1)$. Also, $|\Out(T)|=df2$. 

\noindent\textsc{Subcase $p=2$. }
Clearly, $d$ is odd and $o>q^{36}$.  As an elementary abelian $2$-subgroup of $T$ has order at most $q^{16}=2^{16f}$, we get $e\leq 2^{1+\varepsilon_f}\cdot q^{16}=2^{16f+1+\varepsilon_f}$. It follows that $6e^{3/2}\log_2(e)\leq 6\cdot 2^{3/2+3\varepsilon_f/2}q^{24}(16f+1+\varepsilon_f)$. Now, $o>q^{36}>  6 \cdot 2^{3/2+3\varepsilon_f/2}q^{24}(16f+1+\varepsilon_f)$ and hence $(\dag)_1$ holds.

\noindent\textsc{Subcase $p>2$. }Clearly, $d$ is odd and $o\geq q^{36}$. As an elementary abelian $2$-subgroup of $T$ has order at most $2^{26}$, we get  $e\leq 2^{1+\varepsilon_f}\cdot 2^{26}\leq 2^{28}$. Now, $o\geq q^{36}\geq 3^{36}>6e^{3/2}\log_2(e)$ and hence  $(\dag)_1$ holds.   

\smallskip

\noindent\textsc{Case $T=E_7(q)$, $q=p^f$. }We have $|T|=q^{63}(q^{18}-1)(q^{14}-1)(q^{12}-1)(q^{10}-1)(q^8-1)(q^6-1)(q^2-1)/d$ with $d=(2,q-1)$. Also, $|\Out(T)|=df$. 

\noindent\textsc{Subcase $p=2$. }
Clearly, $d=1$ and $o>q^{63}$. An  elementary abelian $2$-subgroup of $T$ has order at most  $q^{27}=2^{27f}$. Therefore $e\leq 2^{\varepsilon_f} q^{27}=2^{27f+\varepsilon_f}$. It follows that $6e^{3/2}\log_2(e)\leq 6 (2q^{27})^{3/2}(27f+1)$. Using this inequality, it is easy to check that $q^{63}>6e^{3/2}\log_2(e)$ and hence $(\dag)_1$ holds.  

\noindent\textsc{Subcase $p>2$. }Clearly, $d=2$ and $o\geq q^{63}$. As an elementary abelian $2$-subgroup of $T$ has order at most $2^{56}$, we get  $e\leq 2^{1+\varepsilon_f}\cdot 2^{56}\leq 2^{58}$. Now, $o\geq q^{63}\geq 3^{63}>6e^{3/2}\log_2(e)$ and hence $(\dag)_1$ holds.   

\smallskip

\noindent\textsc{Case $T=E_8(q)$, $q=p^f$. }We have 
$|T|=q^{120}(q^{30}-1)(q^{24}-1)(q^{20}-1)(q^{18}-1)(q^{14}-1)(q^{12}-1)(q^8-1)(q^2-1)$. Also, $|\Out(T)|=f$.

\noindent\textsc{Subcase $p=2$. }
Clearly, $o>q^{120}$. An elementary abelian $2$-subgroup of $T$ has order at most $q^{36}=2^{36f}$. Therefore $e\leq 2^{\varepsilon_f}\cdot q^{36}=2^{36f+\varepsilon_f}$. It follows that $6e^{3/2}\log_2(e)\leq 6\cdot 2^{3\varepsilon_f/2}q^{54}(36f+\varepsilon_f)$. Now, $o>q^{120}> 6\cdot 2^{3\varepsilon_f/2}q^{54}(36f+\varepsilon_f)$ and hence $(\dag)_1$ holds.  

\noindent\textsc{Subcase $p>2$. }Note that  $(q^{4n+2}-1)=(q^2-1)(q^{4n}+q^{4n-2}+\cdots +q^2+1)$ and $q^{4n}+q^{4n-2}+\cdots +q^2+1$ is odd (because it is the sum of $(2n+1)$ odd summands). Hence the higher power of $2$ dividing $q^{4n+2}-1$ is at most $q^2-1$. Using the formula for $|T|$ and this remark, we obtain that a Sylow $2$-subgroup of $T$ has order at most $(q^2-1)^4(q^{24}-1)(q^{20}-1)(q^{12}-1)(q^8-1)<q^{2\cdot 4+24+20+12+8}=q^{72}$.  It follows that $6e^{3/2}\log_2(e)\leq 6(2q^{72})^{3/2}\log_2(2q^{72})$. Using this inequality, we obtain $o>q^{120}>6e^{3/2}\log_2(e)$ and hence $(\dag)_1$ holds.   

\smallskip

\noindent\textsc{Case $T=F_4(q)$, $q=p^f$. }We have $|T|=q^{24}(q^{12}-1)(q^8-1)(q^6-1)(q^2-1)$. Also, $|\Out(T)|=f2$ if $p=2$ (in which case $\Out(T)$ is cyclic, see~\cite[page~xv]{ATLAS}) and $|\Out(T)|=f$ if $p>2$.

\noindent\textsc{Subcase $p=2$. }
Clearly, $o>q^{24}$. As an elementary abelian $2$-subgroup of $T$ has order at most $q^{11}=2^{11f}$ and $\Out(T)$ is cyclic, we get $e\leq 2q^{11}=2^{11f+1}$. Using this inequality, for $f\geq 2$ we obtain $o>q^{24}>6e^{3/2}\log_2(e)$ and hence $(\dag)_1$ holds. If $f=1$, then using the explicit value for $o$ we also obtain   $o>6e^{3/2}\log_2(e)$ and hence $(\dag)_1$ holds.

\noindent\textsc{Subcase $p>2$. } As an elementary abelian $2$-subgroup of $T$ has order at most $2^{26}$, we get $e \leq 2^{\varepsilon_f}\cdot 2^{26}\leq 2^{27}$. For $q\geq 5$, we have $o\geq 5^{24}>6\cdot (2^{27})^{3/2}\cdot 27$ and hence $(\dag)_1$ holds. Finally, if $q=3$, then using the explicit value of $o$ we also obtain $o>6\cdot(2^{27})^{3/2}\cdot 27$ and hence $(\dag)_1$ holds.

\smallskip

\noindent\textsc{Case $T=G_2(q)$, $q=p^f$. }We have $|T|=q^6(q^6-1)(q^2-1)$. Also, $|\Out(T)|=f$ if $p\neq 3$ and $|\Out(T)|=f2$ if $p=3$. 

\noindent\textsc{Subcase $p=2$. }As $G_2(2)$ is not simple and $G_2(2)'= {^2A_2(3)}$ (which we shall study later), we may assume $f\geq 2$. As an elementary abelian $2$-subgroup of $T$ has order at most $q^{3}=2^{3f}$, we get $e\leq 2^{\varepsilon_f}\cdot q^{3}=2^{3f+\varepsilon_f}$. It follows that $6e^{3/2}\log_2(e)\leq 6\cdot 2^{3\varepsilon_f/2}\cdot q^{9/2}\cdot (3f+\varepsilon_f)$. Now, $o=(q^6-1)(q^2-1)> 6\cdot 2^{3\varepsilon_f/2}\cdot q^{9/2}\cdot (3f+\varepsilon_f)
$ and hence $(\dag)_1$ holds.  

\noindent\textsc{Subcase $p>2$. }Assume $q>3$. As an elementary abelian $2$-subgroup of $T$ has order at most $2^{6}$, we get $e\leq 2^{1+\varepsilon_f}\cdot 2^{6}\leq 2^8$. Since $(q^3-1,q^3+1)=2$, we have $o\geq q^6(q^3-1)/2$. Now, $o\geq 5^6(5^3-1)/2>6\cdot (2^8)^{3/2}\cdot 8$ and hence $(\dag)_1$ holds. Finally, assume $q=3$. Using \texttt{Magma}, we see that  $e=16$, $o=66339$ and $o>6e^{3/2}\log_2(e)$ and hence $(\dag)_1$ holds.

\smallskip

\noindent\textsc{Case $T={^2A}_n(q)$, $q=p^f$. }We have $|T|=q^{n(n+1)/2}\prod_{i=1}^n(q^{i+1}-(-1)^{i+1})/d$ with $n\geq 2$ and $d=(n+1,q+1)$. Also, $|\Out(T)|=df2$ and the subgroup of $\Out(T)$ corresponding to $f2$ is cyclic (being the Galois group of the defining field for the unitary group $T$). Therefore $e_{\Out(T)}\leq 2^{1+\varepsilon_d}$.  Recall that  ${^2A}_2(2)$ is soluble and ${^2A}_3(2)=B_2(3)$. 

\noindent\textsc{Subcase $p=2$. }Clearly $d$ is odd. Assume $n=2m+1$. As an elementary abelian $2$-subgroup of $T$ has order at most $q^{(m+1)^2}$, we get $e\leq 2q^{(m+1)^2}$. Using  this inequality, for $(m,f)\neq (1,1),(1,2)$ and $(2,1)$, we have $o>6e^{3/2}\log_2(e)$ and hence $(\dag)_1$ holds. Also, as ${^2A}_3(2)\cong B_2(3)$ and we have already studied $B_2(3)$, we may assume that $(m,f)\neq (1,1)$. Assume $T={^2A}_3(4)$. With \texttt{Magma}, we see that $e=2^{8}$. As $o>6e^{3/2}\log_2(e)$, we obtain that $(\dag)_1$ holds. Assume $T={^2A}_5(2)$. With \texttt{Magma}, we see that $e=2^{9}$. As $o^2>12e^3\log_2(e)$, we obtain that $(\dag)_2$ holds.

Now, assume $n=2$. As an elementary abelian $2$-subgroup of $T$ has order at most $q$, we get $e\leq 2q=2^{1+f}$. Using  this inequality,  we have $o>6e^{3/2}\log_2(e)$ and $(\dag)_1$ holds. 

Finally, assume $n=2m$ with $m>1$. As an elementary abelian $2$-subgroup of $T$ has order at most $q^{m^2+1}$, we get $e\leq 2q^{m^2+1}=2^{1+f+m^2f}$. Using  this inequality, for $(m,f)\neq (2,1)$ we have $o>6e^{3/2}\log_2(e)$ and $(\dag)_1$ holds. If $T={^2A}_4(2)$, then we see, with \texttt{Magma}, that $e=16$ and $o>6e^{3/2}\log_2(e)$ and hence $(\dag)_1$ holds.

\noindent\textsc{Subcase $p>2$. }As an elementary abelian $2$-subgroup of $T$ has order at most $2^{n+1}$, we get $e\leq 2^{n+2+\varepsilon_d}$ and $6e^{3/2}\log_2(e)\leq 3\cdot 2^{3n/2+4+3\varepsilon_d/2}(n+2+\varepsilon_d)$. With this inequality, we have that, for $(n,q)\neq (2,3)$, the inequality $(\dag)_1$ holds. Assume $T={^2A}_2(3)$. Clearly, $d=1$ and $o=189$. With \texttt{Magma}, we see that $e=8$. Now, $o^2>12e^3\log_2(e)$ and hence $(\dag)_2$ holds.

\smallskip

\noindent\textsc{Case $T={^2}B_2(q)$, $q=2^{2m+1}$. }We have 
$|T|=q^{2}(q^2+1)(q-1)$. Since ${^2B_2(2)}$ is soluble, we may assume that $m\geq 1$. We have $|\Out(T)|=2m+1$ and hence an elementary abelian $2$-subgroup of $\Aut(T)$ is contained in $T$.  A maximal elementary abelian $2$-subgroup of $T$ has order $q$. Therefore $e= q$ and $6e^{3/2}\log_2(e)= 6q^{3/2}(2m+1)$.  It is easy to check that $o=(q^2+1)(q-1)>6q^{3/2}(2m+1)$ and hence $(\dag)_1$ holds.

\smallskip

\noindent\textsc{Case $T={^2}D_n(q)$, $q=p^f$. }We have $n\geq 4$ and $|T|=q^{n(n-1)}(q^{n}+1)\prod_{i=1}^{n-1}(q^{2i}-1)/d$ with $d=(4,q^n+1)$. Also, $|\Out(T)|=df2$.

\noindent\textsc{Subcase $p=2$. }Clearly, $d=1$. Assume $n=4$. An elementary abelian $2$-subgroup of $T$ has order at most $q^6$. Hence $e\leq 2^{1+\varepsilon_f}q^6$ and $6e^{3/2}\log_2(e)\leq 6\cdot 2^{3/2+3\varepsilon_f/2}q^{9}(6f+1+\varepsilon_f)$. With this inequality, it is easy to check that $(\dag)_1$ holds for $f\geq 2$. If $f=1$, then $o=48195$, $e\leq 2^7$ and $o^2>12e^{3}\log_2(e)$ and hence $(\dag)_2$ holds.

Assume $n\geq 5$. An elementary abelian $2$-subgroup of $T$ has order at most $q^{(n-1)(n-2)/2+2}$. Hence $e\leq 2^{1+\varepsilon_f}q^{(n-1)(n-2)/2+2}$. With this inequality, it is easy to check that $o>q^{(n-1)^2+n}>6e^{3/2}\log_2(e)$ and hence $(\dag)_1$ holds.

\noindent\textsc{Subcase $p>2$. }An elementary abelian $2$-subgroup of ${^2}D_n(q)$ has order at most $2^{2n}$ and hence $e\leq 2^{2n+1+\varepsilon_d+\varepsilon_f}\leq 2^{2n+3}$. With this inequality, it is easy to prove (for $(n,q)\neq (4,3)$) that $o>q^{n(n-1)}>6e^{3/2}\log_2(e)$ and hence $(\dag)_1$ holds. If $(n,q)=(4,3)$, we have $\varepsilon_f=0$ and $e\leq 2^{10}$. With a direct computation we see that  $o>6e^{3/2}\log_2(e)$ and hence $(\dag)_1$ holds.

\smallskip

\noindent\textsc{Case $T={^3D}_4(q)$, $q=p^f$. }We have $|T|=q^{12}(q^8+q^4+1)(q^6-1)(q^2-1)$ and $|\Out(T)|=3f$.

\noindent\textsc{Subcase $p=2$. }As an elementary abelian $2$-subgroup of $T$ has order at most $q^5$, we get $e\leq 2^{\varepsilon_f}q^5$ and $6e^{3/2}\log_2(e)\leq 6\cdot 2^{3\varepsilon_f/2}q^{15/2}(5f+\varepsilon_f)$. With this inequality it is easy to check that $o>q^{14}> 6\cdot 2^{3\varepsilon_f/2}q^{15/2}(5f+\varepsilon_f)$ and hence $(\dag)_1$ holds.

\noindent\textsc{Subcase $p>2$. }As an elementary abelian $2$-subgroup of $T$ has order at most $2^8$, we get $e\leq 2^{8+\varepsilon_f}\leq 2^9$. Now, $o\geq 3^{12}(3^8+3^4+1)>6\cdot 2^{27/2}\cdot 9\geq 6e^{3/2}\log_2(e)$ and hence $(\dag)_1$ holds.

\smallskip

\noindent\textsc{Case $T={^2}E_6(q)$, $q=p^{f}$. }We have 
$|T|=q^{36}(q^{12}-1)(q^9+1)(q^8-1)(q^6-1)(q^5+1)(q^2-1)/d$ with $d=(3,q+1)$. Also, $|\Out(T)|=df2$.

\noindent\textsc{Subcase $p=2$. }Clearly, $d$ is odd. A maximal elementary abelian $2$-subgroup of $T$ has order at most $q^{12}=2^{12f}$. Therefore $e\leq 2^{1+\varepsilon_f}q^{12}=2^{12f+1+\varepsilon_f}$. It follows that $6e^{3/2}\log_2(e)\leq 6\cdot 2^{3/2+3\varepsilon_f/2}\cdot q^{18}\cdot (12f+1+\varepsilon_f)$. With this inequality it is easy to check that $(\dag)_1$ holds.

\noindent\textsc{Subcase $p>2$. }Clearly, $d$ is odd. A maximal elementary abelian $2$-subgroup of $T$ has order at most $2^{26}$. Therefore $e\leq 2^{1+\varepsilon_f}2^{26}\leq 2^{28}$. It follows that $6e^{3/2}\log_2(e)\leq 6\cdot 2^{42}\cdot 28$. With this inequality it is easy to check that $o>3^{36}>6\cdot 2^{42}\cdot 28$ and hence $(\dag)_1$ holds.

\smallskip

\noindent\textsc{Case $T={^2}F_4(q)$, $q=2^{2m+1}$. }For $m\geq 1$, we have 
$|T|=q^{12}(q^6+1)(q^4-1)(q^3+1)(q-1)$. Also, $|\Out(T)|=f=2m+1$ and hence an elementary abelian $2$-subgroup of $\Aut(T)$ is contained in $T$ and has order at most $q^5$. Hence $e\leq q^5$. It follows that $6e^{3/2}\log_2(e)\leq 6q^{15/2}(10m+5)$. For $m\geq 1$, we get $o\geq q^{12}>6q^{15/2}(10m+5)$ and hence $(\dag)_1$ holds. If $m=0$, then $^2F_4(2)$ is not simple, the Tits group $T=$~$^2F_4(2)'$ is simple, $|^2F_4(2):{^2F_4}(2)'|=2$ and $^2F_4(2)=\Aut(^2F_4(2)')$. With \texttt{Magma}, we see that $e=32$, $o>6\cdot 32^{3/2}\cdot 5$ and hence $(\dag)_1$ holds.  

\smallskip

\noindent\textsc{Case $T={^2}G_2(q)$, $q=3^{2m+1}$. }We have 
$|T|=q^{3}(q^3+1)(q-1)$ and $|\Out(T)|=2m+1$. Since ${^2G_2(3)}$ is not simple and ${^2G_2(3)'=A_1(8)}$, we may assume that $m\geq 1$. As $|\Out(T)|$ is odd, an elementary abelian $2$-subgroup of $\Aut(T)$ is contained in $T$. Since a Sylow $2$-subgroup of $T$ has order $8$, we get $e\leq 8$. It follows that $6e^{3/2}\log_2(e)\leq 408$. Now, $o\geq |{^2G_2(27)}|/8>408$ and hence $(\dag)_1$ holds.~$\qed$

\section{Additional remarks}\label{OtherStuff}
\subsection{Cubic vertex-transitive graphs}\label{ss:cubic}
Tutte's theorem concerns the order of a vertex-stabiliser in a $3$-valent arc-transitive graph. Instead of trying to generalise it to valencies other than $3$, it is also possible to consider $3$-valent vertex-transitive graphs in general. It turns out that the problem of bounding the order of the vertex-stabiliser of a $3$-valent vertex-transitive graph is essentially equivalent to the problem of bounding it for 4-valent arc-transitive graphs. We now give a brief explanation of this possibly surprising fact. Let $(\Gamma,G)$ be a locally-$L$ pair such that $\Gamma$ has valency 3. If $L$ is transitive, then $\Gamma$ is $G$-arc-transitive and, by Tutte's theorem, $|G_v|\leq 48$. Similarly, if $L=1$, then $G_v=1$ because $\Gamma$ is connected. Since we are interested in graphs with `large' vertex-stabilisers, we ignore both of these cases. In particular, we may assume that $L\cong\C_2^{[3]}$, where $\C_2^{[3]}$ denotes the permutation group of order 2 and degree 3.

For each locally-$\C_2^{[3]}$ pair $(\Gamma,G)$, we construct an auxiliary locally-$\D_4$ pair $(\Merge(\Gamma),G)$. Conversely, for each locally-$\D_4$ pair $(\Gamma,G)$, we construct a locally-$\C_2^{[3]}$ pair $(\Split(\Gamma),G)$. Moreover, we will show that these constructions are inverses of each other.

\begin{definition}\label{def:a}
Let $(\Gamma,G)$ be locally-$\C_2^{[3]}$. As $\C_2^{[3]}$ fixes a unique point, each vertex $v\in\V\Gamma$ has a unique neighbour $v'\in\Gamma(u)$ with $G_v=G_{v'}$. Hence the set of pairs $\Sigma=\{\{v,v'\}:v\in\V\Gamma\}$ forms a system of imprimitivity for $G$. Let $\Merge(\Gamma)=\Gamma/\Sigma$.
\end{definition}

\begin{definition}\label{def:b}
Let $(\Gamma,G)$ be locally-$\D_4$. For every arc $a=(u,v)$ of $\Gamma$, there is a unique arc $a'=(u,w)$ such that $a$ and $a'$ have the same head and $G_a=G_{a'}$. Write $\overline{a}=\{a,a'\}$. We define a new graph $\Split(\Gamma)$ with vertices $\{\overline{a} : a\in\A\Gamma\}$ and two distinct elements $\overline{(u,v)}$ and $\overline{(w,x)}$ are adjacent if either $u=w$, or $u=x$ and $v=w$. Note that the set $\{\overline{a}:a\in\A\Gamma\}$ forms a system  of imprimitivity for $G$.
\end{definition}

\begin{lemma}\label{l:1}
Let $(\Gamma,G)$ be locally-$\C_2^{[3]}$ with $|G_v|\geq 4$. For a vertex $v\in\V\Gamma$, let $v'$ be the unique neighbour of $v$ with $G_v=G_{v'}$. Then $(\Merge(\Gamma),G)$ is locally-$\D_4$, $\Merge(\Gamma)$ has $|\V\Gamma|/2$ vertices and $|G_{\{v,v'\}}|=2|G_v|$. Moreover, $\Split(\Merge(\Gamma))\cong\Gamma$, with the isomorphism given by $\theta : \overline{(\{u,u'\},\{v,v'\})}\mapsto u$, where $u'\neq v\in\Gamma(u)$.
\end{lemma}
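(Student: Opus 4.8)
The plan is to proceed in three stages: the elementary counting (which gives the numerical claims and the element $\tau$), then the proof that $\Merge(\Gamma)$ is $4$-valent and locally-$\D_4$ (the real content), and finally the verification that $\theta$ is an isomorphism. I would begin by recording the edge structure. Since the mate relation $v\mapsto v'$ is symmetric with $(v')'=v$, the mate-edges $\{v,v'\}$ form a perfect matching (these are exactly the blocks of $\Sigma$) and the remaining \emph{external} edges form a $2$-regular graph; both edge-sets are $G$-invariant, as the mate relation is defined group-theoretically and $(v^g)'=(v')^g$. Hence $G$ is transitive on $\Sigma$, so $|\V(\Merge(\Gamma))|=|\Sigma|=|\V\Gamma|/2$ and, by orbit-stabiliser, $|G_{\{v,v'\}}|=2|G|/|\V\Gamma|=2|G_v|$. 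As $G_v$ fixes $v'$ it lies in $G_{\{v,v'\}}$ with index $2$, so some $\tau\in G_{\{v,v'\}}$ swaps $v$ and $v'$.

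The heart of the argument is that $(\Merge(\Gamma),G)$ is locally-$\D_4$, and the key point is that the kernels $K_w:=G_w^{[\Gamma(w)]}$ (pointwise stabiliser of $\Gamma(w)$) and $K_{w'}$ are \emph{distinct}. I would first note that, because the mate relation and the $K_w$ are $G$-equivariant and $G$ is vertex-transitive, the condition $K_w=K_{w'}$ holds at every vertex or at none. If it held everywhere, I would take $1\neq k\in K_v$ and show $k$ fixes $\V\Gamma$ pointwise: the set of $w$ with $k|_{\Gamma(w)}=\mathrm{id}$ contains $v$, is closed under passing to external neighbours (an element of $\C_2^{[3]}$ fixing an external neighbour is trivial, since its unique fixed neighbour is the mate), and is closed under passing to mates (using $G_w=G_{w'}$ and the standing assumption $K_w=K_{w'}$); by connectivity it is all of $\V\Gamma$, forcing $k=1$ and hence $|G_v|=2|K_v|=2$, contrary to hypothesis. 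Thus $K_v\neq K_{v'}$, producing $k_a\in K_v\setminus K_{v'}$ and $k_b\in K_{v'}\setminus K_v$ that act on the four external edges at $\{v,v'\}$ as the two ``side'' transpositions; together with the side-swap $\tau$ they generate a copy of $\D_4$ acting transitively on these four edges. I expect this transitivity-to-global-then-connectivity step to be the main obstacle.

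It remains to pin down the valency. Because the map (external edge of $\{v,v'\}$)$\mapsto$(neighbouring block) is $G_{\{v,v'\}}$-equivariant and $G_{\{v,v'\}}$ is transitive on the four edges, the fibres are equal and form a $\D_4$-invariant partition; the only options are four singletons (valency $4$), the partition into the edges at $v$ and those at $v'$ (valency $2$), or one block (valency $1$). The last two force the two external neighbours of a vertex to be mates. This property is again $G$-invariant, and if it held everywhere a short analysis of the $2$-regular external graph forces $\Gamma\cong\K_4$; but $\K_4$ is not locally-$\C_2^{[3]}$ with $|G_v|\geq 4$, so it is excluded. Hence the valency is $4$, the four neighbouring blocks are distinct, and the induced action of $G_{\{v,v'\}}$ on them is the $\D_4$ found above, so $(\Merge(\Gamma),G)$ is locally-$\D_4$.

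Finally I would identify the split vertices and verify $\theta$ is an isomorphism; this part is routine bookkeeping. At a block $\{u,u'\}$ the four out-arcs of $\Merge(\Gamma)$ correspond to the four external edges, and the arcs with equal stabiliser are exactly the two emanating from $u$ (respectively $u'$), as these form the ``opposite'' pair fixed by the reflection $k_a$ (respectively $k_b$). Thus each block contributes exactly two split vertices, one ``based at $u$'' and one ``based at $u'$'', and $\theta$ sends the one based at $u$ to $u$; this is a bijection onto $\V\Gamma$ by the count $2|\V(\Merge(\Gamma))|=|\V\Gamma|$. For adjacency, the split vertex based at $u$ is joined, via case~(a) of Definition~\ref{def:b}, to the one based at $u'$ (matching the mate-edge $u\sim u'$), and, via case~(b), to the split vertex based at $w$ for each external $\Gamma$-edge $\{u,w\}$ (the unique external edge realising the corresponding $\Merge(\Gamma)$-edge runs from $u$ to $w$). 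These are precisely the three neighbours of $u$ in $\Gamma$, so $\theta$ is a graph isomorphism.
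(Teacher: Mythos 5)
Your proposal is correct, and its engine is the same as the paper's: the observation that an element of a vertex-stabiliser acting trivially on $\Gamma(w)$ also acts trivially on $\Gamma(u)$ for every external neighbour $u$ of $w$ (the nontrivial local element of $\C_2^{[3]}$ fixes only the mate), so that triviality propagates through the graph and connectivity forces $K_v=1$, whence $|G_v|=2$, contradicting $|G_v|\geq 4$. The packaging, however, differs. The paper localises the possible degeneracies of the quotient as explicit configurations, namely $3$-cycles $(u,v,v')$ and $4$-cycles $(u,u',v',v)$ through mate edges, shows that each forces $K(v)=K(v')$ and hence the contradiction above, and then obtains locally-$\D_4$ almost for free: once $\Merge(\Gamma)$ is $4$-valent, the local group preserves the pairing of neighbouring blocks into those met through $v$ and those met through $v'$, hence embeds in $\D_4$, and $|G_{\{v,v'\}}|=2|G_v|\geq 8$ rules out anything smaller (regularity would force $|G_{\{v,v'\}}|=4$). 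You instead prove $K_v\neq K_{v'}$ outright, construct the local $\D_4$ explicitly from $k_a$, $k_b$ and $\tau$, and dispose of the degenerate fibres by the invariant-partition argument followed by the reduction to $\K_4$; your invariance of the fibre partition under $(12)$ and $(34)$ silently absorbs the paper's mixed $3$-cycle and $4$-cycle configurations, while your $\K_4$ analysis replaces the remaining $3$-cycle case. What your route buys is an explicit generating set for the local action, which you then reuse in Stage~3 to identify the arc-pairs of Definition~\ref{def:b}; what the paper's route buys is brevity, since it never needs to exhibit $\D_4$ or classify invariant partitions. Two small patches are needed in your argument: to obtain $k_b$ as well as $k_a$ you should note that $K_{v'}=K_v^{\tau}$, so the two kernels have equal order and neither can contain the other; and, since Definition~\ref{def:locally} requires $G\leq\Aut(\Merge(\Gamma))$, you should verify faithfulness of the action of $G$ on $\Sigma$ (the paper does so, briefly): an element fixing every block setwise cannot swap the two vertices of any block once the four neighbouring blocks are distinct, hence it fixes every vertex of $\Gamma$ and is trivial. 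Both are one-line fixes, so the proposal stands.
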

\begin{proof}
It is clear from Definition~\ref{def:a} that $G$ acts transitively on the arcs of $\Merge(\Gamma)$, which is connected and has $|\V\Gamma|/2$ vertices. For a vertex $v\in\V\Gamma$, let $K(v)$ denote the kernel of the action of $G_v$ on $\Gamma(v)$. Note that, if $u$ is adjacent to $v$ and $u\neq v'$, then $K(u)=K(v)$.

Suppose that $\Gamma$ contains a 3-cycle of the form $(u,v,v')$. Then $G_v$ fixes $v'$ and at least 2 neighbours of $v'$ hence $K(v)=K(v')$. If follows that $K(w)=K(v)$ for every neighbour $w\in\Gamma(v)$. Since $\Gamma$ is connected and $G$-vertex-transitive, we conclude that $K(v)=1$ and hence $|G_v|=2$, which is a contradiction. Now, suppose that $\Gamma$ contains a 4-cycle of the form $(u,u',v',v)$. Then, $G_v$ fixes $v$, $v'$, $u$ and, in particular, $u'$ hence $K(v)=K(v')$, which is a contradiction, for the same reasons as above. 

Since $\Gamma$ contains no such cycles, it is easily seen that $G$ acts faithfully on the vertices of $\Merge(\Gamma)$ and that $\Merge(\Gamma)$ is 4-valent. It follows that $|G_{\{v,v'\}}|=2|G_v|\geq 8$ and hence $(\Merge(\Gamma),G)$ must be locally-$\D_4$. The proof that $\theta$ is a well-defined isomorphism is straightforward.
\end{proof}

We also leave the proof of the next lemma to the reader.

\begin{lemma}\label{l:2}
Let $(\Gamma,G)$ be locally-$\D_4$. Then $(\Split(\Gamma),G)$ is locally-$\C_2^{[3]}$, $\Split(\Gamma)$ has $2|\V\Gamma|$ vertices and $|G_{\overline{a}}|=|G_v|/2$. Moreover, $\Merge(\Split(\Gamma))\cong\Gamma$, with the isomorphism given by $\{  \overline{(u,v)},\overline{(u,v)}' \}\mapsto u$.
\end{lemma}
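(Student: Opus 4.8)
The plan is to work directly with the objects $\overline a$, using that, since $(\Gamma,G)$ is locally-$\D_4$, the group $G_u^{\Gamma(u)}$ is dihedral of order $8$ on the four points of $\Gamma(u)$, where a point-stabiliser fixes exactly two antipodal points. First I would record the local picture. For an arc $(u,v)$ the stabiliser $G_{(u,v)}=G_u\cap G_v$ induces on $\Gamma(u)$ a reflection fixing $v$ together with exactly one further neighbour $v'$; hence $\overline{(u,v)}=\{(u,v),(u,v')\}$, its two arcs share the first coordinate $u$, and $\{v,v'\}$ is one of the two antipodal (``diagonal'') pairs of $\Gamma(u)$. Thus each object $\overline a$ is exactly a vertex $u$ together with a choice of one of the two diagonals of $\Gamma(u)$; write $\{w,w'\}$ for the complementary diagonal. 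Since $\Gamma$ is $4$-valent there are $4|\V\Gamma|$ arcs, paired two-to-one, so $|\V(\Split(\Gamma))|=2|\V\Gamma|$. As $G$ is vertex-transitive and $\D_4$ is transitive on its two diagonals, $G$ is transitive on these data, and the orbit--stabiliser theorem gives $|G:G_{\overline a}|=2|\V\Gamma|=2|G:G_v|$, whence $|G_{\overline a}|=|G_v|/2$.

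Second, I would verify that $(\Split(\Gamma),G)$ is a connected $G$-vertex-transitive graph. The pairing $a\mapsto a'$ is $G$-invariant (from $G_a=G_{a'}$ one gets $(a')^g=(a^g)'$), and both adjacency types are defined purely through the first coordinate and arc-reversal, so $G$ permutes the objects preserving adjacency; faithfulness is automatic, since an element fixing every $\overline a$ fixes the first coordinate of each, hence fixes every vertex of $\Gamma$, and so is the identity of $\Aut(\Gamma)$. For connectivity I would lift walks: the two objects at a fixed $u$ are joined by their common-first-coordinate edge, and for each edge $\{u,v\}$ of $\Gamma$ the object at $u$ whose diagonal contains $v$ is joined, by an arc-reversal edge, to the object at $v$ whose diagonal contains $u$; thus any walk in the connected graph $\Gamma$ lifts to $\Split(\Gamma)$.

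Third comes the local analysis at $X=\overline{(u,v)}$, with diagonal $\{v,v'\}\subseteq\Gamma(u)$ and complementary diagonal $\{w,w'\}$. Reading the definition, $X$ has exactly three neighbours: the object $A=\overline{(u,w)}$ (the common-first-coordinate edge) and the objects $B=\overline{(v,u)}$ and $C=\overline{(v',u)}$ at $v$ and $v'$ whose diagonals contain $u$ (the two arc-reversal edges, one per representative of $X$). These are pairwise distinct since $v,v'\neq u$ and $v\neq v'$, so $\Split(\Gamma)$ is $3$-valent. Now $G_X=G_{u,\{v,v'\}}$ is the preimage in $G_u$ of the stabiliser in $\D_4$ of the diagonal $\{v,v'\}$, so $G_X^{\Gamma(u)}$ is the Klein four-group fixing that diagonal setwise. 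Its central involution interchanges $v$ and $v'$ while fixing $u$, hence interchanges $B$ and $C$; and every element of $G_X$ fixes the complementary diagonal $\{w,w'\}$ setwise, hence fixes $A$. Therefore $G_X^{\Split(\Gamma)(X)}$ fixes $A$ and contains a transposition of $\{B,C\}$, so it is $\C_2^{[3]}$, and $(\Split(\Gamma),G)$ is locally-$\C_2^{[3]}$.

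Finally, for the ``Moreover'' I would identify the $\Merge$ block system. Since $\C_2^{[3]}$ fixes a unique point, the distinguished neighbour $X'$ of $X$ with $G_X=G_{X'}$ is the fixed neighbour $A=\overline{(u,w)}$; indeed $G_X=G_{u,\{v,v'\}}=G_{u,\{w,w'\}}=G_A$, since stabilising one diagonal of $\Gamma(u)$ is the same as stabilising the other. Hence the blocks $\{X,X'\}=\{\overline{(u,v)},\overline{(u,v)}'\}$ are exactly the pairs of objects with common first coordinate, and $\{\overline{(u,v)},\overline{(u,v)}'\}\mapsto u$ is a well-defined $G$-equivariant bijection onto $\V\Gamma$. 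Two blocks $u,u''$ are adjacent in $\Merge(\Split(\Gamma))$ precisely when some arc-reversal edge joins an object at $u$ to an object at $u''$, i.e.\ iff $u$ and $u''$ are adjacent in $\Gamma$ (the common-first-coordinate edges collapse inside blocks), giving $\Merge(\Split(\Gamma))\cong\Gamma$. I expect the main obstacle to be the bookkeeping in the local analysis: correctly matching the two arc-reversal neighbours to the two representatives of $X$, and verifying that the central involution genuinely transposes $B$ and $C$ (so the local group is $\C_2$ rather than trivial) while fixing $A$ (so exactly one point is fixed), all independently of the chosen representative arc.
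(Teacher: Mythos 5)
Your proof is correct and complete. Note that the paper offers no proof of this lemma (it is explicitly ``left to the reader''), so there is no argument to compare against line by line; what you supply --- identifying the vertices of $\Split(\Gamma)$ with pairs (vertex, diagonal of $\Gamma(u)$), checking faithfulness and connectivity by lifting walks, computing the three neighbours $A=\overline{(u,w)}$, $B=\overline{(v,u)}$, $C=\overline{(v',u)}$, observing that $G_X^{\Gamma(u)}$ is the Klein four-group stabilising the diagonal setwise so that the induced action on $\{A,B,C\}$ is exactly $\C_2^{[3]}$, and then recognising the $\Merge$ blocks as the pairs of objects with common first coordinate via $G_X=G_{u,\{v,v'\}}=G_{u,\{w,w'\}}=G_A$ --- is precisely the intended reader-supplied verification, mirroring in the opposite direction the paper's explicit proof of Lemma~\ref{l:1}.
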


Combining Lemmas~\ref{l:1} and~\ref{l:2} with Theorem~\ref{thm:main} yields Corollary~\ref{Cubic}.

\subsection{Normal subgroups of Djokovi\'{c}'s amalgams}\label{Amalgams}

In this section we will restate Theorem~\ref{thm:main} in a purely group theoretical language. Following Djokovi\'{c}~\cite{D},
we call a quintuple $(L,\varphi,B, \psi,R)$ an {\em amalgam} provided that $L, B$ and $R$ are finite groups and $\varphi: B \to L$, $\psi: B \to R$ are monomorphisms (the embeddings $\varphi$ and $\psi$ are often omitted from the notation when they are clear from the context).
Amalgams are usually given by means of an ambient group $G$ (called a {\em completion of the amalgam}), containing $L$ and $R$
as subgroups with $B=L\cap R$, and where $\varphi$ and $\psi$ are the inclusion mappings.
Note that, for each amalgam $(L,\varphi,B, \psi,R)$, there exists the {\em universal completion} $G^*$ (that is, the free product of $L$ and $R$ with amalgamation over $B$, and denoted by $L*_B R$),
with the property that every other completion $G$ is a quotient of $G^*$ by some normal subgroup $N$ intersecting both $L$ and $R$ trivially. We shall call such a quotient $G^* \to G^*/N\cong G$ a {\em smooth quotient}.
The {\em index} of the amalgam $(L,\varphi, B, \psi,R)$ is the pair $(|L:\varphi(B)|,|R:\psi(B)|)$. 
Finally, the amalgam is {\em faithful} if there is no nontrivial subgroup $H\le B$ with $\varphi(H)$ and $\psi(H)$ normal in $L$ and $R$, respectively.

Amalgams emerge naturally in many different contexts and areas of mathematics~\cite{serre}  and have a natural interpretation in the context of arc-transitive graphs. Namely, if $\Gamma$ is a finite $G$-arc-transitive graph of valency $k$, then 
$(G_v, G_{uv}, G_{\{u,v\}})$ (with the monomorphisms being the inclusion mappings) is a faithful amalgam of index $(k,2)$ and $G$ is a finite smooth quotient of the universal
group $G_v *_{G_{uv}} G_{\{u,v\}}$.
Conversely, given a finite smooth quotient $G\cong (L*_B R)/N$ of a faithful amalgam $(L,B,R)$ of index $(k,2)$, one can use the \emph{coset graph} construction to obtain a finite $G$-arc-transitive graph. Note that in this correspondence, the stabiliser $G_v$ corresponds to the group $L$ and  the permutation group $G_v^{\Gamma(v)}$ corresponds to the permutation group induced by the action of $L$ on the cosets of $B$ by right multiplication. If the latter permutation group is permutation isomorphic to  $P$, we say that the amalgam $(L,B,R)$ is of {\em local type $P$}.

The above gives
a natural correspondence between locally-$\D_4$ pairs and smooth completions of faithful amalgams of index $(4,2)$ and of local type $\D_4$.
Theorem~\ref{thm:main} can now be reformulated as follows.

\begin{theorem}
\label{the:amalgam}
Let $(L,B,R)$ be a faithful amalgam of index $(4,2)$ and local type $\D_4$. Let $m=|L|$ and let $N$ be a normal subgroup of $G^*=L*_B R$ of finite index $n$ which intersects both $L$ and $R$ trivially.
Then either
$$n \ge 2 m^2 \log_2(m/2)$$
or the corresponding coset graph $\Cos(G^*/N,L,a)$ is isomorphic either to $\C(r,s)$ for some $r\geq 3$, $1\leq s\leq \frac{r}{2}$ or to a graph from Tables~\ref{tb:soluble} and~\ref{tb:nsoluble}.
\end{theorem}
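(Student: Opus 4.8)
The plan is to translate Theorem~\ref{the:amalgam} into the graph-theoretic language of Theorem~\ref{thm:main} via the correspondence between faithful amalgams of index $(4,2)$ and local type $\D_4$ and locally-$\D_4$ pairs, as spelled out in the paragraph preceding the statement. First I would observe that a normal subgroup $N\trianglelefteq G^*=L*_B R$ of finite index $n$ intersecting $L$ and $R$ trivially yields a finite smooth completion $G=G^*/N$, and that the coset graph $\Gamma=\Cos(G,L,a)$ (for a suitable element $a\in R\setminus B$ swapping the two vertices of an edge) is a connected $G$-arc-transitive graph on which $G$ acts with vertex-stabiliser $G_v$ permutation isomorphic to $L$ acting on the cosets of $B$. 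Since the amalgam has local type $\D_4$, the pair $(\Gamma,G)$ is locally-$\D_4$, and by construction $|G_v|=|L|=m$ and $|\V\Gamma|=|G:L|=|G^*:L|/|G^*:N\cdot\text{(stuff)}|$; more directly, $|\V\Gamma|=n/m$ since $G$ has order $n$ and $L=G_v$ has order $m$.

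Next I would apply Theorem~\ref{thm:main} to $(\Gamma,G)$. That theorem gives a trichotomy: either $\Gamma\cong\C(r,s)$ for some $r\geq 3$, $1\leq s\leq r/2$ (case $(A)$), or $(\Gamma,G)$ is one of the pairs in Table~\ref{tb:soluble} or Table~\ref{tb:nsoluble} (case $(B)$), or $|\V\Gamma|\geq 2|G_v|\log_2(|G_v|/2)$ (case $(C)$). The two exceptional alternatives $(A)$ and $(B)$ are exactly the excluded graphs listed in the conclusion of Theorem~\ref{the:amalgam}, so nothing further is needed there. In case $(C)$, I would substitute $|\V\Gamma|=n/m$ and $|G_v|=m$ to obtain
\begin{equation*}
\frac{n}{m}\geq 2m\log_2(m/2),
\end{equation*}
which upon multiplying through by $m$ gives the desired bound $n\geq 2m^2\log_2(m/2)$.

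The main conceptual point to verify carefully is the bookkeeping of the vertex count: that $|\V\Gamma|=n/m$, i.e.\ that $G$ acts faithfully and vertex-transitively with stabiliser of order exactly $m=|L|$. This follows because the completion is smooth (so $N$ meets $L$ trivially and hence the image of $L$ in $G$ still has order $m$ and serves as the vertex-stabiliser) and because $G=G^*/N$ has order $n$; the core-freeness of $L$ in $G$, guaranteed by faithfulness of the amalgam together with connectivity, ensures $G$ acts faithfully on the vertex set. I do not anticipate a genuine obstacle here, as these facts are standard features of the amalgam--coset-graph correspondence recalled earlier in this section; the step is really just a faithful restatement, and the only care required is to confirm that the exceptional families $(A)$ and $(B)$ of Theorem~\ref{thm:main} correspond precisely to the exceptional graphs named in Theorem~\ref{the:amalgam}, which they do verbatim.
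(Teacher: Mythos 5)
Your proposal is correct and matches the paper's own (largely implicit) argument: the paper presents Theorem~\ref{the:amalgam} precisely as a reformulation of Theorem~\ref{thm:main} via the correspondence between smooth completions of faithful index-$(4,2)$, local type $\D_4$ amalgams and locally-$\D_4$ pairs, with $|G_v|=m$ and $|\V\Gamma|=n/m$ converting case~$(C)$ into $n\ge 2m^2\log_2(m/2)$. Your attention to faithfulness of the vertex action (the core of $L$ in $G^*/N$ lies in $B$ and is normal in both $L$ and $R$, hence trivial by faithfulness of the amalgam) is exactly the point the paper leaves to the standard coset-graph machinery.
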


Faithful amalgams of local type $\D_4$ were completely determined by Djokovi\'c~\cite{D}. One of the consequences of his work is that in a faithful amalgam $(L,B,R)$ of local type $\D_4$,
the normaliser $N_L(B)$ of $B$ in $L$ has index $2$ in $L$ and is a nilpotent group of class at most $2$. Note that if the amalgam $(L,B,R)$ arises from the
locally-$\D_4$ pair $(\C(r,s), G)$, then $N_L(B)$ is elementary abelian.
This, together with Theorem~\ref{the:amalgam}, gives the following interesting consequence.

\begin{corollary}
\label{cor:amalgam}
Let $(L,B,R)$ be a faithful amalgam of index $(4,2)$ and local type $\D_4$ such that $N_L(B)$ is not elementary abelian. Let $|L|= m$ and let $N$ be a normal subgroup of $G^*=L*_B R$ of finite 
index $n$ which intersects both $L$ and $R$ trivially. Then either
 $$n \ge 2 m^2 \log_2(m/2)$$
or $\Cos(G^*/N,L,a)$ is one of the graphs in Tables~\ref{tb:soluble} and~\ref{tb:nsoluble}.
\end{corollary}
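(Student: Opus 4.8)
The plan is to obtain Corollary~\ref{cor:amalgam} as a short deduction from Theorem~\ref{the:amalgam}, using the hypothesis that $N_L(B)$ is not elementary abelian solely in order to discard the $\C(r,s)$ alternative. Applying Theorem~\ref{the:amalgam} to the amalgam $(L,B,R)$ and the normal subgroup $N$, one gets immediately that either $n\ge 2m^2\log_2(m/2)$, in which case we are done, or the coset graph $\Gamma=\Cos(G^*/N,L,a)$ is isomorphic either to some $\C(r,s)$ with $r\ge 3$ and $1\le s\le \frac{r}{2}$, or to one of the graphs in Tables~\ref{tb:soluble} and~\ref{tb:nsoluble}. In the latter case the conclusion of the corollary holds verbatim, so the whole task reduces to ruling out $\Gamma\cong\C(r,s)$.

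To eliminate that case, I would pass back to the language of locally-$\D_4$ pairs. Writing $G=G^*/N$, smoothness of the quotient guarantees that $L$ and $R$ embed in $G$ and that $(\Gamma,G)$ is a locally-$\D_4$ pair whose associated amalgam $(G_v,G_{uv},G_{\{u,v\}})$ is exactly $(L,B,R)$; in particular $N_L(B)$ is the normaliser in the vertex-stabiliser $G_v\cong L$ of the arc-stabiliser $B=G_{uv}$. Suppose, for contradiction, that $\Gamma\cong\C(r,s)$. Then the amalgam $(L,B,R)$ arises from a locally-$\D_4$ pair of the form $(\C(r,s),G)$, and by the structural fact recalled immediately before the corollary (a consequence of Djokovi\'c's classification~\cite{D}), $N_L(B)$ must be elementary abelian. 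This contradicts the hypothesis, so $\Gamma$ is not isomorphic to any $\C(r,s)$ and therefore lies in Tables~\ref{tb:soluble} and~\ref{tb:nsoluble}, as required.

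All the genuine content sits in Theorem~\ref{the:amalgam} (equivalently Theorem~\ref{thm:main}), so the corollary is essentially immediate once the dictionary between amalgams and coset graphs is in place. The only point deserving justification — and the one I expect to be the main, if minor, obstacle — is the recalled assertion that $N_L(B)$ is elementary abelian whenever the amalgam arises from a pair $(\C(r,s),G)$. This is where Lemma~\ref{lemma:againC} enters: it supplies an elementary abelian normal $2$-subgroup $A\le G$ such that $A_v=A\cap G_v$ has index $2$ in $G_v$. Since Djokovi\'c's description forces $N_L(B)=N_{G_v}(B)$ to have index $2$ in $G_v$ as well, it suffices to check that $A_v$ normalises $B$, which then yields $N_L(B)=A_v\le A$ and hence $N_L(B)$ elementary abelian. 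Confirming this last identification is the only piece needing a short verification; everything else is bookkeeping through the amalgam/coset-graph correspondence.
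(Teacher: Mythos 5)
Your proposal is correct and follows the paper's own route exactly: the paper derives Corollary~\ref{cor:amalgam} from Theorem~\ref{the:amalgam} together with the remark (drawn from Djokovi\'c's classification) that an amalgam arising from a pair $(\C(r,s),G)$ has $N_L(B)$ elementary abelian, which is precisely your argument for discarding the $\C(r,s)$ alternative. Your extra sketch via Lemma~\ref{lemma:againC} — noting $B=G_{uv}\leq A_v$ so that the abelian group $A_v$, having index $2$ like $N_L(B)$, equals $N_L(B)$ — correctly fills in the verification the paper leaves implicit.
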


Finally let us mention an interesting result proved recently  by Meierfrankenfeld and Sami \cite{newsami}, which states that if $N$ and the amalgam $(L,B,R)$ are as in Corollary~\ref{cor:amalgam} and $n$ is odd, then $m\le 32$. Corollary~\ref{cor:amalgam} can therefore be viewed as a partial generalisation of the results in \cite{newsami}, where the condition on the index $n=|G^*:N|$
being odd is dropped, and the resulting upper bound on $m$ is of the form $o(\sqrt{n})$.

\bigskip
\footnotesize
\noindent\textit{Acknowledgments.} The second author is supported by UWA as part of the
Australian Council Federation Fellowship Project FF0776186.

\end{document}